\documentclass{birkjour}
%\documentclass[10pt, draft, reqno]{amsart}

% THEOREM Environments ---------------------------------------------------
%\setlength{\textheight}{19.5 cm} \setlength{\textwidth}{12.5 cm}
\newtheorem{theorem}{Theorem}[section]
\newtheorem{lemma}[theorem]{Lemma}

\newtheorem{assumption}[theorem]{Assumption}
\newtheorem{corollary}[theorem]{Corollary}
\newtheorem{definition}[theorem]{Definition}
\newtheorem{example}[theorem]{Example}
\newtheorem{remark}[theorem]{Remark}
\newtheorem{hypothesis}[theorem]{Hypothesis}

% MATH -----------------------------------------------------------
\newcommand{\Tr}{\mathop{\mathrm{Tr}}}
\renewcommand{\d}{\/\mathrm{d}\/}
\def\s{^{\star}}

\def\u{u^{n, \varepsilon}}
\def\ue{u^{\varepsilon}}

\def\ve{v^{\varepsilon}}
\def\we{w^{\varepsilon}}
\def\e{\varepsilon}
\def\sni{\smallskip\noindent}
\def\t{t\wedge\tau_N}
\def\T{T\wedge\tau_N}
% ----------------------------------------------------------------

\begin{document}

\title{Shell Model of Turbulence Perturbed by L\'{e}vy Noise}

\author[Utpal Manna]{Utpal Manna}

\address{%
School of Mathematics\\ 
Indian Institute of Science Education and Research (IISER) Thiruvananthapuram\\
Thiruvananthapuram 695016\\
Kerala, INDIA}

\email{manna.utpal@iisertvm.ac.in}

\author[Manil T. Mohan]{Manil T. Mohan}

\address{%
School of Mathematics\\ 
Indian Institute of Science Education and Research (IISER) Thiruvananthapuram\\
Thiruvananthapuram 695016\\
Kerala, INDIA}

\email{manil@iisertvm.ac.in}

\subjclass{Primary 60H15; Secondary 76D03, 76D06}

\keywords{GOY model, L\'{e}vy processes, Local monotonicity}

\begin{abstract}
In this work we prove the existence and uniqueness of the strong
solution of the shell model of turbulence perturbed by L\'{e}vy
noise. The local monotonicity arguments have been exploited in the
proofs.
\end{abstract}

\maketitle\setcounter{equation}{0}

\maketitle
\section{Introduction}

Stochastic partial differential equations driven by jump processes
gain attention quiet recently due to its important applications in
Mathematical Physics (see, Albeverio et al. \cite{ARW} and
Shlesinger et al. \cite{SZF}) and also in Biomathematics (see West
\cite{We}). Detailed literature on this subject can be found in the
books by Applebaum \cite{Ap}, Ikeda and Watanabe \cite{IW}, and
Peszat and Zabczyk \cite{PZ} (and references therein). In the last
few years several interesting results have been established. To name
a few, de Acosta \cite{Ac1, Ac} first studied the large deviations
for L\'{e}vy processes on Banach spaces and large deviations for
solutions of stochastic differential equations driven by Poisson
measures; Albeverio et al. \cite{Al} proved the existence and
uniqueness for solutions of parabolic SPDEs driven by Poisson random
measures; absolute continuity of the law of the solutions of
parabolic SPDEs driven by Poisson random measures was proved by
Fournier \cite{Fo} using techniques from Malliavin calculus;
Hausenblas \cite{Ha}, Mandrekar and R\"{u}diger \cite{MR} and
R\"{u}diger \cite{Ru} extensively studied the existence and
uniqueness of stochastic integral equations driven by L\'{e}vy noise
and compensated Poisson random measures on separable Banach spaces;
Mueller \cite{Mu} proved the short time existence for the solutions
(which is a minimal solution) of stochastic heat equation with
non-negative L\'{e}vy noise; R\"{o}ckner and Zhang \cite{RZ}
established the existence and uniqueness results for solutions of
stochastic evolution equations driven by L\'{e}vy noise and obtained
the large deviation principles for the additive L\'{e}vy noise case;
Zhao and Chao \cite{ZY} established the global existence and
uniqueness of the strong solution for 2D Navier-Stokes equations on
the torus perturbed by a L\'{e}vy process.

This work deals with an infinite dimensional shell model, a
mathematical turbulence model that received increasing attention in
recent years. Apparently there are only a few rigorous works on
infinite dimensional shell model, namely \cite{CLT}, and \cite{Ba}
one in the deterministic case and the other in the stochastic case
with additive noise respectively. In both of these works a
variational semigroup formulation has been introduced. In \cite{Ma1}
the existence and uniqueness of the strong solutions of the
stochastic shell model of turbulence perturbed by multiplicative
noise have been proved. The authors have also established a large
deviation principle for the solution of the shell model.  Our
present work deals with a more general stochastic model, with
L\'{e}vy noise: the proofs of existence and uniqueness of strong
solutions are considerably more difficult in this case.

In this paper, in the framework of Gelfand triple $V \subset H \cong
H'\subset V'$ (see section 3 for precise definitions), we consider
the following abstract form of the GOY model of turbulence with L\'{e}vy noise:
\begin{align*}
&\d u + \big[\nu Au + B(u, u)\big] \d t = f(t) \d t + \sqrt{\varepsilon}\sigma(t, u) \d W(t) + \varepsilon\int_Zg(u,z)\tilde{N}(dt,dz)\\
& u(0) = u_0,
\end{align*}
The operators $A$ and $B$ are defined in Section 3. $W(t)$ is an $H$-valued Wiener process with positive
symmetric trace class covariance operator $Q$. $\tilde{N}(dt, dz) =
N(dt, dz) - dt
\lambda(dz)$ is a compensated Poisson random measure (cPrm), where \\
$N(dt, dz)$ denotes the Poisson counting measure associated to
Poisson point process $p(t)$ on $Z$ and $\lambda(dz)$ is a
$\sigma$-finite measure on $(Z, \mathcal{B}(Z))$.

The main result of the paper is the following theorem. The spaces
$V, V', H, H_0$, $L_Q(H_0; H), \mathbb{H}^2_{\lambda}([0, T] \times
Z; H), \mathcal{D}([0, T]; H)$ which appear in the statement of this
theorem are defined in section 2.

\begin{theorem} [Main Theorem]
Let us consider the above stochastic GOY model of turbulence driven by L\'{e}vy
processes with the initial
condition $u_0(x)$. Let $u_0$ be $\mathcal{F}_0$ measurable and $E|u_0|^2<\infty$. Let $f \in L^2(0, T;
V')$. Assume that $\sigma$ and $g$ satisfy the following hypotheses
of joint continuity, Lipschitz condition and linear growth:
\begin{itemize}
    \item[(i)]  The function $\sigma \in C([0, T] \times V; L_Q(H_0; H))$, and $g\in \mathbb{H}^2_{\lambda}([0, T] \times Z; H)$.

    \item[(ii)]  For all $t \in (0, T)$, there exists a positive constant $K$ such that for all $u \in H$,
    $$|\sigma(t, u)|^2_{L_Q} + \int_{Z} |g(u, z)|^2_{H}\lambda(dz) \leq K(1 +|u|^2).$$

    \item[(iii)]  For all $t \in (0, T)$,  there exists a positive constant $L$ such that for all $u, v \in H$,
    $$|\sigma(t, u) - \sigma(t, v)|^2_{L_Q} + \int_{Z} |g(u, z)-g(v, z)|^2_{H}\lambda(dz)\leq L|u - v|^2.$$
\end{itemize}
Then there exist a unique adapted process $u(t, x, \omega)$ with
regularity
$$u \in L^2(\Omega; L^2(0, T; V) \cap \mathcal{D}(0, T; H))$$ satisfying the above stochastic GOY model in the weak sense.
\end{theorem}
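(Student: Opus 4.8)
The plan is to combine a Faedo--Galerkin approximation with a local-monotonicity (Minty--Browder) argument, the latter replacing the compactness step that would be delicate for c\`adl\`ag processes driven by jumps. First I would fix the orthonormal basis $\{e_k\}_{k\ge 1}$ of $H$ consisting of eigenvectors of $A$, set $H_n=\mathrm{span}\{e_1,\dots,e_n\}$ with orthogonal projection $P_n$, and project the equation onto $H_n$ to obtain the finite-dimensional It\^o SDE with jumps
\begin{align*}
\d u_n &= P_n\big(f(t)-\nu Au_n-B(u_n,u_n)\big)\,\d t+\sqrt{\e}\,P_n\sigma(t,u_n)\,\d W(t)+\e\int_Z P_n g(u_n,z)\,\tilde N(\d t,\d z),\\
u_n(0) &= P_n u_0.
\end{align*}
On $H_n$ the drift is locally Lipschitz ($B$ is smooth in finite dimensions) and, by (ii)--(iii), the noise coefficients are globally Lipschitz with linear growth, so there is a unique local strong solution; the a priori bounds below show it is in fact global.

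Second, I would derive the uniform energy estimates by applying It\^o's formula to $|u_n(t)|^2$ in the form valid for semimartingales with jumps --- which produces the extra term $\int_0^t\!\int_Z\big(|u_n+\e P_ng(u_n,z)|^2-|u_n|^2-2\e\langle u_n,P_ng(u_n,z)\rangle\big)N(\d s,\d z)$, whose compensator is controlled by $\e^2\int_Z|g(u_n,z)|^2\lambda(\d z)$ via (ii). Using the cancellation $\mathrm{Re}\,\langle B(u_n,u_n),u_n\rangle=0$, $\langle Au_n,u_n\rangle=\|u_n\|^2$, Young's inequality on $\langle f,u_n\rangle$, then taking suprema in $t$, the Burkholder--Davis--Gundy inequality applied to both the Wiener and the compensated-Poisson stochastic integrals, the linear-growth bound (ii), and Gronwall's lemma, I expect
\begin{equation*}
E\sup_{0\le t\le T}|u_n(t)|^2+\nu\,E\int_0^T\|u_n(t)\|^2\,\d t\le C\big(|u_0|,\|f\|_{L^2(0,T;V')},T,\nu,\e\big),
\end{equation*}
with $C$ independent of $n$. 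The shell-model bilinear estimate (bounds of the type $|B(u,v)|\le C|u|\,\|v\|$) together with (i)--(ii) then give uniform bounds for $B(u_n,u_n)$ in $L^2(\Omega;L^2(0,T;V'))$, for $\sigma(\cdot,u_n)$ in $L^2(\Omega;L^2(0,T;L_Q(H_0;H)))$, and for $g(u_n,\cdot)$ in the relevant $\mathbb{H}^2_{\lambda}$ space.

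Third comes the passage to the limit, the heart of the proof. By weak and weak-$\star$ compactness I extract a (non-relabelled) subsequence with $u_n\rightharpoonup u$ in $L^2(\Omega;L^2(0,T;V))$ and weak-$\star$ in $L^2(\Omega;L^\infty(0,T;H))$, $B(u_n,u_n)\rightharpoonup B_0$, $\sigma(\cdot,u_n)\rightharpoonup S_0$, $g(u_n,\cdot)\rightharpoonup g_0$ in the corresponding $L^2$ spaces; passing to the limit in the linear terms shows that $u$ satisfies $\d u+(\nu Au+B_0)\,\d t=f\,\d t+\sqrt{\e}\,S_0\,\d W+\e\int_Z g_0(z)\,\tilde N(\d t,\d z)$, $u(0)=u_0$, in $V'$, with a $\mathcal{D}(0,T;H)$ modification by a standard regularization argument. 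To identify $B_0=B(u,u)$ I would use the local-monotonicity inequality
\begin{equation*}
2\big\langle(-\nu Au-B(u,u))-(-\nu Av-B(v,v)),\,u-v\big\rangle+\nu\|u-v\|^2\le\frac{C}{\nu}\,|v|^2\,|u-v|^2,\qquad u,v\in V,
\end{equation*}
which follows from $\mathrm{Re}\,\langle B(w,v),w\rangle=-\mathrm{Re}\,\langle B(w,w),v\rangle$ (polarization of the cancellation $\mathrm{Re}\,\langle B(u,v),v\rangle=0$), the bound $|B(w,w)|\le C|w|\,\|w\|$, and Young's inequality. For a bounded adapted $V$-valued test process $\phi$, set $r(t)=\int_0^t\big(\frac{C}{\nu}|\phi(s)|^2+\e L\big)\,\d s$, so that the weight also absorbs the Lipschitz constant of the noise from (iii); applying It\^o's formula to $e^{-r(t)}|u_n(t)-\phi(t)|^2$ and to $e^{-r(t)}|u(t)-\phi(t)|^2$, taking expectations, and combining the two identities via weak lower semicontinuity and the convergence of the stochastic terms, then putting $\phi=u-\lambda\psi$ with $\psi$ bounded adapted $V$-valued, dividing by $\lambda>0$ and letting $\lambda\downarrow0$, yields $E\int_0^T e^{-r(t)}\langle B_0-B(u,u),\psi\rangle\,\d t\le0$ for every such $\psi$; replacing $\psi$ by $-\psi$ gives $B_0=B(u,u)$. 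Once this holds, $u$ itself is an admissible test process --- the weight $e^{-\int_0^t(\frac{C}{\nu}|u|^2+\e L)\,\d s}$ being a.s.\ positive because $u\in L^\infty(0,T;H)$ a.s.\ --- and testing with $\phi=u$ forces $S_0=\sigma(\cdot,u)$ and $g_0=g(u,\cdot)$ (equivalently, the argument upgrades $u_n\to u$ to strong convergence in $L^2(\Omega;L^2(0,T;H))$, after which (iii) identifies the noise limits). Uniqueness follows from the same inequality applied to the difference $w=u^{(1)}-u^{(2)}$ of two solutions: with $r(t)=\int_0^t(\frac{C}{\nu}|u^{(2)}(s)|^2+\e L)\,\d s$, after localization by $\tau_N$ and use of the a priori bounds, $E[e^{-r(t)}|w(t)|^2]\le0$, and since $u^{(2)}\in L^\infty(0,T;H)$ a.s.\ we have $r(t)<\infty$ a.s., hence $w\equiv0$.

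The hard part will be the rigorous bookkeeping of the jump terms throughout: establishing the precise It\^o formula for $|u_n(t)|^2$ for the c\`adl\`ag Galerkin processes and controlling its jump part by (ii); the Burkholder--Davis--Gundy estimate for the compensated-Poisson integral needed for the $\sup_t$ bound; and, in the Minty step, verifying that the weak limits of $\sqrt{\e}\int_0^{\cdot}\sigma(\cdot,u_n)\,\d W$ and $\e\int_0^{\cdot}\!\int_Z g(u_n,z)\,\tilde N(\d s,\d z)$ are the stochastic integrals of $S_0$ and $g_0$, which is where hypothesis (iii) and the strong $H$-convergence extracted from the monotonicity argument enter.
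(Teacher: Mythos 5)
Your proposal follows essentially the same architecture as the paper's proof: Galerkin projection onto $H_n$, uniform energy estimates via the jump It\^o formula, stopping times, Burkholder--Davis--Gundy and Gronwall, weak/weak-star limits by Banach--Alaoglu, identification of the nonlinear drift by a Minty--Browder argument with an exponential weight $e^{-r(t)}$, identification of the noise coefficients by taking the test process equal to the limit $u$ itself, and uniqueness by the same weighted monotonicity inequality. The one genuine difference is the form of the local monotonicity and how the noise enters it. The paper (Lemma 3.7) works in a closed $l^4$-ball, estimates $(B(w,v),w)$ by $\|v\|_{l^4}\,|w|^{1/2}\|w\|^{3/2}$ using $H\subset l^4$, takes the weight $r(t)=\frac{2}{\nu^3}\int_0^t\|v^\e(s)\|_{l^4}^4\,\d s$, and absorbs the Lipschitz terms of $\sigma$ and $g$ into the leftover $\frac{\nu}{2}\|w\|^2$ by imposing the smallness condition $0<\e<\nu/(2L)$ (which is why the paper's existence theorem carries the hypothesis $0<\e<\nu/C$, even though the Main Theorem as stated does not). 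You instead use the polarization identity $(B(w,v),w)=-(B(w,w),v)$ together with $|B(w,w)|\le C|w|\,\|w\|$ to get a weight proportional to $|v|^2$, and you put the extra term $\e L$ directly into $\dot r$, so the Lipschitz constants of the noise are absorbed by the weight rather than by a restriction on $\e$. Your variant is thus slightly cleaner and more general (no smallness of $\e$ is needed for either existence or uniqueness), at the modest cost of a weight depending on the test process through $|\phi|^2$ rather than through a fixed $l^4$-radius; both weights are a.s.\ finite for the processes considered, so both arguments go through. The remaining delicate points you flag (the c\`adl\`ag modification of the limit, the lower-semicontinuity step, and the identification of the limits of the stochastic integrals) are treated at the same level of detail in the paper, so your outline does not fall short of its standard.
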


The construction of the paper is as follows. In the next Section, we
give definitions, basic properties and It\^{o}'s formula for the
L\'{e}vy noise. In Section $3$, we describe the functional setting
and formulate the abstract stochastic shell model (namely GOY model)
when the noise coefficients are small. In section $4$, we prove
certain a-priori energy estimates with exponential weight. These
estimates together with the local monotonicity property of the sum
of the linear and non linear operators play a fundamental role to
prove the existence and uniqueness of the strong solution. The main result of this paper
as given in the above theorem has been proved in section $4$. 

\section{Preliminaries}

In this section definitions and basic properties of Hilbert space
valued Wiener processes and L\'{e}vy processes have been presented.
Most of the materials in this section have been borrowed from the
books by Da Prato and Zabczyk \cite{DaZ}, Applebaum \cite{Ap}, and
Peszat and Zabczyk \cite{PZ}. Interested readers may look into these
books for extensive study on the subject.
\begin{definition}
Let $H$ be a Hilbert space. A stochastic process $\{W(t)\}_{0\leq
t\leq T}$ is said to be an $H$-valued $\mathcal{F}_t$-adapted Wiener
process with covariance operator $Q$ if
\begin{enumerate}
\item [(i)] For each non-zero $h\in H$, $|Q^{1/2}h|^{-1} (W(t), h)$ is a standard one-dimensional Wiener process,
\item [(ii)] For any $h\in H, (W(t), h)$ is a martingale adapted to $\mathcal{F}_t$.
\end{enumerate}
\end{definition}
If $W$ is a an $H$-valued Wiener process with covariance operator
$Q$ with $\Tr Q < \infty$, then $W$ is a Gaussian process on $H$ and
$$ E(W(t)) = 0,\quad \text{Cov}\ (W(t)) = tQ, \quad t\geq 0.$$ Let
$H_0 = Q^{1/2}H.$ Then $H_0$ is a Hilbert space equipped with the
inner product $(\cdot, \cdot)_0$,
$$(u, v)_0 = \left(Q^{-1/2}u, Q^{-1/2}v\right),\ \forall u, v\in H_0,$$
where $Q^{-1/2}$ is the pseudo-inverse of $Q^{1/2}$. Since $Q$ is a
trace class operator, the imbedding of $H_0$ in $H$ is
Hilbert-Schmidt.

Let $L_Q$ denote the space of linear operators $S$ such that $S
Q^{1/2}$ is a Hilbert-Schmidt operator from $H$ to $H$. Define the
norm on the space $\mathrm{L}_Q$ by $|S|_{\mathrm{L}_Q}^2 =
\Tr(SQS^*)$.

\begin{definition}
Let $I=[a,b]$ be an interval in $\mathbb{R}^+.$ A mapping $g : I
\rightarrow \mathbb{R}^d$ is said to be c\`{a}dl\`{a}g if, for all
$t\in[a,b]$, $g$ has a left limit at $t$ and $g$ is right continuous
at $t$, i.e.,
\begin{itemize}
  \item [(i)] for all sequences $(t_n,n\in \mathbb{N})$ in $(a,b)$ with each $t_n<t$ and $lim_{n\to\infty}t_n = t$ we have that
  $lim_{n\to\infty}g(t_n)$ exists;
  \item [(ii)] for all sequences $(t_n,n\in \mathbb{N})$ in $(a,b)$ with each $t_n\geq t$ and $lim_{n\to\infty}t_n = t$ we have that
  $lim_{n\to\infty}g(t_n) = g(t);$
  \item [(iii)] for the end-points we stipulate that $g$ is right continuous at $a$ and has left limit at $b$.
\end{itemize}
\end{definition}
\begin{definition}
Let $(\Omega, \mathcal{F}, \mathcal{F}_t, P)$  be a filtered
probability space, and $E$ be a Banach space. A process
$(X_t)_{t\geq 0}$ with state space $(E, \mathcal{B}(E))$ is called a
L\'{e}vy process if
\begin{itemize}
\item [(i)] $(X_t)_{t\geq 0}$ is adapted to $(\mathcal{F}_t)_{t\geq
0}$,
\item [(ii)] $X_0 = 0$ a.s.,
\item [(iii)] $(X_t)_{t\geq 0}$ has increments of the past, i.e.
$X_t-X_s$ is independent of $\mathcal{F}_s$ if $0\leq s< t$,
\item [(iv)] $(X_t)_{t\geq 0}$ is stochastically continuous, i.e.
$\forall \e >0, \lim_{s\rightarrow t} P(|X_s - X_t| > \e) = 0$,
\item [(v)] $(X_t)_{t\geq 0}$ is c\`{a}dl\`{a}g,
\item [(vi)] $(X_t)_{t\geq 0}$ has stationary increments, i.e. $X_t-X_s$
has the same distribution as $X_{t-s}, 0\leq s < t$.
\end{itemize}
\end{definition}The jump of $X_t$ at $t\geq 0$ is given by $\triangle X_t = X_t -
X_{t-}$. Let $Z\in \mathcal{B}(\mathbb{R}^+ \times E)$. We define $$
N(t, Z)= N(t, Z, \omega)= \sum_{s: 0<s\leq t} \chi_{_Z} (\triangle
X_s).$$

In other words, $N(t, Z)$ is the number of jumps of size $\triangle
X_s\in Z$ which occur before or at time $t$. $N(t, Z)$ is called the
\emph{Poisson random measure} (or \emph{jump measure}) of
$(X_t)_{t\geq 0}$. The differential form of this measure is written
as $N(dt, dz)(\omega)$.

We call $\tilde{N}(dt, dz) = N(dt, dz) - dt \lambda(dz)$ a
\emph{compensated Poisson random measure (cPrm)}, where $dt
\lambda(dz)$ is known as \emph{compensator} of the L\'{e}vy process
$(X_t)_{t\geq 0}$. Here $dt$ denotes the Lesbegue measure on
$\mathcal{B}(\mathbb{R}^{+})$, and $\lambda(dz)$ is a
$\sigma$-finite measure on $(Z, \mathcal{B}(Z))$.

\begin{lemma}
If $X = (X_t)_{t\geq0}$ is a L\'{e}vy process, then $X_t$ is
infinitely divisible for each $t\geq0$
\end{lemma}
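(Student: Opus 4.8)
The plan is to exhibit, for each fixed $t\ge 0$ and each $n\in\mathbb{N}$, an explicit decomposition of $X_t$ as a sum of $n$ independent, identically distributed random variables; since $n$ is arbitrary, this is precisely the definition of infinite divisibility, so nothing further is needed.

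First I would partition $[0,t]$ into $n$ subintervals of equal length by setting $t_k = kt/n$ for $k=0,1,\dots,n$, and write the telescoping identity
\[
X_t = \sum_{k=1}^{n}\bigl(X_{t_k}-X_{t_{k-1}}\bigr),
\]
which uses only the normalization $X_0=0$ a.s. (property (ii) of the definition of a L\'{e}vy process). Next I would invoke the independence of increments (property (iii)): an induction on $k$ shows that the family $\{X_{t_k}-X_{t_{k-1}} : k=1,\dots,n\}$ is mutually independent, since $X_{t_k}-X_{t_{k-1}}$ is independent of $\mathcal{F}_{t_{k-1}}$ while each earlier increment is $\mathcal{F}_{t_{k-1}}$-measurable. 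Then I would use stationarity of increments (property (vi)) to conclude that $X_{t_k}-X_{t_{k-1}}$ has the same distribution as $X_{t_k-t_{k-1}}=X_{t/n}$ for every $k$. Combining these observations, if $\mu_s$ denotes the law of $X_s$ then $\mu_t = \mu_{t/n}^{\ast n}$; equivalently, in terms of characteristic functions, $\widehat{\mu_t} = \bigl(\widehat{\mu_{t/n}}\bigr)^{n}$. Thus $X_t$ is the sum of $n$ i.i.d. copies of a random variable with law $\mu_{t/n}$.

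Since $n\in\mathbb{N}$ was arbitrary, $\mu_t$ admits an $n$-th convolution root for every $n$, which is exactly the assertion that $X_t$ is infinitely divisible. I do not anticipate any real obstacle here: the single point requiring a little care is passing from the pairwise statement ``$X_t-X_s$ is independent of $\mathcal{F}_s$'' to the mutual independence of the $n$ increments above, which is precisely what the induction on $k$ supplies; stochastic continuity and the c\`{a}dl\`{a}g property (properties (iv)--(v)) play no role in this particular lemma.
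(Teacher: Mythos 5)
Your argument is correct and is essentially the standard proof that the paper invokes by citation (Proposition 1.3.1 of Applebaum): the telescoping decomposition over an equipartition of $[0,t]$, with mutual independence of the increments obtained inductively from independence of the past and identical distribution from stationarity, giving $\mu_t=\mu_{t/n}^{\ast n}$ for every $n$. Nothing further is needed.
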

\noindent For proof see Proposition 1.3.1 of \cite{Ap}.

\begin{lemma}
If $X = (X_t)_{t\geq0}$ is a L\'{e}vy process, then
$$ \phi_{X_t}(u) = e^{t\eta(u)},$$
for each $u\in \mathbb{R}^d$, $t\geq0$, where $\eta$ is the L\'{e}vy
symbol of $X_1$.
\end{lemma}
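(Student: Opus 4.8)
The plan is to exploit the defining properties of a L\'{e}vy process---stationary and independent increments, stochastic continuity, and $X_0 = 0$---to reduce the computation of the characteristic function to an elementary functional equation. Fix $u \in \mathbb{R}^d$, write $\phi_{X_t}(u) = E[e^{i(u, X_t)}]$, and introduce the auxiliary function $f(t) := \phi_{X_t}(u)$ for $t \geq 0$. The goal is to show that $f$ satisfies a multiplicative semigroup identity and is continuous and nonvanishing, so that it must be a genuine exponential.

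First I would establish the semigroup identity
\[
\phi_{X_{s+t}}(u) = \phi_{X_s}(u)\,\phi_{X_t}(u), \qquad s, t \geq 0.
\]
Writing $X_{s+t} = (X_{s+t} - X_t) + X_t$, property (iii) of Definition~2.3 (independence of the increment $X_{s+t} - X_t$ from $\mathcal{F}_t$, and hence from $X_t$) allows me to factor the expectation as $E[e^{i(u, X_{s+t})}] = E[e^{i(u, X_{s+t} - X_t)}]\,E[e^{i(u, X_t)}]$, while property (vi) (stationarity) identifies the law of $X_{s+t} - X_t$ with that of $X_s$, so that $E[e^{i(u, X_{s+t}-X_t)}] = \phi_{X_s}(u)$. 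Thus $f(s+t) = f(s)f(t)$, with $f(0) = 1$ by property (ii) ($X_0 = 0$ a.s.).

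Next I would verify that $f$ is continuous on $[0,\infty)$. Stochastic continuity (property (iv)) gives $X_s \to X_t$ in probability as $s \to t$; since $x \mapsto e^{i(u,x)}$ is bounded and continuous, $e^{i(u,X_s)} \to e^{i(u,X_t)}$ in probability, and the bounded convergence theorem (valid for convergence in probability of a uniformly bounded family) yields $f(s) \to f(t)$, so $f$ is continuous. I would then show $f$ is nowhere zero: continuity together with $f(0)=1$ gives a $\delta>0$ with $f \neq 0$ on $[0,\delta]$, and for arbitrary $t>0$ the relation $f(t) = f(t/n)^n$ with $n$ chosen so that $t/n < \delta$ forces $f(t)\neq 0$.

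Finally, with $f$ continuous, nonvanishing, and satisfying $f(s+t)=f(s)f(t)$, I would pass to a continuous logarithm. Because the domain $[0,\infty)$ is an interval, there is a single-valued continuous branch $g(t) := \mathrm{Log}\,f(t)$ with $g(0)=0$, and the multiplicative equation becomes the additive Cauchy equation $g(s+t)=g(s)+g(t)$ for a continuous $g$, whose only solutions are linear: $g(t)=t\,g(1)$. Exponentiating gives $f(t)=e^{t\,g(1)}$, and since $g(1)=\mathrm{Log}\,f(1)=\mathrm{Log}\,\phi_{X_1}(u)=:\eta(u)$ is precisely the L\'{e}vy symbol of $X_1$ (an infinitely divisible random variable by the preceding lemma, so that $\phi_{X_1}(u)=e^{\eta(u)}$ with $\phi_{X_1}$ nonvanishing), we conclude $\phi_{X_t}(u)=e^{t\eta(u)}$ for all $u \in \mathbb{R}^d$ and $t\geq 0$. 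The only genuinely delicate point is this last passage from the functional equation to the exponential form: one must rule out vanishing of $f$ and select a consistent continuous branch of the complex logarithm in $t$, and it is exactly the continuity furnished by stochastic continuity that makes this step rigorous.
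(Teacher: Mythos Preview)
Your argument is correct and is the standard proof of this classical fact. The paper itself does not supply a proof at all: it simply refers the reader to Theorem~1.3.3 of Applebaum~\cite{Ap}. Your write-up is essentially that proof, combining the semigroup identity $\phi_{X_{s+t}}(u)=\phi_{X_s}(u)\phi_{X_t}(u)$ (from independent, stationary increments) with the continuity of $t\mapsto\phi_{X_t}(u)$ (from stochastic continuity; this is exactly the content of the paper's Lemma~2.6) and the Cauchy functional equation. One small point worth making explicit: after choosing a continuous branch $g$ of $\log f$ with $g(0)=0$, the relation $f(s+t)=f(s)f(t)$ a priori only gives $g(s+t)-g(s)-g(t)\in 2\pi i\mathbb{Z}$; you then need the continuity of $(s,t)\mapsto g(s+t)-g(s)-g(t)$ together with its vanishing at $(0,0)$ to conclude it is identically zero. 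This is routine, but it is the one place a careful reader might pause.
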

\noindent For proof see Theorem 1.3.3 of \cite{Ap}.

\begin{lemma}
If $X = (X_t)_{t\geq0}$ is stochastically continuous, then the map
$t \rightarrow \phi_{X_t}(u)$ is continuous for each $u\in
\mathbb{R}^d$.
\end{lemma}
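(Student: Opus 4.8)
The plan is to unwind the definition: for fixed $u \in \mathbb{R}^d$, the quantity $\phi_{X_t}(u) = E\!\left[e^{i\langle u, X_t\rangle}\right]$ is the characteristic function of the random vector $X_t$, so continuity of $t \mapsto \phi_{X_t}(u)$ means precisely that $\phi_{X_s}(u) \to \phi_{X_t}(u)$ whenever $s \to t$. First I would observe that the hypothesis of stochastic continuity, $\lim_{s\to t}P(|X_s - X_t| > \e) = 0$ for every $\e > 0$, is exactly the statement that $X_s \to X_t$ in probability as $s \to t$. Thus the whole argument reduces to showing that convergence in probability forces the characteristic functions to converge, for which no structure of the Lévy process beyond stochastic continuity is needed.

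The key step is a single elementary estimate. Writing
\[
\left|\phi_{X_s}(u) - \phi_{X_t}(u)\right| = \left|E\!\left[e^{i\langle u, X_s\rangle} - e^{i\langle u, X_t\rangle}\right]\right| \le E\!\left[\left|e^{i\langle u, X_s - X_t\rangle} - 1\right|\right],
\]
I would split the last expectation over the complementary events $\{|X_s - X_t| \le \delta\}$ and $\{|X_s - X_t| > \delta\}$ for a parameter $\delta > 0$ to be chosen. On the first event the elementary bound $|e^{i\theta} - 1| \le |\theta|$ together with Cauchy--Schwarz gives $|e^{i\langle u, X_s - X_t\rangle} - 1| \le |u|\,|X_s - X_t| \le |u|\,\delta$; on the second event the trivial bound $|e^{i\langle u, X_s - X_t\rangle} - 1| \le 2$ applies. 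Combining the two regimes yields
\[
\left|\phi_{X_s}(u) - \phi_{X_t}(u)\right| \le |u|\,\delta + 2\,P\!\left(|X_s - X_t| > \delta\right).
\]

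To finish, given $\e > 0$ I would first fix $\delta$ small enough that $|u|\,\delta < \e/2$, and then invoke stochastic continuity to find a neighbourhood of $t$ on which $2\,P(|X_s - X_t| > \delta) < \e/2$; on that neighbourhood the right-hand side is below $\e$, proving continuity at $t$, and since $t$ is arbitrary the map is continuous on $\mathbb{R}^+$. Honestly there is no serious obstacle here: the lemma is a soft consequence of the fact that convergence in probability implies convergence in distribution, and the only point requiring care is to keep the two pieces of the split independent, so that the deterministic term $|u|\,\delta$ and the probability term can be made small separately. An equally valid route would be to note that $z \mapsto e^{i\langle u, z\rangle}$ is bounded and continuous, deduce $e^{i\langle u, X_s\rangle} \to e^{i\langle u, X_t\rangle}$ in probability by the continuous mapping theorem, and pass to the limit under the expectation by the bounded convergence theorem; I prefer the explicit estimate above because it is self-contained and quantitative.
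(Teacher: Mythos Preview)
Your argument is correct and entirely standard: the split into $\{|X_s-X_t|\le\delta\}$ and its complement, combined with the elementary bounds $|e^{i\theta}-1|\le|\theta|$ and $|e^{i\theta}-1|\le2$, is exactly the right way to pass from convergence in probability to convergence of characteristic functions. The paper itself does not supply a proof but simply refers the reader to Lemma~1.3.2 of Applebaum \cite{Ap}, where the argument is essentially the one you have written; so there is nothing to compare beyond noting that your self-contained estimate matches the cited source.
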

\noindent For proof see Lemma 1.3.2 of \cite{Ap}.

\begin{lemma}
If $X = (X_t)_ {t\geq0}$ is a stochastic process and there exists a
sequence of L\'{e}vy processes $(X_n, n\in\mathbb{N})$ with each
$X_n = (X_{n_t}, t\geq0)$ such that $X_{n_t}$ converges in
probability to $X_t$ for each $t\geq0$ and $$\lim_{n \to \infty}
\limsup_{t\rightarrow0} P(|X_{n_t}-X_t|>a) = 0,$$ for all $a>0$,
then $X$ is a L\'{e}vy process.
\end{lemma}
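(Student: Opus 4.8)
The plan is to verify, one by one, the defining properties (i)--(vi) of a L\'evy process in Definition 2.3 for $X$, equipped with its own natural filtration $\mathcal F^X_t=\sigma(X_r:0\le r\le t)$. The distributional properties will come out of weak convergence (equivalently, convergence of characteristic functions), while the uniform-in-$n$ hypothesis near $t=0$ will be used precisely, and only, to recover stochastic continuity.

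First the cheap steps. Since $X_{n_0}=0$ a.s.\ and $X_{n_0}\to X_0$ in probability, $X_0=0$ a.s., which is (ii); (i) is immediate for the natural filtration. Now fix times $0\le t_0<t_1<\cdots<t_m$. Convergence in probability at each $t_j$ gives $(X_{n_{t_0}},\dots,X_{n_{t_m}})\to(X_{t_0},\dots,X_{t_m})$ in probability, hence in law, so the increment vector $(X_{n_{t_1}}-X_{n_{t_0}},\dots,X_{n_{t_m}}-X_{n_{t_{m-1}}})$ converges in law to the corresponding increment vector of $X$. Each of these vectors has independent coordinates, so its joint characteristic function factorizes, and the factorization passes to the limit; hence $X$ has independent increments. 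A routine $\pi$-$\lambda$ (monotone class) argument then upgrades this to independence of $X_t-X_s$ from $\mathcal F^X_s$ for all $0\le s<t$, which is (iii). Likewise $X_{n_t}-X_{n_s}\stackrel{d}{=}X_{n_{t-s}}$ for every $n$, and this relation passes to the weak limit to give $X_t-X_s\stackrel{d}{=}X_{t-s}$, which is (vi).

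The real content is stochastic continuity, (iv). Having established that $X$ has stationary increments with $X_0=0$, it suffices to show $P(|X_h|>\e)\to0$ as $h\downarrow0$ for each $\e>0$, since $P(|X_t-X_s|>\e)=P(|X_{|t-s|}|>\e)$. Fix $\e>0$ and $n\in\mathbb N$ and estimate
$$P(|X_h|>\e)\le P\bigl(|X_h-X_{n_h}|>\e/2\bigr)+P\bigl(|X_{n_h}-X_{n_0}|>\e/2\bigr).$$
The second term tends to $0$ as $h\downarrow0$ by stochastic continuity of the L\'evy process $X_n$, so
$$\limsup_{h\downarrow0}P(|X_h|>\e)\le\limsup_{h\downarrow0}P\bigl(|X_h-X_{n_h}|>\e/2\bigr)$$
for every $n$, and letting $n\to\infty$ the right-hand side vanishes by the hypothesis $\lim_{n\to\infty}\limsup_{t\to0}P(|X_{n_t}-X_t|>a)=0$. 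This is exactly the step that requires that assumption --- pointwise-in-$t$ convergence in probability alone does not transmit stochastic continuity from the $X_n$ to $X$ --- and I expect (iv) to be the only genuinely non-mechanical part of the proof. (As a by-product, combining the representation $\phi_{X_{n_t}}(u)=e^{t\eta_n(u)}$ proved earlier with $\phi_{X_{n_t}}(u)\to\phi_{X_t}(u)$ identifies a limiting L\'evy symbol $\eta$ and yields $\phi_{X_t}(u)=e^{t\eta(u)}$, so each $X_t$ is infinitely divisible; this is not needed for the conclusion.)

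It remains to handle (v). Convergence in probability at fixed times carries no information about sample paths, so c\`adl\`ag regularity is not inherited automatically; instead one invokes the classical regularization theorem for processes with stationary independent increments, by which a stochastically continuous such process admits a c\`adl\`ag modification, and one replaces $X$ by that modification. This changes none of (i)--(iv), (vi), so the modified process satisfies all of Definition 2.3 and is therefore a L\'evy process, as claimed.
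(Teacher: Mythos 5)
The paper offers no proof of this lemma at all: it is quoted verbatim from Applebaum and disposed of with the citation ``see Theorem 1.3.7 of \cite{Ap}.'' Your argument is correct and is essentially the textbook proof behind that citation: properties (i), (ii), (iii), (vi) follow from convergence in probability at fixed times (via convergence of characteristic functions and factorization of the joint characteristic function of increments, upgraded to independence from $\mathcal F^X_s$ by a $\pi$--$\lambda$ argument), and the uniform-in-$n$ hypothesis is used exactly where it must be, namely in the $\varepsilon/2$-estimate giving stochastic continuity. You also correctly flag the one point where the statement, read against the paper's Definition 2.3, is slightly too strong: pointwise convergence in probability cannot produce c\`adl\`ag paths for $X$ itself, so one either works with Applebaum's definition (which omits the c\`adl\`ag requirement and establishes a c\`adl\`ag modification later) or, as you do, replaces $X$ by its c\`adl\`ag modification furnished by the regularization theorem for stochastically continuous processes with stationary independent increments; with that reading your proof is complete and matches the intended argument.
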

\noindent For proof see Theorem 1.3.7 of \cite{Ap}.

\begin{example}(Brownian motion)
A (standard) Brownian motion in $\mathbb{R}^d$ is a L\'{e}vy process
$B = (B_t)_{t\geq0}$ for which
\begin{enumerate}
\item[\textbf{(B1)}] $B_t$ $\sim N(0,tI)$ for each $t\geq0$,
\item[\textbf{(B2)}] B has continuous sample paths.
\end{enumerate}
It follows immediately from $(B1)$ that if B is a standard Brownian
motion then its characterestic function is given by
$$\phi_{B_t}(u) = exp\left(-\frac{1}{2}t|u|^2\right)$$
for each $u\in \mathbb{R}^d, t\geq0.$
\end{example}

\begin{example}(The Poisson Process) The Poisson process of intensity $\lambda>0$ is a L\'{e}vy
process N taking values in $\mathbb{N}\cup{\{0\}}$ wherein each
$N(t) \sim \pi(\lambda t)$, so that we have
$$P(N(t) = n) = \frac{(\lambda t)^n}{n!}e^{-\lambda t}$$
for each n =0, 1, 2, ....

  The \emph{compensated Poisson Process} $\tilde{N} = (\tilde{N}(t), t\geq0)$ where each $\tilde{N}(t) = N(t)-\lambda t.$
Note that $\mathbb{E}(\tilde{N}(t)) = 0.$ and
$\mathbb{E}(\tilde{N}(t)^2) = \lambda t$ for each $t\geq0$.
\end{example}

\begin{lemma}
Let $Z$ be bounded below, then $N(t,Z)<\infty$ (a.s) for all
$t\geq0.$
\end{lemma}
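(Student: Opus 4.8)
The plan is to give a purely path-wise argument based on the c\`{a}dl\`{a}g regularity of the L\'{e}vy paths, with no further probabilistic ingredient. Recall that $Z$ being bounded below means $0\notin\overline{Z}$, so there is a constant $\delta>0$ with $|z|\geq\delta$ for every $z\in Z$; hence $N(t,Z)\leq N(t,\{z:|z|\geq\delta\})$ for all $t\geq0$, and it is enough to show the right-hand side is finite a.s. Equivalently, fixing $\omega\in\Omega$, I must show that the c\`{a}dl\`{a}g path $s\mapsto X_s(\omega)$ has only finitely many jumps of magnitude at least $\delta$ on the compact interval $[0,t]$.

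I would establish this by contradiction. Suppose the path $X_{\cdot}(\omega)$ has infinitely many jump times $s_1,s_2,\ldots\in[0,t]$ with $|\triangle X_{s_n}(\omega)|\geq\delta$. By the Bolzano--Weierstrass theorem, after passing to a subsequence we may assume $s_n\to s^{\star}\in[0,t]$ and that the convergence is monotone. If $s_n\uparrow s^{\star}$, then values just to the left of each $s_n$ still lie to the left of $s^{\star}$, so both $X_{s_n}(\omega)$ and the left limits $X_{s_n-}(\omega)$ converge to the left limit $X_{s^{\star}-}(\omega)$, whence $\triangle X_{s_n}(\omega)=X_{s_n}(\omega)-X_{s_n-}(\omega)\to0$. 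If $s_n\downarrow s^{\star}$, right continuity at $s^{\star}$ provides, for every $\eta>0$, a right-neighbourhood $[s^{\star},s^{\star}+h)$ on which $|X_u(\omega)-X_{s^{\star}}(\omega)|\leq\eta$; since $s_n$ lies in this neighbourhood for $n$ large, letting $u\uparrow s_n$ along $(s^{\star},s_n)$ gives $|X_{s_n-}(\omega)-X_{s^{\star}}(\omega)|\leq\eta$ and also $|X_{s_n}(\omega)-X_{s^{\star}}(\omega)|\leq\eta$, so again $\triangle X_{s_n}(\omega)\to0$. In either case this contradicts $|\triangle X_{s_n}(\omega)|\geq\delta>0$ for all $n$.

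Hence, for every $\omega$, the path has at most finitely many jumps of size $\geq\delta$ in $[0,t]$, i.e. $N(t,\{z:|z|\geq\delta\})(\omega)<\infty$, and therefore $N(t,Z)(\omega)<\infty$; in particular $N(t,Z)<\infty$ a.s. for every $t\geq0$. The only mildly delicate point is the bookkeeping with the left limits at the accumulation point $s^{\star}$ in the two monotonicity cases, where one has to interleave an auxiliary $\eta$-window around $s^{\star}$ with points sitting just below each $s_n$; this is routine real analysis. An alternative is to invoke that $N(t,Z)$ is Poisson-distributed with parameter $t\lambda(Z)$ and that $\lambda(Z)<\infty$ whenever $Z$ is bounded below, but since the finiteness of $\lambda$ away from the origin is itself part of the L\'{e}vy--It\^{o} structure being developed, the path-wise argument above is preferable for being self-contained.
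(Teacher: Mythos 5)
Your argument is correct, and it is essentially the proof the paper relies on: the paper gives no proof of its own but cites Lemma 2.3.4 of Applebaum, whose argument is precisely this path-wise one, namely that a c\`{a}dl\`{a}g path on a compact interval cannot have infinitely many jumps of magnitude bounded below by $\delta>0$, since an accumulation point of such jump times would contradict the existence of left and right limits there. Your case analysis at the accumulation point $s^{\star}$ is handled correctly, so nothing further is needed.
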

\noindent For proof see Lemma 2.3.4 of \cite{Ap}.

\begin{lemma}
\sni
\begin{enumerate}
\item [(i)] If Z is bounded below, then $(N(t,Z),t\geq0)$ is a Poisson process with intensity $\lambda(Z)$.
\item [(ii)] If $Z_1,\ldots,Z_m \in\mathcal{B}(\mathbb{R}^d-\{0\})$ are disjoint and bounded below and if $s_1,\ldots,s_m$
$\in\mathbb{R}^+$ are distinct, then the random variables
$N(s_1,Z_1),\ldots, N(s_m,A_m)$ are independent.
\end{enumerate}
\end{lemma}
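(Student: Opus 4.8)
The plan is to prove both assertions by transferring the stationary and independent increment structure of the Lévy process $X$ to the integer-valued counting process $N(\cdot, Z)$, and then to identify the resulting process through its characteristic function, for which the lemma giving $\phi_{X_t}(u) = e^{t\eta(u)}$ is the main tool.

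For part (i), first I would use that $Z$ is bounded below, so that by the preceding lemma $N(t, Z) < \infty$ almost surely for every $t \geq 0$; thus $N(\cdot, Z)$ is a well-defined $\mathbb{N}\cup\{0\}$-valued process with $N(0, Z) = 0$. Next, for $0 \leq s < t$ I would write the increment as $N(t,Z) - N(s,Z) = \sum_{s < u \leq t}\chi_Z(\triangle X_u)$ and observe that this is a measurable functional of the path increments $(X_u - X_s)_{s < u \leq t}$; consequently the independence and stationarity of the increments of $X$ pass directly to $N(\cdot, Z)$. Together with the c\`{a}dl\`{a}g property and stochastic continuity (a fixed time is almost surely not a jump time of $X$), this shows that $N(\cdot, Z)$ is itself a real-valued Lévy process, non-decreasing and increasing only by unit jumps since $X$ has at most one jump at each instant. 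Applying the characteristic-function lemma with $u \in \mathbb{R}$ and using the Lévy--Khintchine representation for a process whose only jumps are of size $+1$ occurring at rate $\lambda(Z) = E[N(1,Z)]$ forces the symbol to be $\eta(u) = \lambda(Z)(e^{iu}-1)$, which is exactly that of a Poisson process; equivalently, the unit-jump and independent-increment properties yield the forward equations
$$p_0'(t) = -\lambda(Z)\,p_0(t), \qquad p_n'(t) = -\lambda(Z)\big(p_n(t) - p_{n-1}(t)\big), \quad n \geq 1,$$
for $p_n(t) = P(N(t,Z) = n)$, whose solution is $P(N(t,Z) = n) = e^{-\lambda(Z)t}(\lambda(Z)t)^n/n!$. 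This proves that $N(\cdot, Z)$ is a Poisson process of intensity $\lambda(Z)$.

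For part (ii), the decisive point is that the sets $Z_1, \ldots, Z_m$ are disjoint, so at any jump time the single jump $\triangle X_u$ can belong to at most one $Z_j$ and the processes $N(\cdot, Z_1), \ldots, N(\cdot, Z_m)$ never jump simultaneously. I would establish independence at a common time $t$ through the joint characteristic function. Setting $h(x) = \sum_{j=1}^m u_j \chi_{Z_j}(x)$ for real $u_1, \ldots, u_m$, disjointness gives $e^{ih(x)} - 1 = \sum_{j=1}^m (e^{iu_j}-1)\chi_{Z_j}(x)$, and since $\sum_{j} u_j N(t,Z_j) = \sum_{0 < u \leq t} h(\triangle X_u)$ is a compound-Poisson functional of the jumps, the Lévy--Khintchine formula yields
$$E\Big[\exp\Big(i\sum_{j=1}^m u_j N(t,Z_j)\Big)\Big] = \exp\Big(t\sum_{j=1}^m (e^{iu_j}-1)\lambda(Z_j)\Big) = \prod_{j=1}^m E\big[e^{iu_j N(t,Z_j)}\big].$$
This factorization is precisely the mutual independence of $N(t,Z_1), \ldots, N(t, Z_m)$. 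The case of distinct times $s_1, \ldots, s_m$ then follows by combining this spatial independence with the independent-increment property of part (i): ordering the times and successively decomposing each $N(s_j, Z_j)$ into its value at the smallest time plus independent increments reduces the joint law to the product of the individual laws.

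The step I expect to be the main obstacle is the rigorous transfer in part (i): one must argue carefully that $N(t,Z) - N(s,Z)$ is measurable with respect to the $\sigma$-field generated by the increments of $X$ over $(s,t]$, so that independence and stationarity genuinely carry over. This rests on the almost-sure finiteness of the jump count (so that the defining sum is well defined) and on approximating $\chi_Z(\triangle X_u)$ by functionals of the increment path. Once $N(\cdot, Z)$ is known to be a non-decreasing, unit-jump, integer-valued Lévy process, its identification as a Poisson process and the computation of its intensity are routine.
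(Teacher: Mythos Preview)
Your proposal is correct and follows the standard route to this result. Note, however, that the paper does not supply its own proof of this lemma: it merely states the result and refers the reader to Theorem~2.3.5 of Applebaum~\cite{Ap}. Your argument is essentially the one given there---transfer the independent and stationary increment structure of $X$ to the counting process $N(\cdot,Z)$, use the unit-jump property to identify it as Poisson, and handle part~(ii) via factorization of the joint characteristic function combined with the increment decomposition over the ordered times. There is nothing to compare against in the paper itself, and your write-up would serve as a complete substitute for the citation.
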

\noindent For proof see Theorem 2.3.5 of \cite{Ap}.

\begin{lemma}
Every L\'{e}vy process is a semimartingale.
\end{lemma}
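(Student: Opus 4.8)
The plan is to prove the statement through the Lévy-Itô decomposition, combined with the definition of a semimartingale as an adapted càdlàg process that decomposes as $X_t = X_0 + M_t + A_t$, where $M$ is a local martingale and $A$ is an adapted càdlàg process of finite variation on compacts. First I would recall that any Lévy process $X=(X_t)_{t\geq 0}$ admits the representation
$$X_t = bt + B_t + \int_{|z|<1} z\,\tilde{N}(t,dz) + \int_{|z|\geq 1} z\,N(t,dz),$$
where $b$ is a constant drift, $B_t$ is a Brownian motion with some covariance matrix, $N(t,dz)$ is the Poisson random measure of the jumps of $X$ introduced above, and $\tilde{N}(t,dz)=N(t,dz)-t\lambda(dz)$ is the associated compensated Poisson random measure. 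The $\sigma$-finite Lévy measure $\lambda$ satisfies $\int (|z|^2\wedge 1)\,\lambda(dz)<\infty$, which is precisely what makes the small-jump integral well defined.

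Next I would sort the four terms into a finite-variation part and a local-martingale part. The drift $bt$ is deterministic, continuous, and of finite variation on every compact interval. The large-jump term $\int_{|z|\geq 1} z\,N(t,dz)$ is a compound Poisson process: since the set $\{|z|\geq 1\}$ is bounded below, the lemma stated above guarantees $N(t,\{|z|\geq 1\})<\infty$ a.s., so this term carries only finitely many jumps on $[0,t]$ and is therefore a pure-jump adapted càdlàg process of finite variation. Consequently $A_t := bt + \int_{|z|\geq 1} z\,N(t,dz)$ is an adapted càdlàg finite-variation process.

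It remains to show that $M_t := B_t + \int_{|z|<1} z\,\tilde{N}(t,dz)$ is a local martingale. The Brownian part $B_t$ is a continuous martingale. For the compensated small-jump integral, I would first observe that, for each fixed truncation level $\varepsilon$, the process $\int_{\varepsilon<|z|<1} z\,\tilde{N}(t,dz)$ is a compensated compound Poisson process and hence a mean-zero martingale; by the $L^2$-isometry for compensated Poisson integrals,
$$E\left|\int_{\varepsilon<|z|<1} z\,\tilde{N}(t,dz)\right|^2 = t\int_{\varepsilon<|z|<1}|z|^2\,\lambda(dz) \leq t\int_{|z|<1}|z|^2\,\lambda(dz) < \infty,$$
uniformly in $\varepsilon$. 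Letting $\varepsilon\downarrow 0$ and applying Doob's maximal inequality together with martingale convergence, the limit $\int_{|z|<1} z\,\tilde{N}(t,dz)$ exists in $L^2$ and is itself a square-integrable martingale. Adding the two pieces, $M_t$ is a local martingale, and thus $X_t = X_0 + M_t + A_t$ is the required semimartingale decomposition.

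The main obstacle is the careful treatment of the small-jump integral: the infinitely many small jumps cannot be summed pathwise, so one cannot simply declare this term to be of finite variation, and its martingale property must instead be obtained as the $L^2$-limit of the truncated compensated integrals. Justifying the passage to the limit relies on the integrability $\int (|z|^2\wedge 1)\,\lambda(dz)<\infty$ of the Lévy measure and the uniform Doob estimate above. Once this limit is controlled, the semimartingale decomposition follows immediately.
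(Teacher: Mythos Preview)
Your argument is correct and is precisely the standard route via the L\'{e}vy--It\^{o} decomposition (stated in the paper as Lemma~2.14): split $X_t$ into the finite-variation part $bt+\int_{|z|\geq 1} z\,N(t,dz)$ and the local-martingale part $B_t+\int_{|z|<1} z\,\tilde N(t,dz)$, with the small-jump integral obtained as an $L^2$-limit of compensated compound Poisson martingales. The paper itself does not give a proof but simply refers to Proposition~2.7.1 of Applebaum~\cite{Ap}, whose argument is exactly the one you outline, so your proposal coincides with the intended proof.
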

\noindent For proof see Proposition 2.7.1 of \cite{Ap}.

\begin{definition}(Poisson integration)
Let N be the Poisson measure associated to a L\'{e}vy process $X =
(X_t)_{t\geq0}.$
  Let $g$ be a measurable function from $\mathbb{R}^d$ into $\mathbb{R}^d$ and let $Z$ be bounded below; then for each
$t>0$, $\omega\in\Omega,$ we may define the \emph{Poisson integral}
of $g$ as random finite sum by
$$\int_Zg(z)N(t,dz)(\omega) = \sum_{z \in Z}{ g(z)N(t,\{z\})(\omega)}$$
Note that each $\int_Zg(z)N(t,dz)$ is an $\mathbb{R}^d$ valued
random variable and gives rise to a c\`{a}dl\`{a}g stochastic
process as we vary t. \sni Now, since $N(t,\{x\}) \neq 0
\Leftrightarrow \Delta X_u = x$ for at least one $0\leq u\leq t,$ we
have
\begin{equation}\label{1}
    \int_Zg(z)N(t,dz) = \sum_{0 \leq u \leq t} {g(\Delta X_u)\chi_Z(\Delta X_u)}.
\end{equation}

\end{definition}

\begin{lemma}(The L\'{e}vy-It\^{o} decomposition)
If $X = (X_t)_{t\geq0}$ is a L\'{e}vy process, then there exists
$b\in\mathbb{R}^d$, a Brownian motion $B_A$ with covariance matrix A
and an independent Poisson random measure N on $\mathbb{R}^+ \times
(\mathbb{R}^d - \{0\})$ such that, for each $t\geq0$,
$$X_t = bt+B_A(t)+\int_{|z|<1}z\tilde{N}(t,dz)+\int_{|z|\geq1}zN(t,dz).$$
\end{lemma}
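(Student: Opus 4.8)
The plan is to decompose $X$ according to the size of its jumps, isolating the large jumps, the compensated small jumps, and a continuous remainder which must be Gaussian. First I would fix the Poisson random measure $N$ generated by the jumps $\Delta X_s$ as in the Poisson integration definition, and write $\lambda(\cdot)$ for its intensity measure; by the lemma on bounded-below sets, $N(t,Z)$ is a Poisson process of intensity $\lambda(Z)$ for every $Z$ bounded away from the origin, and $N$ assigns independent values to disjoint such sets. The target identity then reads $X_t = Y_t + \int_{|z|<1} z\tilde N(t,dz) + \int_{|z|\geq 1} z N(t,dz)$, and the work is to make each piece meaningful and to identify $Y$.

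For the large jumps, the set $\{|z|\geq 1\}$ is bounded below, so by the finiteness lemma $\int_{|z|\geq1} z N(t,dz) = \sum_{0\leq s\leq t}\Delta X_s\,\chi_{\{|\Delta X_s|\geq1\}}$ is an a.s.\ finite sum; it is a compound Poisson process, hence a Lévy process, with no convergence issue. For the small jumps the naive sum $\sum_{|z|<1}z$ need not converge absolutely, so I would work with the compensated integrals $M_t^{\varepsilon} := \int_{\varepsilon\leq|z|<1} z\,\tilde N(t,dz)$ for $0<\varepsilon<1$, each of which is a centred square-integrable martingale. Using the independence of $N$ over disjoint annuli together with the Poisson-process identification, a second-moment (isometry) computation gives $E|M_t^{\varepsilon}-M_t^{\varepsilon'}|^2 = t\int_{\varepsilon'\leq|z|<\varepsilon}|z|^2\lambda(dz)$; the crucial fact that $\int_{|z|<1}|z|^2\lambda(dz)<\infty$ makes $(M_t^{\varepsilon})$ Cauchy in $L^2$ as $\varepsilon\downarrow0$, and the limit, taken as a martingale, defines $\int_{|z|<1}z\tilde N(t,dz)$ as a square-integrable Lévy process carrying exactly the small jumps.

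It remains to analyse $Y_t := X_t - \int_{|z|<1}z\tilde N(t,dz) - \int_{|z|\geq1}zN(t,dz)$. I would first check that $Y$ is again a Lévy process (using stationarity and independence of increments of $X$ and of the constructed integrals) and that, by construction, it has no jumps, hence continuous paths a.s. The next step is the classical fact that a continuous Lévy process is Gaussian: its characteristic function $\phi_{Y_t}(u)=e^{t\eta(u)}$ from the earlier lemma, combined with path continuity, forces $\eta$ to be a quadratic $i(b,u)-\frac{1}{2}(u,Au)$, so $Y_t = bt + B_A(t)$ for some drift $b\in\mathbb{R}^d$ and a Brownian motion $B_A$ with covariance $A$. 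Finally I would establish independence of $B_A$ from $N$ by showing that the joint characteristic function of the continuous part and of finitely many increments of the Poisson integrals factorizes, which follows from the independence of $N$ on disjoint sets and from the fact that removing the jumps leaves an increment independent of the jump configuration.

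The main obstacle is the small-jump step: one must justify that the Lévy measure satisfies $\int_{|z|<1}|z|^2\lambda(dz)<\infty$ and then upgrade the $L^2$-Cauchy convergence of the compensated integrals to an almost-sure, càdlàg, Lévy-process-valued limit whose jumps coincide with the small jumps of $X$. Closely related in difficulty is the independence claim in the last step, since the continuous Gaussian component and the jump measure are built from the same process and their probabilistic independence must be extracted from the independent-increments structure rather than assumed.
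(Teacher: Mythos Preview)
Your outline is the standard Lévy--Itô argument and is correct in its essentials: isolate the compound Poisson large-jump part, build the compensated small-jump martingale as an $L^2$ limit using $\int_{|z|<1}|z|^2\lambda(dz)<\infty$, and identify the continuous remainder as $bt+B_A(t)$ via the fact that a continuous Lévy process is Gaussian. The paper does not give its own proof of this lemma; it simply refers the reader to Theorem~2.4.16 of Applebaum~\cite{Ap}, and your sketch is precisely the route taken there, so there is nothing to contrast.
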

\noindent For proof see Theorem 2.4.16 of \cite{Ap}.

\begin{definition}
Let $E$ and $F$ be separable Banach spaces. Let $F_t:=
\mathcal{B}(\mathbb{R}^+ \times E)\otimes\mathcal{F}_t$ be the
product $\sigma$-algebra generated by the semi-ring $
\mathcal{B}(\mathbb{R}^+ \times E)\times\mathcal{F}_t$ of the
product sets $Z\times F,\quad Z\in\mathcal{B}(\mathbb{R}^+ \times
E),\quad F\in \mathcal{F}_t$ ( where $\mathcal{F}_t$ is the
filtration of the additive process $(X_t)_{t\geq0})$. Let $T>0$, and
\begin{align*}
\mathbb{H}(Z) = & \Big\{g : \mathbb{R}^+ \times Z \times \Omega
\rightarrow F, \quad such \quad that \quad g \quad is \quad
F_T/\mathcal{B}(F)\nonumber\\ & measurable\quad and \quad
g(t,z,\omega)\quad is\quad \mathcal{F}_t - adapted\quad \forall z\in
Z, &\forall t\in (0,T]\Big\}
\end{align*}
Let $p\geq1$,
$$\mathbb{H}^{p}_\lambda([0,T]\times Z;F) = \left\{g\in \mathbb{H}(Z) : \int_0^T\int_Z\mathbb{E}[\|g(t,z,\omega)\|^p_F]\lambda(dz)dt < \infty \right\}$$
\end{definition}

Let $H$ be a vector with components $(H^1,H^2,\ldots,H^d)$ taking
values in $\mathbb{H}^2_\lambda([0,T]\times Z;E)$; then we may
construct an $\mathbb{R}^d$-valued process $A = (A(t),t \geq 0)$
with components $(A^1,A^2,\ldots,A^d)$ where each
$$ A^i(T) = \int_0^T\int_{|z|\leq1} H^i(t,z)\tilde{N}(dt,dz).$$
The construction of $A$ extends to the case where $H$ is no longer
lies in $\mathbb{H}^2_\lambda([0,T]\times Z;E)$ but satisfies
$$P \left(\int_0^T\int_E|H(t,z)|\lambda(dz)dt<\infty \right) = 1.$$
In this case $A$ is still a local martingale. It is an
$L^1$-martingale if $$\int_0^T\int_E
\mathbb{E}(|H(t,z)|)\lambda(dz)dt < \infty.$$ Let us introduce the
compound Poisson process $P = (P_t, t\geq0)$, where each $P(t) =
\int_ZzN(t,dz).$ Let K be a predictable mapping; then, generalizing
equation (1.1), we define
\begin{equation}\label{poi}
    \int_0^T\int_ZK(t,z)N(dt,dz) = \sum_{0\leq u\leq t}K(u,\Delta P_u)\chi_Z(\Delta P_u)
\end{equation}
as a random finite sum.

  In particular, if $H$ satisfies the square-integrability ( or integrability ) condition given above we may then define, for each $1\leq i\leq d$,
$$\int_0^T\int_ZH^i(t,z)\tilde{N}(dt,dz) = \int_0^T\int_ZH^i(t,z)N(dt,dz) - \int_0^T\int_ZH^i(t,z)\lambda(dz)dt.$$

\begin{definition}
An $\mathbb{R}^d$-valued stochastic process $Y=(Y_t)_{t\geq0}$ is a
\emph{L\'{e}vy-type stochastic integral} if it can be written in the
following form, for each $1\leq i\leq d$, $t\geq0$, $1\leq i\leq d$,
$1\leq j\leq m$, $t\geq0$, we have $|G^i|^{1/2}$, $F^i_j \in
\mathrm{L}^2[0,T],H^i \in \mathbb{H}^2_\lambda([0,T]\times Z;E)$ and
$K$ is predictable:
\begin{align}\label{lev}
 Y^i(t) = &Y^i(0) + \int_0^tG^i(s)ds + \int_0^tF^i_j(s)dB^j(s) + \int_0^t\int_{|z|<1}H^i(s,z)\tilde{N}(ds,dz)\nonumber\\ & + \int_0^t\int_{|z|\geq1}K^i(s,z)N(ds,dz)
\end{align}
Here $B$ is an $m-$dimensional standard Brownian motion and $N$ is
an independent Poisson random measure on $\mathbb{R}^+ \times
(\mathbb{R}^d - \{0\})$ with compensator $\tilde{N}$ and intensity
measure $\lambda$, which is a L\'{e}vy measure.
\end{definition}We often simplify complicated expressions by employing the notation of \emph{stochastic differentials} to represent L\'{e}vy-type stochastic integrals.
We then write \eqref{lev} as
$$dY(t) = G(t)dt + F(t)dB(t) + H(t,z)\tilde{N}(dt,dz) + K(t,z)N(dt,dz).$$
When we want particularly to emphasize the domain of integration
with respect to z, we will use the equivalent notation
$$dY(t) = G(t)dt + F(t)dB(t) + \int_{|z|<1}H(t,z)\tilde{N}(dt,dx) + \int_{|z|\geq1}K(t,z)N(dt,dz).$$
Clearly Y is a semi martingale.

Let Y be a general L\'{e}vy-type stochastic process with stochastic
differential
\begin{align}\label{ito}
dY^i(t) = &G^i(t)dt + F^i_j(t)dB^j(t) +
\int_{|z|<1}H^i(t,z)\tilde{N}(dt,dz) \nonumber\\ &+
\int_{|z|\geq1}K^i(t,z)N(dt,dz).
\end{align}
where, for each $1\leq i\leq d$, $1\leq j\leq m$, $t\geq0$,
$|G^i|^{1/2}$, $F^i_j\in\mathrm{L}^2[0,T]$ and
$H^i\in\mathbb{H}^2_\lambda([0,T]\times Z;E).$ Let
$$dY_c(t) = G^i(t)dt + F^i_j(t)dB^j(t),$$
and the discontinuous part of $Y$
$$dY_d(t) = \int_{|z|<1}H^i(t,z)\tilde{N}(dt,dz) + \int_{|z|\geq1}K^i(t,z)N(dt,dz),$$
so that for each $t\geq0$
$$Y(t) = Y(0) + Y_c(t) + Y_d(t).$$

\begin{assumption}
For all $t>0$,
$$\sup_{0\leq s\leq t} \sup_{0<|z|<1}|H(s,z)|<\infty\qquad a.s$$
\end{assumption}

\begin{lemma}(It\^{o}'s theorem 1)
If $Y = (Y_t)_{t\geq0}$ is a L\'{e}vy-type stochastic integral of
the form \eqref{ito}, then, for each $f \in C^2(\mathbb{R}^d),$
$t\geq0$, with probability 1 we have
\begin{align}
f(Y(t)) - &f(Y(0)) \nonumber\\ &=
\int_0^t\partial_if(Y(s-))dY_c^i(s) +
\frac{1}{2}\int_0^t\partial_i\partial_jf(Y(s-))d[Y_c^i,Y_c^j](s)
\nonumber\\ &+ \int_0^t\int_{|z|\geq1}\left[f(Y(s-) + K(s,z)) -
f(Y(s-))\right]N(ds,dz) \nonumber\\ &+
\int_0^t\int_{|z|<1}\left[f(Y(s-) + H(s,z)) -
f(Y(s-))\right]\tilde{N}(ds,dz)\nonumber\\ & +
\int_0^t\int_{|z|<1}\left[f(Y(s-) + H(s,z)) - f(Y(s-))
\right.\nonumber\\ &\left.-
H^i(s,z)\partial_if(Y(s-))\right]\lambda(dz)ds.
\end{align}
\end{lemma}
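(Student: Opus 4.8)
The plan is to reduce the statement to the classical It\^{o} formula for continuous semimartingales by isolating the jumps and treating them through a pathwise telescoping argument, and then to recover the compensated small-jump integral together with its compensator correction via a truncation-and-limit procedure. I would begin by recording the structural fact that the continuous part $Y_c$ and the jump part $Y_d$ have vanishing quadratic covariation, $[Y_c^i, Y_d^j] = 0$, since $Y_c$ is continuous while $Y_d$ is of pure-jump type; this is precisely why no mixed second-order term appears and why the only quadratic-variation contribution in the formula is $\frac12 \int_0^t \partial_i\partial_j f(Y(s-))\, d[Y_c^i, Y_c^j](s)$, arising entirely from the Brownian integral $F^i_j\, dB^j$.

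First I would dispose of the big jumps. Because the region $\{|z|\geq 1\}$ is bounded below, the associated Poisson integral $\int_0^t\int_{|z|\geq1}K^i(s,z)N(ds,dz)$ is, on every interval $[0,t]$, a finite sum over the almost surely finitely many jump times $0<T_1<\cdots<T_M\le t$ (this is the content of the earlier lemma that $N(t,Z)<\infty$ a.s. when $Z$ is bounded below). On each open inter-jump interval the process $Y$ has no big jumps and evolves through its continuous part together with the small-jump martingale, so I can write $f(Y(t))-f(Y(0))$ as a telescoping sum of the increments across the intervals $(T_{k-1},T_k)$ and the jumps at the $T_k$. At each $T_k$ the increment is exactly $f(Y(T_k-)+K(T_k,\Delta P_{T_k}))-f(Y(T_k-))$, and summing these over the jump times reassembles the term $\int_0^t\int_{|z|\geq1}[f(Y(s-)+K(s,z))-f(Y(s-))]N(ds,dz)$.

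It then remains to prove the formula for the process without big jumps, namely the continuous part together with the small-jump martingale, and this is where the main work lies. I would approximate the small-jump martingale by truncating to jumps of size $|z|\in(1/n,1)$; for each fixed $n$ this region is bounded below, so the truncated process $Y_n$ again has only finitely many jumps on $[0,t]$, and the formula reduces on the inter-jump intervals to the classical continuous It\^{o} formula (yielding the drift term $\int_0^t\partial_i f(Y(s-))\,dY_c^i$ and the $[Y_c^i,Y_c^j]$ term) supplemented by the exact jump increments $f(Y_n(s-)+H(s,z))-f(Y_n(s-))$ summed over the jumps. To express these jump increments against the compensated measure $\tilde{N}$, I add and subtract the compensator, which splits the sum into the martingale integral against $\tilde{N}(ds,dz)$ and the correction $\int_0^t\int_{|z|<1}[f(Y(s-)+H)-f(Y(s-))-H^i\partial_i f(Y(s-))]\lambda(dz)\,ds$. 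The hard part is justifying the passage $n\to\infty$: the key estimate is the second-order Taylor bound $|f(y+H)-f(y)-H^i\partial_i f(y)|\le \frac12\|D^2f\|_\infty|H|^2$, valid since $f\in C^2$, which makes the compensator term finite because $|H|^2$ is integrable against $\lambda(dz)\,ds$ on $\{|z|<1\}$ by $H\in\mathbb{H}^2_\lambda([0,T]\times Z;E)$, and which, combined with the It\^{o} isometry for integrals against $\tilde{N}$ and the boundedness of $H$ on small jumps from the standing Assumption, yields $L^2$-convergence of the truncated martingale integrals to the claimed one. Assembling the continuous contribution, the big-jump sum, and the limiting small-jump compensated integral together with its compensator correction then gives the stated identity.
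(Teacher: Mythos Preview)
The paper does not actually give a proof of this lemma: it simply states ``For proof see Theorem 4.4.7 of \cite{Ap}.'' Your proposal is a correct outline of the standard argument (and indeed is essentially the route Applebaum takes): interlace the big jumps, apply the continuous It\^{o} formula on the inter-jump intervals, truncate the small jumps to $\{1/n<|z|<1\}$ so they too become finite sums, and pass to the limit using the second-order Taylor remainder and the $L^2$-isometry for $\tilde N$. One small caveat worth flagging: your bound $|f(y+H)-f(y)-H^i\partial_i f(y)|\le \tfrac12\|D^2 f\|_\infty |H|^2$ assumes $D^2 f$ is globally bounded, which $f\in C^2(\mathbb{R}^d)$ alone does not guarantee; the customary fix is to localize by a stopping time that keeps $Y$ in a compact set (so that only the local sup of $D^2 f$ is needed) and then let the localizing sequence tend to infinity. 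With that routine addition your sketch is complete.
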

\noindent For proof see Theorem 4.4.7 of \cite{Ap}.

\begin{definition}
Let $M$ be a Brownian integral with the drift of the form
$$M^i(t)=\int_0^tF^i_j(s)dB^j(s) + \int_0^tG^i(s)ds,$$
where each $F^i_j , (G^i)^{1/2}\in\mathrm{L}^2[0,T]$ for all
$t\geq0$, $1\leq i\leq d,$ $1\leq j\leq m.$

  For each $1\leq i\leq j,$ the \emph{quadratic variation process}, denoted as $([M^i,M^j](t), t\geq0),$ is defined by
$$[M^i,M^j](t) = \sum_{k=1}^m\int_0^tF^i_k(s)F^j_k(s)ds.$$
\end{definition}

\begin{lemma}(Burkholder's Inequality)
Let $M = (M(t), t\geq0)$ be a (real-valued) Brownian integral of the
form
$$M(t) = \int_0^tF^j(s)dB_j(s),$$
where each $F^j \in \mathrm{L}^2[0,T], 1\leq j\leq d, t\geq0.$ Let
$$[M,M](t) = \sum_{j=1}^m\int_0^tF_j(s)^2ds,$$
for each $t\geq0$. Then $M$ is a square-integrable martingale. Let
$\mathbb{E}([M,M](t)^{p/2}) < \infty,$ then for any $p\geq2$ there
exists a $C(p)>0$ such that, for each $t\geq0,$
$$\mathbb{E}\left(|M(t)|^p\right) \leq C(p)\mathbb{E}\left([M,M](t)^{p/2}\right).$$
\end{lemma}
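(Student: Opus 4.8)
The plan is to obtain the bound from the classical It\^{o} formula applied to $f(x)=|x|^{p}$, which for every $p\ge 2$ lies in $C^{2}(\mathbb{R})$ with $f'(x)=p|x|^{p-2}x$ and $f''(x)=p(p-1)|x|^{p-2}$. Since $M$ is a purely continuous one-dimensional Brownian integral, it is the special case of a L\'{e}vy-type integral \eqref{ito} with $G=0$ and no jump terms ($H=K=0$), and its quadratic variation is exactly $[M,M](t)=\sum_{j}\int_{0}^{t}F_{j}(s)^{2}\,ds$. Applying It\^{o}'s theorem 1 with all jump integrals absent and $M(s-)=M(s)$ (by continuity) gives the pathwise identity
\[
|M(t)|^{p}=\int_{0}^{t}p|M(s)|^{p-2}M(s)\,dM(s)+\frac{p(p-1)}{2}\int_{0}^{t}|M(s)|^{p-2}\,d[M,M](s).
\]
The square-integrability and martingale property of $M$ asserted in the statement follow from the It\^{o} isometry once $\mathbb{E}[M,M](t)<\infty$, so I treat them as given.

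Before taking expectations I would localize, since a priori neither $\mathbb{E}|M(t)|^{p}$ nor the mean of the stochastic integral is under control. Define the stopping times $\tau_{n}=\inf\{s\ge 0:\ |M(s)|\ge n\ \text{or}\ [M,M](s)\ge n\}$ and replace $M$ by the stopped martingale $M(\cdot\wedge\tau_{n})$. On $[0,\tau_{n}]$ the integrand $p|M|^{p-2}M$ is bounded, so the stochastic integral is a true martingale of zero mean, and taking expectations in the stopped identity yields
\[
\Phi_{n}:=\mathbb{E}|M(t\wedge\tau_{n})|^{p}=\frac{p(p-1)}{2}\,\mathbb{E}\int_{0}^{t\wedge\tau_{n}}|M(s)|^{p-2}\,d[M,M](s),
\]
where every term is finite because $\Phi_{n}\le n^{p}$.

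Next I would estimate the right-hand side. Writing $M^{*}_{n}=\sup_{0\le s\le t\wedge\tau_{n}}|M(s)|$ and bounding $|M(s)|^{p-2}\le (M^{*}_{n})^{p-2}$ inside the $d[M,M]$-integral, then applying H\"{o}lder's inequality with the conjugate exponents $\tfrac{p}{p-2}$ and $\tfrac{p}{2}$, I get
\[
\Phi_{n}\le \frac{p(p-1)}{2}\left(\mathbb{E}(M^{*}_{n})^{p}\right)^{\frac{p-2}{p}}\left(\mathbb{E}[M,M](t\wedge\tau_{n})^{p/2}\right)^{\frac{2}{p}}.
\]
Doob's $L^{p}$ maximal inequality, valid since $|M(\cdot\wedge\tau_{n})|$ is a nonnegative submartingale and $p>1$, gives $\mathbb{E}(M^{*}_{n})^{p}\le\left(\tfrac{p}{p-1}\right)^{p}\Phi_{n}$. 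Substituting, the bound becomes the self-referential inequality $\Phi_{n}\le C_{1}(p)\,\Phi_{n}^{(p-2)/p}\left(\mathbb{E}[M,M](t\wedge\tau_{n})^{p/2}\right)^{2/p}$ with $C_{1}(p)=\frac{p(p-1)}{2}\left(\tfrac{p}{p-1}\right)^{p-2}$.

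Because $\Phi_{n}\le n^{p}<\infty$, the division by $\Phi_{n}^{(p-2)/p}$ is legitimate; doing so and raising to the power $p/2$ yields $\Phi_{n}\le C(p)\,\mathbb{E}[M,M](t\wedge\tau_{n})^{p/2}\le C(p)\,\mathbb{E}[M,M](t)^{p/2}$ with the explicit constant $C(p)=\left[\frac{p(p-1)}{2}\left(\tfrac{p}{p-1}\right)^{p-2}\right]^{p/2}$, which does not depend on $n$. Since $M(t\wedge\tau_{n})\to M(t)$ almost surely as $\tau_{n}\uparrow\infty$, Fatou's lemma transfers the bound to $\mathbb{E}|M(t)|^{p}\le C(p)\,\mathbb{E}[M,M](t)^{p/2}$, which is the claim (trivially true if the right-hand side is infinite). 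The main obstacle is exactly this self-referential step: one cannot divide by $\Phi^{(p-2)/p}$ without first knowing $\Phi<\infty$, so the estimate must be run on the stopped process where finiteness is automatic and the constant $C(p)$ is $n$-independent, the uniformity of $C(p)$ being precisely what makes the passage to the limit meaningful.
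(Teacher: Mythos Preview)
Your proof is correct and follows the classical route: apply It\^{o}'s formula to $|x|^{p}$, localize via stopping times so the stochastic integral has zero mean, pull out the running supremum, use H\"{o}lder with exponents $\tfrac{p}{p-2}$ and $\tfrac{p}{2}$, close the loop with Doob's $L^{p}$ maximal inequality, and pass to the limit by Fatou. The self-referential step is handled correctly by working with the stopped process so that $\Phi_{n}<\infty$ and the division is legitimate; you are also right that the constant $C(p)$ is independent of $n$, which is what permits the limit.

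As for comparison: the paper does not actually prove this lemma. It simply records the statement and writes ``For proof see Theorem 4.4.21 of \cite{Ap}.'' The argument you supplied is precisely the standard proof one finds in Applebaum's book at that reference, so your approach is not merely consistent with the paper's citation---it \emph{is} the cited proof. There is nothing to contrast; you have filled in what the paper deferred.
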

\noindent For proof see Theorem 4.4.21 of \cite{Ap}.

\begin{lemma}(Burkholder-Davis-Gundy Inequality)
For every $p \geq 1,$ there is a constant $C_p \in (0,\infty)$ such
that for any real-valued square integrable c\`{a}dl\`{a}g martingale
$M$ with $M_0 = 0$, and for any $T \geq 0$,
$$C_p^{-1}\mathbb{E}[M,M]_T^{p/2} \leq \mathbb{E}\sup_{0\leq t\leq T}|M_t|^p \leq C_p\mathbb{E}[M,M]_T^{p/2}.$$
\end{lemma}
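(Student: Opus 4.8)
The plan is to compare the two nonnegative random variables $M^*_T:=\sup_{0\le t\le T}|M_t|$ and $A_T^{1/2}:=[M,M]_T^{1/2}$ by the good-$\lambda$ method, and then to convert the resulting distributional comparison into the stated $L^p$ bounds for every $p\ge1$ at once. First I would record the good-$\lambda$ domination lemma: if $X,Y\ge0$ satisfy $P(X>\beta\lambda,\,Y\le\delta\lambda)\le\gamma(\delta)\,P(X>\lambda)$ for all $\lambda>0$ and all small $\delta$, with a fixed $\beta>1$ and $\gamma(\delta)\to0$ as $\delta\to0$, then $\mathbb{E}X^p\le C_p\,\mathbb{E}Y^p$ whenever a finiteness reduction is available; this is purely measure-theoretic, obtained by integrating the tail functions against $p\lambda^{p-1}\,d\lambda$ and then choosing $\delta$ small. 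The theorem then reduces to proving two good-$\lambda$ inequalities, one with $(X,Y)=(M^*_T,A_T^{1/2})$, giving the right-hand bound, and one with $(X,Y)=(A_T^{1/2},M^*_T)$, giving the left-hand bound.

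To begin rigorously I would localize by $\tau_n=\inf\{t:|M_t|\vee[M,M]_t>n\}$, so every expectation below is finite: since $M$ is square integrable, $M_t^2-[M,M]_t$ is a martingale, whence $\mathbb{E}[M,M]_{T\wedge\tau_n}=\mathbb{E}M_{T\wedge\tau_n}^2<\infty$, and Doob's inequality gives $\mathbb{E}((M^*_{T\wedge\tau_n})^2)<\infty$; the inequalities for $M$ follow from those for $M^{\tau_n}$ by monotone convergence as $n\to\infty$. For the first good-$\lambda$ inequality I fix $\beta>1$, $\delta>0$ and introduce the passage times $R=\inf\{t:|M_t|>\lambda\}$, $S=\inf\{t:|M_t|>\beta\lambda\}$ and $U=\inf\{t:[M,M]_t>\delta^2\lambda^2\}$. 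On $\{M^*_T>\beta\lambda,\ A_T^{1/2}\le\delta\lambda\}$ we have $R\le S\le T\le U$, and I would apply the $L^2$ maximal inequality, conditionally on $\mathcal{F}_R$, to the stopped increment $M^{S\wedge U}-M^{R\wedge U}$, whose bracket is at most $\delta^2\lambda^2$; since $\{R\le T\}=\{M^*_T>\lambda\}$, integrating the conditional bound over this set yields $P(M^*_T>\beta\lambda,\ A_T^{1/2}\le\delta\lambda)\le C(\beta-1-c\delta)^{-2}\delta^2\,P(M^*_T>\lambda)$, which has the required form. The reverse good-$\lambda$ inequality is symmetric, now using the conditional identity $\mathbb{E}[(M_{S'\wedge U'}-M_{R'\wedge U'})^2\mid\mathcal{F}_{R'}]=\mathbb{E}[[M,M]_{S'\wedge U'}-[M,M]_{R'\wedge U'}\mid\mathcal{F}_{R'}]$ in place of Doob's inequality, with the roles of the bracket and the maximal function interchanged.

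The main obstacle, and the point where the continuous-martingale argument must be repaired, is the presence of jumps: a c\`adl\`ag martingale can leap across a threshold, so the passage endpoints carry uncontrolled overshoots and the bracket of a stopped increment can be inflated by a single large jump through $\Delta[M,M]_t=(\Delta M_t)^2$. The device that rescues the argument is to exploit the very constraint defining the event: on $\{A_T^{1/2}\le\delta\lambda\}$ every jump before $T$ satisfies $|\Delta M_s|^2=\Delta[M,M]_s\le[M,M]_T\le\delta^2\lambda^2$, so all jumps are bounded by $\delta\lambda$; likewise, on $\{M^*_T\le\delta\lambda\}$ one has $|\Delta M_s|\le 2M^*_T\le2\delta\lambda$. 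Hence on each relevant event the overshoot at the passage times is at most of order $\delta\lambda$, which is exactly why the effective gap $(\beta-1)$ degrades only to $(\beta-1-c\delta)$; choosing $\delta$ small keeps this positive and preserves the good-$\lambda$ mechanism.

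Finally I would assemble the pieces: feeding the two good-$\lambda$ inequalities into the domination lemma produces constants $C_p^{(1)},C_p^{(2)}$ with $\mathbb{E}((M^*_T)^p)\le C_p^{(1)}\mathbb{E}[M,M]_T^{p/2}$ and $\mathbb{E}[M,M]_T^{p/2}\le C_p^{(2)}\mathbb{E}((M^*_T)^p)$ for the localized martingales, and letting $n\to\infty$ gives the claim for $M$ with $C_p=\max(C_p^{(1)},C_p^{(2)})$, uniformly for all $p\ge1$. I would remark that for $p\ge2$ one can obtain the right-hand bound more directly from It\^{o}'s formula applied to $x\mapsto|x|^p$ together with the Burkholder inequality already recorded above, but that the good-$\lambda$ route is what delivers the full range $p\ge1$ and both directions simultaneously.
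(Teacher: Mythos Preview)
Your proposal is a correct and well-structured outline of the good-$\lambda$ proof of the Burkholder--Davis--Gundy inequality for c\`adl\`ag martingales; in particular, your handling of the jump overshoots by bounding $|\Delta M_s|$ via the very constraint defining the good-$\lambda$ event is the right device, and the localization by $\tau_n$ together with monotone convergence closes the argument cleanly.

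However, the paper does not prove this lemma at all: it simply records the statement and refers the reader to Theorem~3.50 of Peszat and Zabczyk~\cite{PZ}. So your approach is not so much \emph{different} from the paper's as it is \emph{more}: you supply a genuine proof where the paper cites a textbook. What your route buys is self-containment and an explanation of why the result holds uniformly for $p\ge1$ in the presence of jumps; what the paper's choice buys is economy, since BDG is a standard tool here rather than a contribution of the paper. If you intend to include a proof, your sketch is sound; if you are matching the paper's level of detail, a one-line citation would suffice.
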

\noindent For proof see Theorem 3.50 of \cite{PZ}

\begin{definition}
Let $g : \mathbb{R}^+ \times Z \times \Omega \rightarrow F$ be
given. A sequence $\{g_n\}_{n\in\mathbb{N}}$ of $F_T/\mathcal{B}(F)$
measurable functions is $\mathrm{L}^p$- approximating $g$ on $(0,T]
\times Z \times \Omega$ w.r.t. $\lambda \otimes \mathrm{P}$, if
$g_n$ is $\lambda \otimes \mathrm{P}$-a.s. converging to $g$, when
$n \rightarrow \infty$, and
$$\lim_{n \rightarrow \infty}\int_0^T\int_Z\mathbb{E}\big[\|g_n(t,z,\omega) - g(t,z,\omega)\|^p\big]\d \lambda = 0,$$
i.e., $\|g_n-g\|$ converges to zero in $\mathrm{L}^p((0,T] \times Z
\times \Omega, \lambda \otimes \mathrm{P}),$ when $n \rightarrow
\infty$.
\end{definition}

\begin{definition}
Let $p\geq1, T>0$. We say that $g$ is strong $p$-integrable on
$(0,T]\times Z, Z\in\mathcal{B}(E),$ if there exists a sequence
$\{g_n\}_{n\in N} \in \Sigma(E)$ of simple functions, such that
$g_n$ is $ \mathrm{L}^p$-approximating $g$ on $(0,T]\times Z\times
\Omega$ w.r.t. $\lambda\otimes P,$ and for any such sequence the
limit of the natural integrals of $g_n$ w.r.t. $\tilde{N}(dt,dz)$
exists in $\mathrm{L}^p(\Omega, \mathcal{F},P)$ for
$n\rightarrow\infty,$ i.e.,
\begin{align}\label{stro}
\int_0^T\int_Zg(t,z,\omega)\tilde{N}(dt,dz)(\omega) =
\lim_{n\rightarrow\infty}^p\int_0^T\int_Zg_n(t,z,\omega)\tilde{N}(dt,dz)(\omega)
\end{align}
exists. Moreover, the limit \eqref{stro} does not depend on the
sequence $\{g_n\}_{n\in\mathbb{N}}\in\Sigma(E),$ which is
$\mathrm{L}^p$-approximating $g$ on $(0,T]\times Z\times \Omega$
w.r.t. $\lambda\otimes P$. We call the limit in \eqref{stro} the
strong $p$-integral of $g$ w.r.t. $\tilde{N}(dt,dz)$ on $(0,T]\times
Z.$
\end{definition}

\begin{lemma}
Let $p\geq1.$ Let g be strong $p$ -integrable on $(0,T]\times Z,
Z\in\mathcal{B}(E).$ Then the strong p-integral
$\int_0^t\int_Zg(s,z,\omega)\tilde{N}(ds,dz)(\omega),$ $t\in[0,T],$
is an $\mathcal{F}_t$-martingale with mean zero.
\end{lemma}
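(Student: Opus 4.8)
The plan is to prove the statement first for simple integrands, where the integral is a finite sum whose distribution is controlled by the Poisson structure, and then to transport the mean-zero and martingale properties to an arbitrary strong $p$-integrable $g$ by passing to the $L^p$-limit that defines the strong integral. By the definition of the strong $p$-integral there is a sequence $\{g_n\}\subset\Sigma(E)$ of simple functions $L^p$-approximating $g$ on $(0,T]\times Z\times\Omega$ with respect to $\lambda\otimes P$, and for each fixed $t\in[0,T]$ the natural integrals
$$I_n(t):=\int_0^t\int_Z g_n(s,z,\omega)\tilde{N}(ds,dz)$$
converge in $L^p(\Omega,\mathcal{F},P)$ to $I(t):=\int_0^t\int_Z g(s,z,\omega)\tilde{N}(ds,dz)$, the limit being independent of the approximating sequence. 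Thus it suffices to verify the two properties for each $I_n$ and to check that they survive the passage to the limit.

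\emph{Simple integrands.} By linearity I may take a single building block $g_n(s,z,\omega)=a\,\chi_{(r,r']}(s)\,\chi_A(z)\,\chi_\Gamma(\omega)$, where $a\in F$, the set $A\in\mathcal{B}(E)$ is bounded below, and $\Gamma\in\mathcal{F}_r$ (adaptedness of a simple integrand forces the $\omega$-factor to be $\mathcal{F}_r$-measurable). Writing $M_u:=\tilde{N}((r,u]\times A)=N((r,u]\times A)-(u-r)\lambda(A)$ for $u\geq r$ and $M_u:=0$ for $u\leq r$, the lemma stating that $(N(t,Z),t\geq0)$ is a Poisson process of intensity $\lambda(Z)$ when $Z$ is bounded below shows that $(M_u)_{u\geq r}$ has mean zero and increments independent of $\mathcal{F}_r$; in particular $u\mapsto M_u$ is an $\mathcal{F}_u$-martingale. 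The natural integral is $I_n(t)=a\,\chi_\Gamma\,M_{r'\wedge t}$. Taking expectations and using the independence of $M_{r'\wedge t}$ from $\Gamma\in\mathcal{F}_r$ gives $E[I_n(t)]=a\,E[\chi_\Gamma]\,E[M_{r'\wedge t}]=0$. For $0\leq s<t$, the tower property together with the $\mathcal{F}_r$-measurability of $\chi_\Gamma$ and the martingale property of $M$ yields
$$E[I_n(t)\mid\mathcal{F}_s]=a\,E\big[\chi_\Gamma\,M_{r'\wedge t}\mid\mathcal{F}_s\big]=a\,\chi_\Gamma\,M_{r'\wedge s}=I_n(s),$$
where one distinguishes the cases $s\leq r$, $r<s<r'$, $s\geq r'$, conditioning first on $\mathcal{F}_r$ in the case $s\leq r$. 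Summation over the finitely many blocks shows that every simple integral $I_n$ is a mean-zero $\mathcal{F}_t$-martingale.

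\emph{Passage to the limit.} Since $P$ is a probability measure and $p\geq1$, one has $\|\cdot\|_{L^1(\Omega)}\leq\|\cdot\|_{L^p(\Omega)}$, so $I_n(t)\to I(t)$ in $L^1(\Omega)$ as well; hence $E[I(t)]=\lim_{n\to\infty}E[I_n(t)]=0$, which is the mean-zero property. For the martingale property, fix $0\leq s<t$. Conditional expectation $E[\,\cdot\mid\mathcal{F}_s]$ is a contraction on $L^p(\Omega)$ by Jensen's inequality, so $E[I_n(t)\mid\mathcal{F}_s]\to E[I(t)\mid\mathcal{F}_s]$ in $L^p(\Omega)$; on the other hand $E[I_n(t)\mid\mathcal{F}_s]=I_n(s)\to I(s)$ in $L^p(\Omega)$ by the defining convergence of the strong integral at time $s$. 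By uniqueness of $L^p$-limits, $E[I(t)\mid\mathcal{F}_s]=I(s)$ almost surely. Finally, $I(t)$ is the $L^p$-limit of the $\mathcal{F}_t$-measurable random variables $I_n(t)$, so a subsequence converges almost surely and $I(t)$ is $\mathcal{F}_t$-measurable, while $I(t)\in L^p(\Omega)\subseteq L^1(\Omega)$ gives integrability.

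\emph{Main obstacle.} The only genuinely computational point is the simple-integrand case: verifying that $u\mapsto\tilde{N}((r,u]\times A)$ has mean zero and increments independent of the past, which is precisely where the compensator $(u-r)\lambda(A)$ and the cited Poisson law enter, together with the bookkeeping of the position of $s$ relative to $(r,r']$. Everything afterward is soft, resting only on the embedding $L^p(\Omega)\hookrightarrow L^1(\Omega)$ on a probability space and the $L^p$-contractivity of conditional expectation.
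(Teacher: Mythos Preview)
The paper does not supply its own proof of this lemma; it simply refers the reader to Theorem~4.10 of R\"udiger~\cite{Ru}. Your argument---establishing the mean-zero martingale property for elementary integrands directly from the compensated Poisson structure, and then pushing it through the defining $L^p$-limit via the $L^p\hookrightarrow L^1$ embedding and the $L^p$-contractivity of conditional expectation---is exactly the standard route, and is in essence the proof given in the cited reference. One small point worth making explicit: the definition of the strong $p$-integral in the paper is stated only for the full interval $(0,T]$, so when you invoke ``the defining convergence of the strong integral at time $s$'' you are implicitly using that the restriction of an $L^p$-approximating sequence to $(0,t]$ is again $L^p$-approximating on $(0,t]\times Z$ (immediate from the integral definition of the approximation), and that the natural integrals $I_n(t)$ are then Cauchy in $L^p$; this is routine but deserves a sentence.
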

\noindent For proof see Theorem 4.10 of \cite{Ru}.

\begin{lemma}\label{le1}
Let $f\in \mathrm{L}^1([0,T],E)$; then $f$ is strong 1-integrable
w.r.t $\tilde{N}(dt,dz)$ on $(0,T]\times Z$, for any $0<t \leq T$,
$Z\in \mathcal{B}(E)$. Moreover
$$\mathbb{E}\left[\left\|\int_0^t\int_Z
f(s,z,\omega)\tilde{N}(ds,dz)\right\|\right]\leq 2 \int_0^t\int_Z
\mathbb{E}\left[\|f(s,z,\omega)\|\right]\lambda(dz)ds$$
\end{lemma}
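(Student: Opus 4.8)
The plan is the standard extension argument for integrals against a compensated random measure: first prove the displayed bound (with constant $2$) when the integrand is a simple predictable function, then use that bound to show the strong $1$-integral of $f$ exists as an $L^1(\Omega,\mathcal{F},P)$-limit and is independent of the approximating sequence, and finally pass to the limit to recover the estimate for $f$ itself. Throughout, "$f\in L^1$" is read in the sense that makes the right-hand side of the assertion meaningful, namely $\int_0^T\int_Z\mathbb{E}\|f(s,z,\cdot)\|\,\lambda(dz)\,ds<\infty$.

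First I would treat a simple integrand $g=\sum_{k=1}^{n}a_k\,\chi_{(t_{k-1},t_k]}\chi_{A_k}\chi_{F_k}\in\Sigma(E)$, with $a_k\in E$, the sets $A_k\subset Z$ of finite $\lambda$-measure, and $F_k\in\mathcal{F}_{t_{k-1}}$. Its natural integral is the a.s.\ finite sum
$$\int_0^t\int_Z g\,\tilde N(ds,dz)=\sum_{k=1}^{n}a_k\chi_{F_k}\,\tilde N\big((t_{k-1}\wedge t,t_k\wedge t]\times A_k\big),$$
and since $\tilde N=N-ds\,\lambda(dz)$ with $N$ and $ds\,\lambda(dz)$ both nonnegative and finite on the sets involved, the triangle inequality gives
$$\Big\|\int_0^t\int_Z g\,\tilde N\Big\|\le\int_0^t\int_Z\|g\|\,N(ds,dz)+\int_0^t\int_Z\|g\|\,\lambda(dz)\,ds.$$
Taking expectations, the compensator term already equals $\int_0^t\int_Z\mathbb{E}\|g\|\,\lambda(dz)\,ds$ by Tonelli's theorem. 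For the $N$-term I would use that, for $A$ of finite $\lambda$-measure, $(N(t,A))_{t\ge 0}$ is a Poisson process of intensity $\lambda(A)$ (the lemma quoted above) and that the increment $N((t_{k-1}\wedge t,t_k\wedge t]\times A_k)$ is independent of $\mathcal{F}_{t_{k-1}}\ni F_k$ by the independence of the increments of the L\'evy process; hence $\mathbb{E}\big[\chi_{F_k}N((t_{k-1}\wedge t,t_k\wedge t]\times A_k)\big]=P(F_k)\,(t_k\wedge t-t_{k-1}\wedge t)\,\lambda(A_k)$, and summing over $k$ yields $\mathbb{E}\int_0^t\int_Z\|g\|\,N(ds,dz)=\int_0^t\int_Z\mathbb{E}\|g\|\,\lambda(dz)\,ds$. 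This establishes the bound with constant $2$ for $g\in\Sigma(E)$.

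Next, by density of simple functions in $L^1$ there is a sequence $\{g_n\}\subset\Sigma(E)$ that is $L^1$-approximating $f$ on $(0,T]\times Z\times\Omega$ w.r.t.\ $\lambda\otimes P$. Applying the Step-one estimate to $g_n-g_m\in\Sigma(E)$ (using linearity of the natural integral) gives
$$\mathbb{E}\Big\|\int_0^t\int_Z g_n\,\tilde N-\int_0^t\int_Z g_m\,\tilde N\Big\|\le 2\int_0^T\int_Z\mathbb{E}\|g_n-g_m\|\,\lambda(dz)\,ds\longrightarrow 0,$$
so $\big(\int_0^t\int_Z g_n\,\tilde N\big)_{n}$ is Cauchy in $L^1(\Omega,\mathcal{F},P)$ and converges; interlacing two approximating sequences shows the limit does not depend on the chosen sequence. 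By the definition of strong $p$-integrability, $f$ is strong $1$-integrable on $(0,T]\times Z$ and this limit is $\int_0^t\int_Z f\,\tilde N(ds,dz)$. Finally I would pass to the limit in the inequality: from $\big|\mathbb{E}\|X_n\|-\mathbb{E}\|X\|\big|\le\mathbb{E}\|X_n-X\|$ and the $L^1(\Omega)$-convergence of $\int g_n\,\tilde N$ to $\int f\,\tilde N$ we get convergence of the left-hand sides, while $\int_0^T\int_Z\mathbb{E}\|g_n-f\|\,\lambda(dz)\,ds\to0$ gives convergence of the right-hand sides to $2\int_0^t\int_Z\mathbb{E}\|f\|\,\lambda(dz)\,ds$, which is the claimed bound. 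The only genuinely delicate point is the Campbell-type identity $\mathbb{E}\int_0^t\int_Z\|g\|\,N(ds,dz)=\int_0^t\int_Z\mathbb{E}\|g\|\,\lambda(dz)\,ds$ for simple predictable $g$, which rests on the Poisson-process property of $N(\cdot,A)$ together with the independence of its increments from the past; everything else is routine bookkeeping. (If instead $f$ is read as a deterministic element of $L^1([0,T];E)$ independent of $z$ and $\omega$, the identical argument applies with $\Sigma(E)$ replaced by deterministic simple functions and $\mathbb{E}\|f\|=\|f\|$.)
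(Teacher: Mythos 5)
Your argument is correct. Note that the paper itself does not prove this lemma at all: it simply quotes it as Theorem 4.12 of R\"{u}diger \cite{Ru}, so there is no internal proof to compare against. What you have written is essentially the standard proof given in that reference: establish the bound with constant $2$ for predictable simple functions by splitting $\tilde{N}=N-ds\,\lambda(dz)$, using the triangle inequality, Tonelli for the compensator part, and the Campbell-type identity $\mathbb{E}\int_0^t\int_Z\|g\|\,N(ds,dz)=\int_0^t\int_Z\mathbb{E}\|g\|\,\lambda(dz)\,ds$ (which rests, as you say, on $N(\cdot,A)$ being a Poisson process of intensity $\lambda(A)$ with increments independent of the past, so that $\chi_{F_k}$ decouples); then extend by an $L^1(\Omega)$-Cauchy argument and pass to the limit in the inequality, with the interlacing trick giving independence of the approximating sequence as required by the definition of strong $1$-integrability. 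The only step you state without justification is the density of \emph{adapted} simple functions of $\Sigma(E)$ in the space of adapted integrands with $\int_0^T\int_Z\mathbb{E}\|f\|\,\lambda(dz)\,dt<\infty$; this is standard but is precisely the measurability bookkeeping the cited source carries out, so it deserves at least a remark. Your reading of the hypothesis ``$f\in L^1$'' in the sense that makes the right-hand side finite is also the intended one.
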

\noindent For proof see Theorem 4.12 of \cite{Ru}.

\begin{lemma}
Suppose that $(F,\mathcal{B}(F)) = (H,\mathcal{B}(H))$ is a
separable Hilbert space. Let $g\in \mathbb{H}^2_\lambda(E),$ then
$g$ is strong $2$-integrable w.r.t. $\tilde{N}(dt,dz)$ on
$(0,T]\times Z,$
 for any $0 < t \leq T,$ $Z\in\mathcal{B}(E)$. Moreover
\begin{align}
\mathbb{E}\left[\Big\|\int_0^t\int_Zg(s,z,\omega)\tilde{N}(ds,dz)(\omega)\Big\|^2\right]
=
\int_0^t\int_Z\mathbb{E}\left[\|g(s,z,\omega\|^2\right]\lambda(ds)dz
\end{align}
\end{lemma}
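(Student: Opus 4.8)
The plan is to follow the standard two-step construction of a Hilbert-space-valued stochastic integral: first establish the identity for simple (elementary) integrands, for which the compensated-Poisson-measure integral is a finite sum that can be handled by hand, and then pass to a general $g\in\mathbb{H}^2_\lambda$ by an $\mathrm{L}^2$-density argument, in exactly the spirit of the definitions of $\mathrm{L}^p$-approximation and strong $p$-integrability recalled above. Concretely, three things have to be checked to match the definition of the strong $2$-integral: (a) $g$ can be $\mathrm{L}^2$-approximated on $(0,t]\times Z\times\Omega$ by simple $F_T/\mathcal{B}(H)$-measurable and $\mathcal{F}_t$-adapted functions; (b) the natural integrals of any such approximating sequence converge in $\mathrm{L}^2(\Omega,\mathcal{F},P;H)$, with a limit independent of the chosen sequence; and (c) the claimed It\^o isometry.

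\textbf{Step 1: simple integrands.} Assume first that $Z$ is bounded below and take $g(s,z,\omega)=\sum_{k=1}^{m}\mathbf{1}_{(t_{k-1},t_k]}(s)\,\mathbf{1}_{Z_k}(z)\,\xi_k(\omega)$, where $0=t_0<\dots<t_m\le t$, the $Z_k\in\mathcal{B}(E)$ are disjoint and bounded below, and each $\xi_k$ is a bounded $H$-valued $\mathcal{F}_{t_{k-1}}$-measurable random variable. Its natural integral is $I(g)=\sum_{k=1}^m \xi_k\,\tilde N((t_{k-1},t_k]\times Z_k)$, which lies in $\mathrm{L}^2(\Omega;H)$ since each $N(\cdot,Z_k)$ is a Poisson process of intensity $\lambda(Z_k)$. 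Expanding $\|I(g)\|^2=\sum_{k,l}(\xi_k,\xi_l)_H\,\tilde N((t_{k-1},t_k]\times Z_k)\,\tilde N((t_{l-1},t_l]\times Z_l)$ and taking expectations, the cross terms vanish: if the time intervals are disjoint, condition on the $\sigma$-algebra of the earlier block and use that $\tilde N$ on the later block has conditional mean zero and is independent of the past (stationary independent increments); if $k=l$ but $Z_k\cap Z_l=\emptyset$, use that disjoint space-time blocks of $N$ are independent and centred. The diagonal terms, after conditioning on $\mathcal{F}_{t_{k-1}}$ and using $\mathrm{Var}\,\tilde N((t_{k-1},t_k]\times Z_k)=(t_k-t_{k-1})\lambda(Z_k)$, give $\mathbb{E}\|I(g)\|^2=\sum_k \mathbb{E}\|\xi_k\|_H^2\,(t_k-t_{k-1})\lambda(Z_k)=\int_0^t\!\!\int_Z\mathbb{E}\|g(s,z,\cdot)\|_H^2\,\lambda(dz)\,ds$, which is the isometry for simple $g$.

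\textbf{Step 2: approximation and passage to the limit.} For a general $g\in\mathbb{H}^2_\lambda([0,T]\times Z;H)$, I first reduce to $Z$ bounded below: since $\lambda$ is $\sigma$-finite and $\int_0^t\!\!\int_Z\mathbb{E}\|g\|_H^2\lambda(dz)\,ds<\infty$, the truncations $g\,\mathbf{1}_{\{|z|>1/n\}}$ converge to $g$ in $\mathrm{L}^2((0,t]\times Z\times\Omega;\,ds\otimes\lambda\otimes P)$. On a set bounded below, a standard measure-theoretic construction — bound and truncate $g$, approximate the adapted $H$-valued integrand in time by step functions built from conditional expectations, then discretise in $z$ — yields simple functions $g_n$ that are $\mathrm{L}^2$-approximating $g$ in the sense recalled above. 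By linearity of the natural integral and Step 1 applied to $g_n-g_m$, $\mathbb{E}\|I(g_n)-I(g_m)\|^2=\int_0^t\!\!\int_Z\mathbb{E}\|g_n-g_m\|_H^2\lambda(dz)\,ds\to0$, so $(I(g_n))$ is Cauchy in $\mathrm{L}^2(\Omega;H)$; its limit is, by definition, $\int_0^t\!\!\int_Z g\,\tilde N(ds,dz)$. Applying the same isometry to $g_n-g_n'$ shows this limit is independent of the approximating sequence, so the strong $2$-integral is well defined. Finally, letting $n\to\infty$ in $\mathbb{E}\|I(g_n)\|^2=\int_0^t\!\!\int_Z\mathbb{E}\|g_n\|_H^2\lambda(dz)\,ds$ — the left side by $\mathrm{L}^2(\Omega;H)$-convergence of $I(g_n)$ and continuity of the norm, the right side by $\mathrm{L}^2$-convergence of $g_n$ — gives the asserted identity.

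The analytically routine but technically fussy part is Step 2: producing the $\mathrm{L}^2$-approximating sequence of \emph{simple, adapted, $H$-valued} functions and carrying the measurability and predictability bookkeeping through, together with the bounded-below truncation that is forced because the natural integral of a simple function is defined only over spatially bounded-below sets. Granting the approximation machinery and the Poisson-measure facts recorded in Section 2, the isometry itself is immediate from the computation in Step 1. Alternatively, exactly as with the two preceding lemmas, the statement may simply be quoted from R\"udiger \cite{Ru}.
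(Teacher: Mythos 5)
Your proof is essentially correct, but it is worth pointing out that the paper itself does not prove this lemma at all: like the two lemmas preceding it, it is simply quoted, with the one-line reference ``For proof see Theorem 4.14 of \cite{Ru}'' (R\"udiger's result on strong $2$-integrability with respect to compensated Poisson random measures on separable Banach spaces, specialised here to $F=H$ a Hilbert space). Your last sentence anticipates exactly this. What you supply instead is the classical self-contained construction: the It\^o isometry for simple adapted integrands via independence and mean-zero properties of $\tilde N$ on disjoint space--time blocks, followed by an $\mathrm{L}^2$-density argument, Cauchy-ness of the natural integrals, independence of the limit from the approximating sequence (which is genuinely required by the paper's definition of the strong $p$-integral, and which you correctly verify via the isometry applied to $g_n-g_n'$), and passage to the limit in the isometry. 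This buys self-containedness and makes transparent why the Hilbert-space structure matters (the expansion of $\|I(g)\|^2$ through the inner product), whereas the citation route buys brevity and inherits R\"udiger's more careful measurability bookkeeping. Two small technical remarks on your Step 2: since $\lambda$ here is only assumed $\sigma$-finite on $(Z,\mathcal{B}(Z))$ and not necessarily a L\'evy measure on $\mathbb{R}^d\setminus\{0\}$, the reduction should be phrased via an exhaustion by sets of finite $\lambda$-measure rather than by the sets $\{|z|>1/n\}$; and in your Step 1 parametrisation each time interval carries a single $Z_k$, so the case ``$k=l$ with $Z_k\cap Z_l=\emptyset$'' only arises for genuinely general simple functions with sums over both time and space indices --- harmless, but the bookkeeping should match the class of simple functions you actually approximate with. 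Neither point is a gap in substance.
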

\noindent For proof see Theorem 4.14 of \cite{Ru}.

\section{The Stochastic GOY Model of Turbulence}
\setcounter{equation}{0} The GOY model
(Gledger-Ohkitani-Yamada)~\cite{OY} is a particular case of so
called ``Shell model" (see, Frisch \cite{Fu}). This model is the
Navier-Stokes equation written in the Fourier space where the
interaction between different modes is preserved between nearest
modes. To be precise, the GOY model describes a one-dimensional
cascade of energies among an infinite sequence of complex
velocities, $\{u_n(t)\}$, on a one dimensional sequence of wave
numbers
$$k_n = k_0 2^n,\quad k_0 > 0,\ n=1, 2, \ldots$$
where the discrete index $n$ is referred to as the ``shell index".
The equations of motion of the GOY model of turbulence have the form
\begin{align}\label{goy}
\frac{\d u_n}{\d t} + \nu k_n^2 u_n &+ i\big(a k_n
u\s_{n+1}u\s_{n+2} + b k_{n-1}u\s_{n-1}u\s_{n+1} + \nonumber\\ &
+ck_{n-2} u\s_{n-1}u\s_{n-2}\big) = f_n, \quad\text{for}\ n= 1, 2,
\ldots,
\end{align}
along with the boundary conditions
\begin{equation}\label{bc}
u_{-1} = u_0 = 0.
\end{equation}
Here $u\s_n$ denotes the complex conjugate of $u_n$, $\nu > 0$ is
the kinematic viscosity and $f_n$ is the Fourier component of the
forcing. $a, b$ and $c$ are real parameters such that energy
conservation condition $a + b + c =0$ holds (see Kadanoff, Lohse,
Wang, and Benzi\cite{Ka}; Ohkitani and Yamada\cite{OY}).

\subsection{Functional Setting}
Let $H$ be a real Hilbert space such that
\begin{equation*}
H :=\left\{u=(u_1, u_2, \ldots) \in \mathbb{C}^{\infty}:
\sum_{n=1}^{\infty} |u_n|^2 < \infty\right\}.
\end{equation*}
For every $u, v \in H$, the scalar product $(\cdot,\cdot)$ and norm
$|\cdot|$ are defined on $H$ as
$$(u, v)_H = Re\ \sum_{n=1}^{\infty} u_n v\s_n, \quad |u| = \left(\sum_{n=1}^{\infty} |u_n|^2\right)^{1/2}.$$
Let us now define the space
$$V :=\left\{u\in H: \sum_{n=1}^{\infty} k_n^2|u_n|^2 < \infty\right\},$$
which is a Hilbert space equipped with the norm
$$\|u\| = \left(\sum_{n=1}^{\infty} k_n^2|u_n|^2\right)^{1/2}.$$
The linear operator $A: D(A) \rightarrow H$ is a positive definite,
self adjoint linear operator defined by
\begin{equation}\label{A}
Au=((Au)_1, (Au)_2, \ldots), \ \text{where}\ (Au)_n = k_n^2 u_n,
\quad\forall u\in D(A).
\end{equation}
The domain of $A$, $D(A) \subset H$, is a Hilbert space equipped
with the norm
$$\|u\|_{D(A)} = |Au| = \left(\sum_{n=1}^{\infty} k_n^4|u_n|^2\right)^{1/2}, \quad\forall u\in D(A).$$
Since the operator $A$ is positive definite, we can define the power
$A^{1/2}$ ,
$$A^{1/2}u = (k_1 u_1, k_2 u_2, \ldots), \quad\forall u=(u_1, u_2, \ldots).$$
Furthermore, we define the space
$$D(A^{1/2}) = \left\{u=(u_1, u_2, \ldots): \sum_{n=1}^{\infty} k_n^2 |u_n|^2 < \infty\right\}$$
which is a Hilbert space equipped with the scalar product
$$ (u, v)_{D(A^{1/2})} = (A^{1/2}u, A^{1/2}v), \quad\forall u, v\in D(A^{1/2}),$$
and the norm $$\|u\|_{D(A^{1/2})} = \left(\sum_{n=1}^{\infty} k_n^2
|u_n|^2\right)^{1/2}.$$ Note that $V = D(A^{1/2})$. We consider
$V^{\prime} = D(A^{-1/2})$ as the dual space of $V$. Then the
following inclusion holds
$$V\subset H = H^{\prime}\subset V^{\prime}.$$
We will now introduce the sequence spaces analogue to Sobolev
functional spaces. For $1\leq p <\infty$ and $s\in\mathbb{R}$
$$\mathrm{W}^{s, p} :=\left\{u = (u_1, u_2, \ldots): \|A^{s/2}u\|_p = \left(\sum_{n=1}^{\infty}(k_n^s|u_n|)^p\right)^{1/p} < \infty\right\},$$
and for $p=\infty$
$$\mathrm{W}^{s, \infty} :=\left\{u = (u_1, u_2, \ldots): \|A^{s/2}u\|_{\infty} = \sup_{1\leq n<\infty} (k_n^s|u_n|) < \infty\right\},$$
where for $u\in\mathrm{W}^{s, p}$ the norm is defined as
$$\|u\|_{\mathrm{W}^{s, p}} = \|A^{s/2}u\|_p.$$
Here $\|\cdot\|$ denotes the usual norm in the $l^p$ sequence space.
It is clear from the above definitions that $W^{1, 2} = V =
D(A^{1/2})$.

\begin{remark}
For the shell model we can reasonably assume that the complex
velocities $u_n$ are such that $|u_n| <1$ for almost all $n$. Then
\begin{align*}
\| u \|_{l^4}^4 = \sum_{n=1}^{\infty} |u_n|^4 \leq
\left(\sum_{n=1}^{\infty} |u_n|^2\right)^2 = |u|^4,
\end{align*}
which leads to $H\subset l^4$.
\end{remark}

We now state a Lemma which is useful in this work. We omit the proof
since it is quite simple.
\begin{lemma}\label{La}
For any smooth function $u\in H$, the following holds:
\begin{align}
  \| u \|_{l^4}^4 \leq C |u|^2 \ \|u\|^2.\label{L4}
\end{align}
\end{lemma}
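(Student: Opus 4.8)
The plan is to prove the inequality $\|u\|_{l^4}^4 \leq C\,|u|^2\,\|u\|^2$ by a direct interpolation-type estimate, exploiting the fact that the wave numbers $k_n = k_0 2^n$ grow geometrically, so that the $V$-norm controls a weighted $\ell^2$-sum with exponentially increasing weights. First I would write, for each fixed $n$,
\begin{equation*}
|u_n|^4 = |u_n|^2 \cdot |u_n|^2 \leq |u_n|^2 \cdot \Big(\sum_{m=1}^{\infty} |u_m|^2\Big) = |u_n|^2\,|u|^2,
\end{equation*}
which already gives $\|u\|_{l^4}^4 \leq |u|^2 \sum_n |u_n|^2 = |u|^4$; but this is too crude if we want the $\|u\|$ factor, so instead the idea is to split the sum according to whether $|u_n|$ is large or small relative to the scale set by $k_n$. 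A cleaner route: observe that $|u_n| \leq |u|$ for every $n$, hence
\begin{equation*}
\sum_{n=1}^{\infty} |u_n|^4 = \sum_{n=1}^{\infty} |u_n|^2\,|u_n|^2 \leq |u|^2 \sum_{n=1}^{\infty} |u_n|^2.
\end{equation*}
This shows $H \subset l^4$ with $\|u\|_{l^4}^2 \leq |u|^2$, recovering the Remark. To bring in $\|u\|$, I would instead interpolate: for any $n$, $|u_n|^2 \leq k_n^{-2} \cdot k_n^2 |u_n|^2 \leq k_0^{-2} k_n^2 |u_n|^2$ is false in general (wrong direction for small $k_n$), so the correct manoeuvre is to bound the $\ell^4$ norm by the product of the $\ell^2$ and $\ell^\infty$ norms and then control $\ell^\infty$ by $V$.

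The key steps, in order, are: (1) note $\|u\|_{l^4}^4 = \sum_n |u_n|^4 \leq \big(\sup_n |u_n|^2\big) \sum_n |u_n|^2 = \|u\|_{l^\infty}^2\,|u|^2$; (2) bound the sup-norm: since $k_n \geq k_1 = 2k_0 > 0$ for all $n \geq 1$, we have $|u_n|^2 \leq k_0^{-2} k_1^{-2}\cdot(k_1 k_n)\,|u_n|^2$... — more simply, $\|u\|_{l^\infty}^2 = \sup_n |u_n|^2 \leq \sup_n k_n^{-2} \cdot \sup_n k_n^2|u_n|^2$ is again not quite it; the honest bound is $\|u\|_{l^\infty}^2 = \sup_n |u_n|^2 \leq \sum_n k_n^2 |u_n|^2 \cdot \sup_n k_n^{-2} = k_1^{-2}\|u\|^2 \leq (2k_0)^{-2}\|u\|^2$, using that $\sup_n k_n^{-2} = k_1^{-2}$ because the $k_n$ are increasing; (3) combine: $\|u\|_{l^4}^4 \leq (2k_0)^{-2}\|u\|^2\,|u|^2$, which is the claim with $C = (2k_0)^{-2}$. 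Step (2) is the only place the geometric structure of $k_n$ enters, and it is trivial here precisely because $\inf_n k_n = k_1 > 0$; in the continuous Navier–Stokes setting this same inequality is a genuine Sobolev embedding, but on the shell lattice it collapses to the elementary observation that $\ell^2$-with-weights embeds into $\ell^\infty$.

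The main (and only mild) obstacle is bookkeeping around the index shift: the boundary conditions fix $u_{-1} = u_0 = 0$, so the active indices start at $n = 1$ with $k_1 = 2k_0$, and one must be careful that $\sup_{n\geq 1} k_n^{-2} = k_1^{-2}$ rather than something involving $k_0$. Since the paper states the proof is "quite simple" and omits it, I would present exactly the three-line chain above: $\|u\|_{l^4}^4 \leq \|u\|_{l^\infty}^2\,|u|^2$ and $\|u\|_{l^\infty}^2 \leq k_1^{-2}\|u\|^2$, giving the constant $C = k_1^{-2} = 1/(4k_0^2)$. No compactness, no Fourier analysis, and no appeal to the earlier L\'evy-process machinery is needed; the lemma is purely a statement about weighted sequence spaces.
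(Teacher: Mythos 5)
Your final chain is correct: $\|u\|_{l^4}^4\leq\|u\|_{l^\infty}^2|u|^2$ together with $\sup_{n\geq1}|u_n|^2\leq\big(\sup_{n\geq1}k_n^{-2}\big)\sum_{n\geq1}k_n^2|u_n|^2=k_1^{-2}\|u\|^2$ yields the lemma with the explicit constant $C=k_1^{-2}=1/(4k_0^2)$, and this is exactly the ``quite simple'' argument the paper alludes to while omitting the proof (indeed it proves slightly more than needed, since even $\|u\|_{l^\infty}\leq|u|$ would give the weaker $\|u\|_{l^4}^4\leq|u|^4$). In a written-up version you should delete the exploratory false starts and keep only the two-line estimate, being careful, as you note, that the relevant infimum of the wave numbers is $k_1=2k_0$ because the indices start at $n=1$.
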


\subsection{Properties of the linear and nonlinear operators}
We define the bilinear operator $B(\cdot, \cdot): V \times
H\rightarrow H$ as
$$B(u, v) = (B_1(u,v), B_2(u, v), \ldots),$$ where
\begin{align*}
B_n(u, v)= ik_n\left(\frac{1}{4}u\s_{n+1} v\s_{n-1} -
\frac{1}{2}(u\s_{n+1} v\s_{n+2} + u\s_{n+2} v\s_{n+1})
 + \frac{1}{8}u\s_{n-1} v\s_{n-2}\right).
\end{align*}
In other words, if $\{e_n\}_{n=1}^{\infty}$ be a orthonormal basis
of $H$, i.e. all the entries of $e_n$ are zero except at the place
$n$ it is equal to $1$, then
\begin{align}\label{B}
B(u, v)= i\sum_{n=1}^{\infty}k_n\left(\frac{1}{4}u\s_{n+1} v\s_{n-1}
- \frac{1}{2}(u\s_{n+1} v\s_{n+2} + u\s_{n+2} v\s_{n+1})
 + \frac{1}{8}u\s_{n-1} v\s_{n-2}\right)e_n.
\end{align}

 The following lemma says that $B(u, v)$ makes
sense as an element of $H$, whenever $u\in V$ and $v\in H$ or $u\in
H$ and $v\in V$. It also says that $B(u, v)$ makes sense as an
element of $V^{\prime}$. Here we state the following lemma which has
been proved in Constantin, Levant and Titi \cite{CLT} for the Sabra
shell model, but one can also prove the similar estimates for the
GOY model (see Barbato, Barsanti, Bessaih, and Flandoli\cite{Ba}).
\begin{lemma}\label{Bprop1}
(i) There exist constants $C_1 >0, C_2 >0$,
\begin{equation}
|B(u, v)| \leq C_1 \|u\| |v|, \quad\forall u\in V, v\in H,
\end{equation}
and
\begin{equation}
|B(u, v)| \leq C_2 |u| \|v\|, \quad\forall u\in H, v\in V.
\end{equation}
(ii) $B: H\times H\rightarrow V^{\prime}$ is a bounded bilinear
operator and for a constant $C_3 > 0$
\begin{equation}
\|B(u, v)\|_{V^{\prime}} \leq C_3 |u| |v|, \quad\forall u, v\in H.
\end{equation}
(iii) $B: H\times D(A)\rightarrow V$ is a bounded bilinear operator
and for a constant $C_4 > 0$
\begin{equation}
\|B(u, v)\|_{V} \leq C_4 |u| |Av|, \quad\forall u\in H, v\in D(A).
\end{equation}
(iv) For every $u\in V$ and $v\in H$
\begin{equation}\label{1}
(B(u, v), v) = 0.
\end{equation}
\end{lemma}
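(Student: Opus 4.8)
The plan is to derive all four assertions from the single elementary pointwise estimate
\[
|B_n(u,v)| \le \frac{1}{2}\,k_n\big(|u_{n+1}||v_{n-1}| + |u_{n+1}||v_{n+2}| + |u_{n+2}||v_{n+1}| + |u_{n-1}||v_{n-2}|\big),
\]
which is immediate from the definition \eqref{B} and the triangle inequality (all coefficients of $B_n$ are bounded by $\frac12$), combined with the dyadic relations $k_{n\pm1}=2^{\pm1}k_n$, $k_{n\pm2}=2^{\pm2}k_n$ and the two trivial ``$\ell^\infty$'' embeddings $\sup_n k_n|u_n|\le\|u\|$ for $u\in V$ and $\sup_n|u_n|\le|u|$ for $u\in H$, each of which follows by retaining a single term of the defining series. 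The point is that in any product $k_n|u_j||v_k|$ above the weight $k_n$ differs from $k_j$ (respectively $k_k$) by a bounded dyadic factor only, so it may be transferred to whichever of the two factors is convenient.

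For part (i), in the first inequality I would move $k_n$ onto the $u$-factor, bounding each of the four terms by $C\|u\|$ times one of $|v_{n-1}|,|v_{n+2}|,|v_{n+1}|,|v_{n-2}|$; then $(a+b+c+d)^2\le 4(a^2+b^2+c^2+d^2)$ and summation over $n$ (each shifted copy of $\{|v_m|^2\}_m$ summing to at most $|v|^2$) give $|B(u,v)|\le C_1\|u\||v|$. The second inequality is the mirror image: move $k_n$ onto the $v$-factor, bound it by $C\|v\|$, and sum the remaining $u$-factors in square to obtain $|u|^2$.

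Parts (ii) and (iii) are the same computation carried out in the weighted norms $\|w\|_{V'}^2=\sum_n k_n^{-2}|w_n|^2$ and $\|w\|_V^2=\sum_n k_n^2|w_n|^2$. In (ii) the weight $k_n^{-2}$ exactly cancels the $k_n^2$ produced by the pointwise bound, leaving $\|B(u,v)\|_{V'}^2\le C\sum_n(|u_{n+1}||v_{n-1}|+\cdots)^2$, and bounding one factor of each product by $\sup_n|u_n|\le|u|$ while summing the other yields $\|B(u,v)\|_{V'}\le C_3|u||v|$. In (iii) one writes $k_n^2|v_{n-1}|=4|(Av)_{n-1}|$, $k_n^2|v_{n+1}|=\frac14|(Av)_{n+1}|$, $k_n^2|v_{n+2}|=\frac1{16}|(Av)_{n+2}|$, $k_n^2|v_{n-2}|=16|(Av)_{n-2}|$, so that $k_n|B_n(u,v)|\le C(|u_{n+1}||(Av)_{n-1}|+\cdots)$, and the same ``$\sup$ times sum'' argument with $|u|$ and $|Av|$ replacing $|u|$ and $|v|$ gives $\|B(u,v)\|_V\le C_4|u||Av|$.

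For part (iv) I would expand $(B(u,v),v)=\mathrm{Re}\sum_n B_n(u,v)v_n^\star$ into its four groups of cubic terms; under the hypothesis $u\in V$, $v\in H$ the triple series converges absolutely, since $k_n|u_{n+1}|\le\|u\|$ and $\sum_n|v_{n-1}||v_n|\le|v|^2$ by Cauchy--Schwarz, so the terms may be freely rearranged. A shift of the summation index, using $k_{n+2}=4k_n$ to turn the coefficient $\frac18$ into $\frac12$, identifies the group arising from $\frac18 u_{n-1}^\star v_{n-2}^\star v_n^\star$ with the negative of the group arising from $-\frac12 u_{n+1}^\star v_{n+2}^\star v_n^\star$; a second shift, using $k_{n-1}=\frac12 k_n$ to turn $\frac12$ into $\frac14$, identifies the group from $-\frac12 u_{n+2}^\star v_{n+1}^\star v_n^\star$ with the negative of the group from $\frac14 u_{n+1}^\star v_{n-1}^\star v_n^\star$. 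Since the boundary conditions $u_{-1}=u_0=0$ ensure that these reindexings generate no boundary contributions, the entire sum --- not merely its real part --- vanishes. I expect the only delicate bookkeeping to be the tracking of dyadic factors and index shifts in (iv); the rest is mechanical.
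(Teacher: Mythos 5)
Your argument is correct. Note, however, that the paper does not actually prove Lemma \ref{Bprop1}: it quotes the estimates from Constantin--Levant--Titi \cite{CLT} (proved there for the Sabra model) and refers to Barbato--Barsanti--Bessaih--Flandoli \cite{Ba} for the GOY case, so there is no in-paper proof to compare against. What you supply is the standard direct verification that those references carry out: the pointwise bound $|B_n(u,v)|\le \tfrac12 k_n\big(|u_{n+1}||v_{n-1}|+|u_{n+1}||v_{n+2}|+|u_{n+2}||v_{n+1}|+|u_{n-1}||v_{n-2}|\big)$ together with transfer of the dyadic weight (using $k_{n\pm 1}=2^{\pm1}k_n$, $k_{n\pm2}=2^{\pm2}k_n$) to the factor carrying the $V$-, $V'$- or $D(A)$-norm gives (i)--(iii), and your two index shifts in (iv) do produce exact termwise cancellation: re-indexing the $\tfrac18\,u^{\star}_{n-1}v^{\star}_{n-2}v^{\star}_n$ group by $n\mapsto n+2$ (picking up the factor $4$) cancels the $-\tfrac12\,u^{\star}_{n+1}v^{\star}_{n+2}v^{\star}_n$ group, and re-indexing the $-\tfrac12\,u^{\star}_{n+2}v^{\star}_{n+1}v^{\star}_n$ group by $n\mapsto n-1$ (factor $\tfrac12$) cancels the $\tfrac14\,u^{\star}_{n+1}v^{\star}_{n-1}v^{\star}_n$ group, so the full complex sum vanishes before taking real parts; your absolute-convergence remark ($k_n|u_{n+1}|\le\|u\|$ and Cauchy--Schwarz in $v$) legitimately justifies the rearrangement. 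Two small points of bookkeeping: the convention that components with non-positive index vanish must be invoked for $v$ as well as for $u$ (the paper states $u_{-1}=u_0=0$ only for the unknown), and in (ii)--(iii) you should note explicitly that bilinearity is immediate from the formula for $B_n$, so the stated boundedness of the bilinear map follows from your norm estimates. With those remarks your proof is a complete, self-contained substitute for the citation.
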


We now present one more important property of the nonlinear operator
$B$ in the following lemma which will play important role in the
later part of this section. The proof is straightforward and uses
the bilinearity property of $B$.
\begin{lemma}\label{Bprop2}
If $w=u-v$, then
$$B(u, u)-B(v, v) = B(v, w) + B(w, v) + B(w, w).$$
\end{lemma}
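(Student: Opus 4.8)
The statement is a purely algebraic identity, so the plan is simply to expand $B(u,u)$ using the bilinearity of $B$ asserted in Lemma~\ref{Bprop1} (equivalently, visible directly from the componentwise formula \eqref{B}, in which each $B_n(u,v)$ is a finite sum of terms of the form $(\text{a coordinate of }u)\cdot(\text{a coordinate of }v)$, hence linear in $u$ for fixed $v$ and in $v$ for fixed $u$ over the real scalars on which $H$ is modelled). Writing $u=v+w$, I would first distribute over the first argument,
\begin{equation*}
B(u,u)=B(v+w,\,u)=B(v,u)+B(w,u),
\end{equation*}
and then distribute over the second argument in each term, again using $u=v+w$:
\begin{equation*}
B(v,u)=B(v,v)+B(v,w),\qquad B(w,u)=B(w,v)+B(w,w).
\end{equation*}
Summing these two lines gives $B(u,u)=B(v,v)+B(v,w)+B(w,v)+B(w,w)$, and subtracting $B(v,v)$ from both sides produces the claimed identity $B(u,u)-B(v,v)=B(v,w)+B(w,v)+B(w,w)$.

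There is essentially no obstacle here; the only point worth checking is that every bilinear expression appearing above is meaningful in a common space. This is ensured by Lemma~\ref{Bprop1}: if $u,v$ lie in $V$ (resp. $H$, resp. $D(A)$) then so does $w=u-v$, and $B$ sends the relevant pairs into $H$ (resp. into $V'$, resp. into $V$) in a bounded way, so all four terms on the right-hand side, as well as the left-hand side, are well-defined elements of the same space and the rearrangement is legitimate. The identity is recorded here because it is exactly the device needed later to rewrite the difference of the nonlinear terms evaluated at two solutions $u$ and $v$ in terms of their difference $w=u-v$, which is the standard entry point for the local monotonicity estimate and the uniqueness argument in Section~4.
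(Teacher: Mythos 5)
Your proof is correct and is exactly the argument the paper intends: the paper omits the details, remarking only that the identity ``is straightforward and uses the bilinearity property of $B$,'' which is precisely your expansion of $B(v+w,v+w)$ followed by subtracting $B(v,v)$. Nothing further is needed.
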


With above functional setting and following the classical treatment
of the Navier-Stokes equation, one can write the stochastic GOY
model of turbulence \eqref{goy} with the L\'{e}vy forcing as the
following,
\begin{align}\label{goy1}
&\d u + \big[\nu Au + B(u, u)\big] \d t = f(t) \d t + \sqrt{\varepsilon}\sigma(t, u) \d W(t) + \varepsilon\int_Zg(u,z)\tilde{N}(dt,dz)\\
& u(0) = u_0,
\end{align}
where $u \in H$, the operators $A$ and $B$ are defined through
\eqref{A} and \eqref{B} respectively,  $f=(f_1, f_2, \ldots),
\sigma(t, u)=(\sigma_1(t, u_1), \sigma_2(t, u_2), \ldots)$. Here
$(W(t)_{t\geq 0})$ is a $H$-valued Wiener process with trace class
covariance, and the space $L_Q$ has been defined in section 1. Here
$g(u,z)$ is a measurable mapping from $H \times Z$ into H and let $
\mathcal{D}([0,T],H) $ be the space of all c\`{a}dl\`{a}g paths from
$[0,T]$ into H.

Assume that $\sigma$ and $g$ satisfy the following hypotheses of
joint continuity, Lipschitz condition and linear growth:
\begin{hypothesis}\label{hyp}
The main hypothesis is the following,
\begin{itemize}
\item[H.1.]  The function $\sigma \in C([0, T] \times V; L_Q(H_0; H))$, and $g\in \mathbb{H}^2_{\lambda}([0, T] \times Z; H)$.

\item[H.2.]  For all $t \in (0, T)$, there exists a positive constant $K$ such that for all $u \in H$,
$$|\sigma(t, u)|^2_{L_Q} + \int_{Z} |g(u, z)|^2_{H}\lambda(dz) \leq K(1 +|u|^2).$$

\item[H.3.]  For all $t \in (0, T)$,  there exists a positive constant $L$ such that for all $u, v \in H$,
$$|\sigma(t, u) - \sigma(t, v)|^2_{L_Q} + \int_{Z} |g(u, z)-g(v, z)|^2_{H}\lambda(dz)\leq L|u - v|^2.$$
\end{itemize}
\end{hypothesis}

The following lemma shows that sum of the linear and nonlinear
operator is locally monotone in the $l^4$-ball.
\begin{lemma}\label{Mon}
For a given $r > 0$, let us denote by $\mathbb{B}_r$ the closed
$l^4$-ball in $V$:
$$\mathbb{B}_r = \Big\{v\in V; \|v\|_{l^4} \leq r\Big\}.$$
Define the nonlinear operator F on $V$ by $F(u):=-\nu Au - B(u, u)$.
Then for $0 < \varepsilon < \frac{\nu}{2L}$, where $L$ is the
positive constant that appears in the condition (H.3), the pair $(F,
\sqrt{\varepsilon}\sigma + \varepsilon\int_Zg(.,z)\lambda(dz))$ is
monotone in $\mathbb{B}_r$, i.e. for any $u\in V$ and $v\in
\mathbb{B}_r$
\begin{equation}\label{monotone}
(F(u) - F(v), w)  - \frac{r^4}{\nu^3} |w|^2 +
\varepsilon\big[|\sigma(t,u)-\sigma(t,v)|^2_{L_Q} +
\int_Z|g(u,z)-g(v,z)|^2\lambda(dz)\big]\leq0
\end{equation}
where $w = u - v$.
\end{lemma}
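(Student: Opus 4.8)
The plan is to estimate the left-hand side of \eqref{monotone} directly, treating the linear part, the nonlinear part, and the noise part separately and then combining them. First I would write $(F(u)-F(v),w) = -\nu(Au-Av,w) - (B(u,u)-B(v,v),w)$. The linear term is immediate: $(Au-Av,w) = (Aw,w) = \|w\|^2$, so it contributes $-\nu\|w\|^2$. For the nonlinear term I would invoke Lemma~\ref{Bprop2} to write $B(u,u)-B(v,v) = B(v,w)+B(w,v)+B(w,w)$, and then use the orthogonality property \eqref{1} of Lemma~\ref{Bprop1}, namely $(B(w,v),v)=0$ type identities, to kill the terms that would otherwise be uncontrollable. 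The key cancellation is that $(B(w,w),w)=0$ and $(B(v,w),w)=0$, leaving only $(B(w,v),w)$ to estimate.

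The heart of the argument is therefore bounding $|(B(w,v),w)|$ by something of the form (small multiple of $\|w\|^2$) plus $\frac{r^4}{\nu^3}|w|^2$. Here I would use the bilinear estimate from Lemma~\ref{Bprop1}(i), say $|B(w,v)|\le C|w|\,\|v\|$ or the companion bound, together with the interpolation-type Lemma~\ref{La}, $\|v\|_{l^4}^4 \le C|v|^2\|v\|^2$, and the hypothesis $v\in\mathbb{B}_r$ so that $\|v\|_{l^4}\le r$. The aim is to produce $|(B(w,v),w)| \le \frac{\nu}{2}\|w\|^2 + \frac{C r^4}{\nu^3}|w|^2$ after a Young's inequality with the right split of exponents; the exact constants in the powers of $\nu$ and $r$ are dictated by matching the $\frac{r^4}{\nu^3}$ term in \eqref{monotone}, and one chooses the Young exponents accordingly. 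This uses $\|w\|_{l^4}$ or $|w|$ estimates on $w$ componentwise against $v$.

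For the noise part, Hypothesis (H.3) gives immediately
\[
\varepsilon\bigl[|\sigma(t,u)-\sigma(t,v)|^2_{L_Q} + \int_Z|g(u,z)-g(v,z)|^2\lambda(dz)\bigr] \le \varepsilon L|u-v|^2 = \varepsilon L|w|^2,
\]
and since $0<\varepsilon<\frac{\nu}{2L}$ this is bounded by $\frac{\nu}{2}|w|^2$. Wait — actually I would be more careful: the term we need to absorb is a multiple of $|w|^2$, but the only negative term available after the nonlinear estimate is $-\frac{\nu}{2}\|w\|^2$, not $-|w|^2$. Since $\|w\|^2 \ge k_1^2|w|^2 \ge |w|^2$ (as $k_n=k_02^n$ with $k_0>0$, and in the normalization $k_1\ge 1$, or more safely just use $\|w\|\ge\text{const}\cdot|w|$), the term $\frac{\nu}{2}\|w\|^2$ dominates $\frac{\nu}{2}|w|^2$ and hence dominates $\varepsilon L|w|^2$. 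So the leftover $-\frac{\nu}{2}\|w\|^2$ from the linear part absorbs the noise contribution.

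Assembling: the left-hand side of \eqref{monotone} is
\[
-\nu\|w\|^2 + |(B(w,v),w)| - \frac{r^4}{\nu^3}|w|^2 + \varepsilon L|w|^2 \le -\nu\|w\|^2 + \frac{\nu}{2}\|w\|^2 + \frac{r^4}{\nu^3}|w|^2 - \frac{r^4}{\nu^3}|w|^2 + \frac{\nu}{2}|w|^2 \le 0,
\]
using $\|w\|^2\ge|w|^2$ for the last step. The main obstacle I expect is getting the nonlinear estimate with exactly the constant $\frac{r^4}{\nu^3}$ (and coefficient $1$) as written, rather than some $\frac{Cr^4}{\nu^3}$ — this forces a specific choice of how to split the Young's inequality when bounding $|(B(w,v),w)|$, probably interpolating $|(B(w,v),w)|\le \|w\|_{l^4}\|v\|_{l^4}|w|$-style and then applying Young with exponents chosen so the $\|w\|^2$ coefficient is exactly $\nu/2$; one also needs Lemma~\ref{La} applied to $v$ to convert $\|v\|_{l^4}$ into the ball radius $r$. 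Everything else is routine bookkeeping.
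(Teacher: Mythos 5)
Your plan follows the paper's own argument essentially step for step: reduce to $(B(w,v),w)$ via Lemma~\ref{Bprop2} and the orthogonality property (iv) of Lemma~\ref{Bprop1}, bound it by $\frac{\nu}{2}\|w\|^2+\frac{r^4}{\nu^3}|w|^2$ via the $l^4$ H\"older estimate plus Lemma~\ref{La} and Young, then absorb the (H.3) noise term using $\varepsilon<\frac{\nu}{2L}$ and the embedding $\|w\|\geq |w|$. One bookkeeping correction: the componentwise bound is $|(B(w,v),w)|\leq C\|v\|_{l^4}\|w\|_{l^4}\|w\|$ (not one of the Lemma~\ref{Bprop1}(i) estimates), with Lemma~\ref{La} applied to $w$ to give $\|w\|_{l^4}\leq C|w|^{1/2}\|w\|^{1/2}$, while $\|v\|_{l^4}\leq r$ comes directly from $v\in\mathbb{B}_r$ and needs no interpolation.
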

\begin{proof}
First note that, $$\nu(Aw, w) = \nu\|w\|^2.$$ Next using the Lemma
\ref{Bprop2} and equation\eqref{1} from Lemma \ref{Bprop1}, we have
$$(B(u, u) - B(v, v), w) = (B(v, w) + B(w, v) + B(w, w), w) = (B(w, v), w).$$
Now using the definition of the operator $B$ and equation \eqref{L4}
from Lemma \ref{La}, we get for $C >0$,
\begin{align*}
\big|(B(w, v), w)\big| &= \big|\sum_{n=1}^{\infty} ik_n\big[\frac{1}{4}v\s_{n-1}w\s_{n+1} w\s_{n} - \frac{1}{2}(w\s_{n+1} v\s_{n+2} + w\s_{n+2} v\s_{n+1})w\s_n + \\
&\quad + \frac{1}{8}w\s_{n-1} v\s_{n-2}w\s_n\big]\big|\\
&\leq C\|v\|_{l^4} \|w\|_{l^4} \|w\|\\
&\leq \|v\|_{l^4} |w|^{1/2} \|w\|^{3/2}\\
&\leq \frac{\nu}{2}\|w\|^2 + \frac{27}{32\nu^3} |w|^2 \|v\|_{l^4}^4.
\end{align*}
Since $v\in\mathbb{B}_r$, the above relation yields
$$- (B(w, v), w) \leq \frac{\nu}{2}\|w\|^2 + \frac{r^4}{\nu^3} |w|^2.$$
Hence by the definition of the operator $F$,
\begin{equation}\label{monotone2}
(F(u) - F(v), w) \leq -\frac{\nu}{2}\|w\|^2 + \frac{r^4}{\nu^3}
|w|^2.
\end{equation}
 We have
$$(F(u) - F(v), w) + \frac{\nu}{2}\|w\|^2 - \frac{r^4}{\nu^3} |w|^2\leq0.$$
But $V\subset H$ $\Rightarrow
\frac{\nu}{2}|w|^2\leq\frac{\nu}{2}\|w\|^2$. We get,
$$(F(u) - F(v), w) + \frac{\nu}{2}|w|^2 - \frac{r^4}{\nu^3} |w|^2\leq0.$$
Using condition (H.3) one can deduce that,
$$(F(u) - F(v), w)  - \frac{r^4}{\nu^3} |w|^2 + \frac{\nu}{2L}[|\sigma(t,u)-\sigma(t,v)|^2_{L_Q} + \int_Z|g(u,z)-g(v,z)|^2\lambda(dz)]\leq0 $$
Now choose $0 < \varepsilon < \frac{\nu}{2L}$ so that we get,
$$(F(u) - F(v), w)  - \frac{r^4}{\nu^3} |w|^2 + \varepsilon\big[|\sigma(t,u)-\sigma(t,v)|^2_{L_Q} + \int_Z|g(u,z)-g(v,z)|^2\lambda(dz)\big]\leq0$$
\end{proof}

\section{Energy Estimates and Existence Result}
\setcounter{equation}{0} Let $H_n := \text{span}\ \{e_1, e_2,
\cdots, e_n\}$ where $\{e_j\}$ is any fixed orthonormal basis in $H$
with each $e_j \in D(A)$. Let $P_n$ denote the orthogonal projection
of $H$ to $H_n$. Define $u^n = P_n u$, not to cause any confusion in
notation with earlier $u_n$. Let $W_n = P_nW$ . Let $\sigma_n =
P_n\sigma $ and
$\int_Zg^n(\u(t-),z)\tilde{N}(dt,dz)=P_n\int_Zg(u(t-),z)\tilde{N}(dt,dz)$,
where $g^n=P_ng$. Define $\u$ as the solution of the following
stochastic differential equation in the variational form such that
for each $v \in H_n$,
\begin{align}\label{variational}
\d (\u(t) , v)& = (F(\u(t)), v)\d t + (f(t), v)\d t +
\sqrt{\varepsilon}(\sigma_n(t, \u(t)) \d W_n(t), v) \nonumber\\ & +
\varepsilon\int_Z\big(g^n(\u(t-),z),v\big)\tilde{N}(dt,dz),
\end{align}
with $\u(0) = P_n u(0) $.

%Theorem %%%%%%%%%%%%%%%%%%%%%%%%%%%%%%%%%%%%%%%%%%%%%%%%%
\begin{theorem}\label{energy}
Under the above mathematical setting let $f$ be in $\mathrm{L}^2(
[0, T], H)$,  $u (0)$ be $\mathcal{F}_0$ measurable, $\sigma \in
C([0, T] \times V; L_Q(H_0; H))$,  $g\in \mathbb{H}^2_{\lambda}([0,
T] \times Z; H)$ and $\mathbb{E}|u (0)|^2 < \infty$. Let $\u$ denote
the unique strong solution of the stochastic differential equation
\eqref{variational} in $\mathcal{D}([0, T], H_n)$. Then with $K$ as
in condition (H.2), the following estimates hold:

For all $\varepsilon$, and $0 \leq t \leq T$,
\begin{align}
&\mathbb{E}| \u(t)|^2 + \nu\int_0^t \mathbb{E}\| \u(s)\|^2 \d s
\nonumber\\ &\quad \leq \left(1 + \varepsilon KTe^{\varepsilon
KT}\right)\left(\mathbb{E}|u(0)|^2 +
\frac{1}{\nu}\int_0^t\|f(s)\|_{V^\prime}^2ds + \varepsilon KT\right)
,\label{energy1}
\end{align}
and for all $\varepsilon >0$,
\begin{align}
\mathbb{E}\left[\sup_{0\leq t\leq T} |\u(t)|^2 + 2\nu\int_0^T
\|\u(t)\|^2 \d t\right] \leq C\left(\mathbb{E}|u(0)|^2, \int_0^T
\|f(t)\|^2_{V^{\prime}} \d t, \nu, T\right)\label{energy2}.
\end{align}
\end{theorem}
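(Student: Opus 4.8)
The plan is to apply the finite-dimensional It\^o formula (Lemma, It\^o's theorem 1) to the function $v\mapsto|v|^2$ along the Galerkin solution $\u$ of \eqref{variational}, and then take expectations. First I would note that the stochastic integrals against $\d W_n$ and against $\tilde N(\d t,\d z)$ are (local) martingales, so after stopping at a suitable sequence of stopping times their expectations vanish; the It\^o correction terms produce, from the Wiener part, $\varepsilon|\sigma_n(t,\u)|^2_{L_Q}$, and from the jump part, $\varepsilon^2\int_Z|g^n(\u(t-),z)|^2\lambda(\d z)$ (the second-order remainder term in the jump It\^o formula for $|\cdot|^2$ is exactly the square of the jump, since $|\cdot|^2$ is quadratic). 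This gives, for $0\le t\le T$,
\begin{align*}
\mathbb{E}|\u(t)|^2 &= \mathbb{E}|P_nu(0)|^2 + 2\mathbb{E}\!\int_0^t(F(\u(s)),\u(s))\,\d s + 2\mathbb{E}\!\int_0^t(f(s),\u(s))\,\d s\\
&\quad + \varepsilon\,\mathbb{E}\!\int_0^t|\sigma_n(s,\u(s))|^2_{L_Q}\,\d s + \varepsilon^2\,\mathbb{E}\!\int_0^t\!\!\int_Z|g^n(\u(s-),z)|^2\lambda(\d z)\,\d s.
\end{align*}

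Next I would estimate each term. By the definition of $F$ and the orthogonality property $(B(u,u),u)=0$ from Lemma \ref{Bprop1}(iv), we have $(F(\u),\u)=-\nu\|\u\|^2$. For the forcing term, use $2(f,\u)\le \frac1\nu\|f\|^2_{V'}+\nu\|\u\|^2$, which absorbs half of the dissipation. For the noise terms, apply hypothesis (H.2) together with $|\sigma_n|_{L_Q}\le|\sigma|_{L_Q}$ and $|g^n|_H\le|g|_H$ (orthogonal projections are contractions), bounding both by $K(1+|\u(s)|^2)$; assuming $\varepsilon\le1$ so that $\varepsilon^2\le\varepsilon$, the two noise contributions are together at most $2\varepsilon K\int_0^t(1+\mathbb{E}|\u(s)|^2)\,\d s$. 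Collecting terms yields
\begin{align*}
\mathbb{E}|\u(t)|^2 + \nu\!\int_0^t\mathbb{E}\|\u(s)\|^2\,\d s \le \mathbb{E}|u(0)|^2 + \frac1\nu\!\int_0^t\|f(s)\|^2_{V'}\,\d s + \varepsilon KT + \varepsilon K\!\int_0^t\mathbb{E}|\u(s)|^2\,\d s.
\end{align*}
Dropping the (nonnegative) dissipation term and applying Gronwall's inequality gives $\mathbb{E}|\u(t)|^2\le\big(\mathbb{E}|u(0)|^2+\frac1\nu\int_0^T\|f\|^2_{V'}+\varepsilon KT\big)e^{\varepsilon KT}$; substituting this bound back into the right-hand side of the displayed inequality produces \eqref{energy1}.

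For \eqref{energy2}, I would return to the It\^o identity before taking expectations, move the dissipation to the left, and take the supremum over $t\in[0,T]$ before expectation. The deterministic terms are handled as above. The new difficulty is controlling $\mathbb{E}\sup_t$ of the two martingale terms: for the Wiener integral $M^W_t=2\sqrt\varepsilon\int_0^t(\u(s),\sigma_n(s,\u(s))\,\d W_n(s))$ apply the Burkholder--Davis--Gundy inequality (stated above) to get $\mathbb{E}\sup_{t\le T}|M^W_t|\le C\,\mathbb{E}\big(\int_0^T|\u(s)|^2|\sigma_n|^2_{L_Q}\,\d s\big)^{1/2}\le C\,\mathbb{E}\big(\sup_{t\le T}|\u(t)|^2\int_0^T K(1+|\u(s)|^2)\,\d s\big)^{1/2}$, then use Young's inequality $Cab\le\frac14a^2+C'b^2$ to absorb a $\frac14\mathbb{E}\sup_{t\le T}|\u(t)|^2$ term into the left-hand side; the jump martingale $M^N_t=\varepsilon\int_0^t\int_Z\big(2(\u(s-),g^n(\u(s-),z))+|g^n(\u(s-),z)|^2\big)\tilde N(\d s,\d z)$ is treated the same way via BDG, with its quadratic variation dominated by $\int_0^T\int_Z|g^n|^2(1+|\u|^2)\lambda(\d z)\,\d s$ after using (H.2), again absorbing a small multiple of $\mathbb{E}\sup_t|\u|^2$. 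The remaining terms on the right are controlled by $\mathbb{E}|u(0)|^2$, $\int_0^T\|f\|^2_{V'}$, and $\int_0^T\mathbb{E}|\u(s)|^2\,\d s$, and the last of these is bounded uniformly in $n$ by the already-established \eqref{energy1}; a final Gronwall (or direct) argument yields \eqref{energy2} with a constant depending only on the indicated quantities. The main obstacle is the sup-of-martingale estimate, specifically organising the BDG bounds so that every term involving $\sup_{t\le T}|\u(t)|^2$ appears with a coefficient small enough to be absorbed on the left while keeping all constants independent of $n$ and (for the second estimate) locally uniform in $\varepsilon$.
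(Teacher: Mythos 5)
Your plan is the paper's own: It\^o's formula for $|\cdot|^2$, the cancellation $(B(u,u),u)=0$, the splitting $2(f,\u)\le \nu\|\u\|^2+\frac1\nu\|f\|^2_{V'}$, hypothesis (H.2) on the noise corrections, a stopping-time/martingale argument, Gronwall and back-substitution for \eqref{energy1}; then supremum before expectation, Burkholder--Davis--Gundy plus Young's inequality to absorb $\frac14\mathbb{E}\sup_t|\u(t)|^2$, and a second Gronwall for \eqref{energy2}. All of that matches the paper's proof step for step (the paper even shares your $\varepsilon$ versus $\varepsilon^2$ wrinkle in the jump correction), so the first estimate and the overall architecture are fine.

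There is, however, one step in your sup-estimate that would fail as written. You fold the quadratic jump correction into the compensated martingale, i.e.\ you apply BDG to
$M^N_t=\varepsilon\int_0^t\int_Z\bigl(2(\u(s-),g^n(\u(s-),z))+|g^n(\u(s-),z)|^2\bigr)\tilde N(\d s,\d z)$,
and claim its quadratic variation is dominated by $\int_0^T\int_Z|g^n|^2(1+|\u|^2)\lambda(\d z)\d s$ via (H.2). The quadratic variation of $M^N$ is an integral of $\bigl(2(\u(s-),g^n)+|g^n|^2\bigr)^2$ against $N(\d s,\d z)$, so it contains an $|g^n|^4$ contribution; (H.2) only controls $\int_Z|g^n|^2\lambda(\d z)$, not any fourth moment, so the stated domination does not follow. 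The paper sidesteps this: it writes $N=\tilde N+\lambda(\d z)\d t$, observes that the term $\varepsilon\int_0^t\int_Z|g^n|^2N(\d s,\d z)$ is nonnegative and nondecreasing, so its supremum is its value at the terminal time and its expectation is just the compensator integral (strong $1$-integrability, Lemma~\ref{le1}), which (H.2) bounds by $\varepsilon K(T+\mathbb{E}\int_0^T\sup_{s\le t}|\u(s)|^2\d t)$; BDG is then applied only to the linear martingale $2\varepsilon\int_0^t\int_Z(\u(s-),g^n)\tilde N(\d s,\d z)$, whose bracket involves only $|(\u,g^n)|^2\le|\u|^2|g^n|^2$ and is controlled by (H.2). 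Your argument is repairable either by adopting that splitting or by estimating the $|g^n|^4$ piece through $\bigl(\sum_i a_i^2\bigr)^{1/2}\le\sum_i a_i$ applied to the jumps, but as stated the BDG step for $M^N$ is not justified.
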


\begin{proof}
Applying It\^{o}'s lemma to the function $|\u(t)|^2$ and using the
properties of the operators $A$ and $B$, we notice that,
\begin{align*}
&\d |\u(t)|^2 + 2\nu\|\u(t)\|^2 \d t \\
&\quad = 2(f(t),
\u(t))\d t + \varepsilon \Tr (\sigma_n(t,\u(t)) Q \sigma_n(t,\u(t)))\d t\\
&\quad\quad + 2\sqrt{\varepsilon} (\sigma_n(t, \u(t)), \u(t)) \d W_n(t)  + \varepsilon\int_Z|g^n(\u(s-),z)|^2N(ds,dz) \\
&\quad\quad +
2\varepsilon\int_Z\big(\u(s-),g^n(\u(s-),z)\big)\tilde{N}(ds,dz)
\end{align*}
Using the inequality $$ 2ab \leq \delta a^{2} + \frac{1}{\delta}
b^{2}$$ on $2 (f(t), \u(t))$ , we obtain
\begin{align*}
\d |\u(t)|^2 &+ 2\nu\|\u(t)\|^2 \d t \leq (\nu\|\u(t)\|^2 +
\frac{1}{\nu}\|f(t)\|^2_{V^{\prime}}) \d t \\ &+
\varepsilon|\sigma_n(t,\u(t))|^2\d t +
\varepsilon\int_Z|g^n(\u(s-),z)|^2N(ds,dz) \\ &+
2\sqrt{\varepsilon}(\sigma_n(t,\u(t)),\u(t))\d W_n(t) \\ &+
2\varepsilon\int_Z\big(\u(s-),g^n(\u(s-),z)\big)\tilde{N}(ds,dz).
\end{align*}
Define $$\tau_N = \inf\left\{t: |\u(t)|^2 + \int_0^t\|\u(s)\|^2 \d s
> N\right\}.$$ Then integrating one can deduce
\begin{align}\label{555}
|\u(\t)|^2 &+\nu\int_0^{\t}\|\u(s)\|^2 \d s \nonumber\\ &\quad\leq
|u(0)|^2 +
\frac{1}{\nu}\int_0^{\t}\|f(s)\|^2_{V^{\prime}} \d s + \int_0^{\t}\varepsilon|\sigma_n(s,\u(s))|^2\d s \nonumber\\
&\quad +\int_0^{\t}\varepsilon\int_Z|g^n(\u(s-),z)|^2N(ds,dz) \nonumber\\ &\quad+ 2\sqrt{\varepsilon}\int_0^{\t} (\sigma_n(s, \u(s)), \u(s)) \d W_n(s) \nonumber\\
&\quad+
2\int_0^{\t}\varepsilon\int_Z\big(\u(s-),g^n(\u(s-),z)\big)\tilde{N}(ds,dz).
\end{align}
Hence we can write this as
\begin{align}\label{505}
|\u(\t)|^2 &+\nu\int_0^{\t}\|\u(s)\|^2 \d s \nonumber\\
&\quad\leq |u(0)|^2 +\frac{1}{\nu}\int_0^{\t}\|f(s)\|^2_{V^{\prime}} \d s + \int_0^{\t}\varepsilon|\sigma_n(s,\u(s))|^2\d s \nonumber\\
&\quad +\int_0^{\t}\varepsilon\int_Z|g^n(\u(s-),z)|^2\lambda(dz)ds\nonumber\\
&\quad+ 2\sqrt{\varepsilon}\int_0^{\t} (\sigma_n(s, \u(s)), \u(s)) \d W_n(s) \nonumber\\
&\quad+\int_0^{\t}\e\int_Z|g^n(\u(s-),z)|^2\tilde{N}(ds,dz)\nonumber\\
&\quad+2\int_0^{\t}\varepsilon\int_Z\big(\u(s-),g^n(\u(s-),z)\big)\tilde{N}(ds,dz).
\end{align}
Using H\"{o}lder's inequality, one can note that if $g^n$ is strong
2-integrable w.r.t $\tilde{N}(dt,dz)$, $|g^n|^2$ is strong
1-integrable w.r.t $\tilde{N}(dt,dz)$. Hence taking expectation on
both sides of \eqref{505}, using the lemma \ref{le1} on $|g^n|^2$,
and using the fact that the stochastic integrals appeared in
$$2\sqrt{\varepsilon}\int_0^{\t} (\sigma_n(s, \u(s)), \u(s)) \d W_n(s)$$ and
$$2\int_0^{\t}\varepsilon\int_Z\big(\u(s-),g^n(\u(s-),z)\big)\tilde{N}(ds,dz)$$ are martingales, and having zero averages, we get
\begin{align*}
&\mathbb{E}\left[|\u(\t)|^2 +\nu \int_0^{\t}\|\u(s)\|^2 \d s\right] \\
&\leq \mathbb{E}|u(0)|^2
+\frac{1}{\nu}\int_0^{\t}\|f(s)\|^2_{V^{\prime}} \d s  \\ &\quad +
\int_0^{\t}\mathbb{E}\Big[\varepsilon|\sigma_n(s,\u(s))|^2\Big]\d s+
\int_0^{\t}\mathbb{E}\left[\varepsilon\int_Z|g^n(\u(s),z)|^2\lambda(dz)\right]\d
s.
\end{align*}
Then we use the hypothesis (H.2) to obtain
\begin{align*}
&\mathbb{E}\left[|\u(\t)|^2 +
\nu \int_0^{\t}\|\u(s)\|^2 \d s\right] \\
&\quad\leq \mathbb{E}|u(0)|^2 +
\frac{1}{\nu}\int_0^{\t}\|f(s)\|^2_{V^{\prime}} \d s  + \varepsilon
K\int_0^{\t}\mathbb{E}\Big(1 + |\u(s)|^2\Big)\d s.
\end{align*}
So finally we obtain
\begin{align*}
&\mathbb{E}\Big[|\u(\t)|^2\ +
\nu \int_0^{\t}\|\u(s)\|^2 \d s\Big] \\
&\quad\leq \mathbb{E}|u(0)|^2 +
\frac{1}{\nu}\int_0^{\t}\|f(s)\|^2_{V^{\prime}} \d s  + \varepsilon
KT + \varepsilon K\int_0^{\t}\mathbb{E}\Big( |\u(s)|^2\Big)\d s.
\end{align*}
In particular
\begin{align*}
&\mathbb{E}\left[|\u(\t)|^2 \right]\\
&\quad\leq \mathbb{E}|u(0)|^2 +
\frac{1}{\nu}\int_0^{\t}\|f(s)\|^2_{V^{\prime}} \d s  + \varepsilon
KT + \varepsilon K\int_0^{\t}\mathbb{E}\Big( |\u(s)|^2\Big)\d s.
\end{align*}
Applying Gronwall's Inequality, we obtain
\begin{align*}
&\mathbb{E}\left[|\u(\t)|^2 \right]\leq e^{\varepsilon
KT}\left[\mathbb{E}|u(0)|^2 +
\frac{1}{\nu}\int_0^{\t}\|f(s)\|^2_{V^{\prime}} \d s + \varepsilon
KT\right].
\end{align*}
So we get
\begin{align*}
&\mathbb{E}\left[|\u(\t)|^2\right] +
\nu \int_0^{\t}\mathbb{E}\|\u(s)\|^2 \d s \\
&\;\leq \Big(1 + \varepsilon KTe^{\varepsilon
KT}\Big)\Big(\mathbb{E}|u(0)|^2 +
\frac{1}{\nu}\int_0^{\t}\|f(s)\|_{V^\prime}^2ds + \varepsilon
KT\Big).
\end{align*}
Taking the limit as $N \rightarrow \infty$ we have the result
\eqref{energy1}.

\sni To prove \eqref{energy2}, we proceed in the similar way as
above, but we take supremum upto time $\T$ before taking the
expectation in equation \eqref{555},
\begin{align}
&\mathbb{E}\left[\sup_{0\leq t\leq \T}|\u(t)|^2 + \nu \int_0^{\T}\|\u(t)\|^2 \d t\right]\nonumber \\
&\quad\leq \mathbb{E}|u(0)|^2 +
\frac{1}{\nu}\int_0^{\T}\|f(t)\|^2_{V^{\prime}} \d t + \varepsilon K
T
+ \varepsilon K\mathbb{E}\int_0^{\T}\sup_{0\leq s\leq t}|\u(s)|^2\d t \nonumber \\
&\quad\quad + 2\sqrt{\varepsilon} \mathbb{E}\left[\sup_{0\leq t\leq \T}\left|\int_0^t(\sigma_n(s, \u(s)), \u(s)) \d W_n(s)\right|\right] \nonumber \\
&\quad\quad + 2\varepsilon\mathbb{E}\left[\sup_{0\leq t\leq
\T}\left|\int_0^t\int_Z\big(\u(s-),g^n(\u(s-),z)\big)\tilde{N}(ds,dz)\right|\right].\label{3}
\end{align}
Applying Burkholder-Davis-Gundy inequality, condition (H.2) and
Young's inequality to the term $$2\sqrt{\varepsilon}
\mathbb{E}\left[\sup_{0\leq t\leq \T}\left|\int_0^t(\sigma_n(s,
\u(s)), \u(s)) \d W_n(s)\right|\right]$$ we get,
\begin{align}
&2\sqrt{\varepsilon} \mathbb{E}\left[\sup_{0\leq t\leq \T}\left|\int_0^t(\sigma_n(s, \u(s)), \u(s)) \d W_n(s)\right|\right]\nonumber\\
&\quad\leq 2\sqrt{2\varepsilon}  \mathbb{E}\left[\int_0^{\T}|\sigma(s,\u(s))|^2|\u(s)|^2\d s\right]^{1/2}\nonumber\\
&\quad\leq 2\sqrt{2\varepsilon K}  \mathbb{E}\left[\left(\int_0^{\T} \left(1+|\u(t)|^2\right) |\u(t)|^2 \d t \right)^{1/2}\right]\nonumber\\
&\quad\leq 2\sqrt{2\varepsilon K} \mathbb{E}\left[\sup_{0\leq t\leq \T} |\u(t)| \left(\int_0^{\T} \left(1+|\u(t)|^2\right) \d t\right)^{1/2}\right]\nonumber\\
&\quad\quad \left[\textrm{Young's inequality }ab\leq \eta a^2 +
C(\eta)b^2\quad (\eta>0)\right. \nonumber\\&\left.\quad\quad\textrm{
for } C(\eta) =\frac{1}{4\eta}, \textrm{ by
taking }\eta =\frac{1}{8\sqrt{2\varepsilon K}},C(\eta) = 2\sqrt{2\e K}\right]\nonumber\\
&\quad\leq \frac{1}{4}\mathbb{E} \left(\sup_{0\leq t\leq \T} |\u(t)|^2\right) + 8\varepsilon K\mathbb{E}\int_0^{\T} |\u(t)|^2 \d t + 8\varepsilon KT\nonumber\\
&\quad\leq \frac{1}{4}\mathbb{E}\left(\sup_{0\leq t\leq \T}
|\u(t)|^2\right) + 8\varepsilon K\mathbb{E}\int_0^{\T} \sup_{0\leq
s\leq t}|\u(s)|^2 \d t + 8\varepsilon KT.\label{4}
\end{align}
Now again by applying Burkholder-Davis-Gundy inequality, condition
(H.2) and Young's inequality to the term
$$2\varepsilon\mathbb{E}\left[\sup_{0\leq t \leq {\T}}\left|\int_0^t\int_Z\big(\u(s-),g^n(\u(s-),z)\big)\tilde{N}(ds,dz)\right|\right]$$ we get,
\begin{align*}
&2\varepsilon\mathbb{E}\left[\sup_{0\leq t \leq {\T}}\left|\int_0^t\int_Z\big(\u(s-),g^n(\u(s-),z)\big)\tilde{N}(ds,dz)\right|\right]\nonumber\\
&\quad\leq 2\sqrt{2}\varepsilon\mathbb{E}\left[\int_0^{\T}\int_Z\left|\big(\u(s),g^n(\u(s),z)\big)\right|^2\lambda(dz)ds\right]^{1/2}\nonumber\\
&\quad\leq 2\sqrt{2}\varepsilon\mathbb{E}\left[\int_0^{\T}\int_Z|\u(s)|^2|g^n(\u(s),z)|^2\lambda(dz)ds\right]^{1/2}\nonumber\\
&\quad\leq 2\sqrt{2}\varepsilon\mathbb{E}\left[\sup_{0\leq t \leq {\T}}|\u(t)|\left(\int_0^{\T}\int_Z|g^n(\u(s),z)|^2\lambda(dz)ds\right)^{1/2}\right]\nonumber\\
&\quad\leq 2\sqrt{2}\varepsilon\mathbb{E}\left[\sup_{0\leq t \leq {\T}}|\u(t)|\left(\int_0^{\T}K(1 + |\u(s)|^2)\d s\right)^{1/2}\right]\nonumber\\
&\quad\quad \left[\textrm{Young's inequality }ab\leq \eta a^2 +
C(\eta)b^2\quad (\eta>0)\right. \nonumber\\&\left.\quad\quad\textrm{
for } C(\eta) =\frac{1}{4\eta}, \textrm{ by
taking }\eta =\frac{1}{8\sqrt{2}\varepsilon },C(\eta) = 2\sqrt{2}\e \right]\nonumber\\
\end{align*}
\begin{align}
&2\varepsilon\mathbb{E}\left[\sup_{0\leq t \leq {\T}}\left|\int_0^t\int_Z\big(\u(s-),g^n(\u(s-),z)\big)\tilde{N}(ds,dz)\right|\right]\nonumber\\
&\quad\leq \frac{1}{4}\mathbb{E}\left[\sup_{0\leq t \leq {\T}}|\u(t)|^2\right] + 8\varepsilon^2 K \mathbb{E}\int_0^{\T}|\u(t)|^2\d t + 8\varepsilon^2 KT\nonumber\\
&\quad\leq \frac{1}{4}\mathbb{E}\left[\sup_{0\leq t \leq
{\T}}|\u(t)|^2\right] + 8\varepsilon^2
K\mathbb{E}\int_0^{\T}\sup_{0\leq s\leq t}|\u(s)|^2\d t +
8\varepsilon^2 KT.\label{5}
\end{align}
Replace \eqref{4} and \eqref{5} in \eqref{3},
\begin{align}\label{100}
\mathbb{E}\left[\sup_{0\leq t\leq \T}|\u(t)|^2\right] &+ 2\nu \int_0^{\T}\mathbb{E}\|\u(t)\|^2 \d t\nonumber \\
&\leq 2\mathbb{E}|u(0)|^2 + \frac{2}{\nu}\int_0^{\T}\|f(t)\|^2_{V^{\prime}} \d t + 2\varepsilon KT(9+8\varepsilon) \nonumber \\
&\quad+2\varepsilon K (9+8\varepsilon)
\mathbb{E}\int_0^{\T}\sup_{0\leq s\leq t}|\u(s)|^2\d t.
\end{align}
Note $\T\rightarrow T$ a.s. as $N\rightarrow\infty$. Thus taking the limit in the above estimate \eqref{100} as $N\rightarrow\infty$, one can get for all $\varepsilon$
\begin{align}\label{1000}
\mathbb{E}\left[\sup_{0\leq t\leq T}|\u(t)|^2\right] &+ 2\nu \int_0^{T}\mathbb{E}\|\u(t)\|^2 \d t\nonumber \\
&\leq 2\mathbb{E}|u(0)|^2 + \frac{2}{\nu}\int_0^{T}\|f(t)\|^2_{V^{\prime}} \d t + 2\varepsilon KT(9+8\varepsilon) \nonumber \\
&\quad+ 2\varepsilon K(9+8\varepsilon)
\mathbb{E}\int_0^{T}\sup_{0\leq s\leq t}|\u(s)|^2\d t.
\end{align}
In particular
\begin{align}
\mathbb{E}\Big[\sup_{0\leq t\leq T}|\u(t)|^2\Big] &\leq
2\mathbb{E}|u(0)|^2 + \frac{2}{\nu}\int_0^{T}\|f(t)\|^2_{V^{\prime}}
\d t + 2\varepsilon KT(9+8\varepsilon) \nonumber \\&\quad +
2\varepsilon K(9+8\varepsilon) \mathbb{E}\int_0^{T}\sup_{0\leq s\leq
t}|\u(s)|^2\d t.
\end{align}
Now by applying Gronwall's Inequality, we obtain
\begin{align}
&\mathbb{E}\Big[\sup_{0\leq t\leq T}|\u(t)|^2\Big] \nonumber \\
&\leq e^{2\varepsilon KT(9+8\varepsilon)}\left[2\mathbb{E}|u(0)|^2 +
\frac{2}{\nu}\int_0^T\|f(t)\|^2_{V^{\prime}} \d t + 2\varepsilon
KT(9+8\varepsilon)\right].\label{110}
\end{align}
Now using by \eqref{110} in \eqref{1000} one can deduce that
\begin{align}
\mathbb{E}\left[\sup_{0\leq t\leq T}|\u(t)|^2\right] &+ 2\nu \int_0^{T}\mathbb{E}\|\u(t)\|^2 \d t \nonumber \\
&\leq \left(1 + 2\varepsilon KT(9+8\varepsilon)e^{2\varepsilon
KT(9+8\varepsilon)}\right)\nonumber\\&\quad\left(2\mathbb{E}|u(0)|^2
+ \frac{2}{\nu}\int_0^T\|f(t)\|^2_{V^{\prime}} \d t+ 2\varepsilon
KT(9+8\varepsilon)\right).
\end{align}
Hence, we obtain
\begin{align*}
&\mathbb{E}\left[\sup_{0\leq t\leq T}|\u(t)|^2\right] + 2\nu
\int_0^{T}\mathbb{E}\|\u(t)\|^2 \d t \leq C\left(\mathbb{E}|u(0)|^2,
\int_0^T\|f(t)\|^2_{V^{\prime}}\d t,\nu, T\right).
\end{align*}
\end{proof}

\begin{theorem}\label{energya}
Let $f$ be in $\mathrm{L}^2( [0, T], H)$,  $u (0)$ be $\mathcal{F}_0$ measurable,
$\sigma \in C([0, T] \times V; L_Q(H_0; H))$,  $g\in \mathbb{H}^2_{\lambda}([0, T] \times Z; H)$ and $\mathbb{E}|u (0)|^2 < \infty$.
Let $\u$ denote the unique strong solution of the stochastic differential equation \eqref{variational}
in $\mathcal{D}([0, T], H_n)$. Then with $K$ as in condition (H.2), the following estimates hold:\\
For any $\delta > 0$,
\begin{align}\label{energy3}
\mathbb{E}&|\u(t)|^2 e^{-\delta t} + 2\nu\int_0^T \mathbb{E}\|\u(t)\|^2 e^{-\delta t} \d t \nonumber\\
&\leq \left(1 + \varepsilon KTe^{\varepsilon
KT}\right)\left(\mathbb{E}|u(0)|^2 +
\frac{1}{\delta}\int_0^T|f(t)|^2e^{-\delta t}dt + \frac{\varepsilon
K}{\delta}\right),
\end{align}
and for any $\delta >0 $,
\begin{align}\label{energy4}
\mathbb{E}\left[\sup_{0\leq t\leq T} |\u(t)|^2e^{-\delta t}\right] &+ 4\nu\int_0^T\mathbb{E}\|\u(t)\|^2e^{-\delta t}dt \nonumber\\
&\leq C\left(\mathbb{E}|u(0)|^2, \int_0^T|f(t)|^2e^{-\delta t}dt,
\delta, T\right).
\end{align}
\end{theorem}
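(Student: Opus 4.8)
The plan is to rerun the energy computation of Theorem~\ref{energy} with the exponential weight $e^{-\delta t}$ inserted from the outset. The point is that differentiating the weight produces a term $-\delta e^{-\delta t}|\u(t)|^2\,\d t$ which will absorb the forcing term $2(f,\u)$ estimated in the $H$-norm rather than in $V'$; this is exactly why in \eqref{energy3}--\eqref{energy4} the datum $f$ enters only through $\int_0^T|f(t)|^2 e^{-\delta t}\,dt$ and why the factors $1/\delta$ appear.

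First I would apply the semimartingale product rule to $e^{-\delta t}|\u(t)|^2$ (no covariation term, since $e^{-\delta t}$ has finite variation) and combine it with the It\^o expansion of $\d|\u(t)|^2$ already recorded in the proof of Theorem~\ref{energy}. This gives
\begin{align*}
\d\!\left(e^{-\delta t}|\u(t)|^2\right) &+ 2\nu e^{-\delta t}\|\u(t)\|^2\,\d t = -\delta e^{-\delta t}|\u(t)|^2\,\d t + 2 e^{-\delta t}(f(t),\u(t))\,\d t \\
&+ \varepsilon e^{-\delta t}|\sigma_n(t,\u(t))|^2_{L_Q}\,\d t + \varepsilon e^{-\delta t}\int_Z|g^n(\u(t-),z)|^2\,N(\d t,\d z) \\
&+ 2\sqrt{\varepsilon}\, e^{-\delta t}(\sigma_n(t,\u(t)),\u(t))\,\d W_n(t) + 2\varepsilon e^{-\delta t}\int_Z\big(\u(t-),g^n(\u(t-),z)\big)\tilde N(\d t,\d z).
\end{align*}
Using $2 e^{-\delta t}(f,\u)\le \delta e^{-\delta t}|\u|^2 + \frac{1}{\delta} e^{-\delta t}|f|^2$, the first term on the right cancels exactly, and no term quadratic in $\|\u\|$ is ever created, so the full $2\nu e^{-\delta t}\|\u\|^2$ survives on the left.

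Next, as in Theorem~\ref{energy}, I would localize by $\tau_N$, integrate over $[0,\t]$ and take expectations. Since $g^n\in\mathbb{H}^2_\lambda$, the function $|g^n|^2$ is strong $1$-integrable, so (as in the proof of Theorem~\ref{energy}) Lemma~\ref{le1} lets us replace $\mathbb{E}\int_Z|g^n|^2 N(\d s,\d z)$ by $\mathbb{E}\int_Z|g^n|^2\lambda(\d z)\,ds$, while the two stochastic integrals are mean-zero martingales. Invoking (H.2) and $\int_0^T\varepsilon K e^{-\delta s}\,ds\le \varepsilon K/\delta$ yields
$$\mathbb{E}\!\left[e^{-\delta\t}|\u(\t)|^2\right] + 2\nu\!\int_0^{\t}\!\mathbb{E}\!\left[e^{-\delta s}\|\u(s)\|^2\right]ds \le \mathbb{E}|u(0)|^2 + \frac{1}{\delta}\!\int_0^T\! e^{-\delta s}|f(s)|^2\,ds + \frac{\varepsilon K}{\delta} + \varepsilon K\!\int_0^{\t}\!\mathbb{E}\!\left[e^{-\delta s}|\u(s)|^2\right]ds.$$
Dropping the $\|\u\|^2$-integral and applying Gronwall's inequality bounds $\mathbb{E}[e^{-\delta\t}|\u(\t)|^2]$ by $e^{\varepsilon KT}\big(\mathbb{E}|u(0)|^2 + \frac{1}{\delta}\int_0^T e^{-\delta s}|f(s)|^2\,ds + \frac{\varepsilon K}{\delta}\big)$; substituting this back into the displayed inequality and letting $N\to\infty$ (so $\t\to T$ a.s.) gives \eqref{energy3}.

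For \eqref{energy4} I would instead take the supremum over $[0,\T]$ before taking expectations, exactly as in the second half of the proof of Theorem~\ref{energy}, and estimate the two sup-martingale terms by Burkholder--Davis--Gundy. The one new observation is that the Wiener term's quadratic variation is $4\varepsilon\int_0^t e^{-2\delta s}|\sigma_n(s,\u(s))|^2|\u(s)|^2\,ds$; here one uses $e^{-2\delta s}\le e^{-\delta s}$ (valid since $\delta>0$) and $e^{-\delta s}|\u(s)|^2\le\sup_{r\le s}e^{-\delta r}|\u(r)|^2$, followed by Young's inequality and (H.2), to dominate it by $\frac{1}{4}\mathbb{E}\sup_{t\le\T}e^{-\delta t}|\u(t)|^2$ plus terms of the form $\varepsilon K\,\mathbb{E}\int_0^{\T}\sup_{r\le t}e^{-\delta r}|\u(r)|^2\,dt$ and $\varepsilon KT$; the compensated-Poisson sup-term is handled identically, giving a further $\frac{1}{4}\mathbb{E}\sup$. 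Absorbing the total $\frac{1}{2}\mathbb{E}\sup_{t\le\T}e^{-\delta t}|\u(t)|^2$ into the left-hand side, multiplying through by $2$ (which is why $2\nu$ becomes $4\nu$), applying Gronwall and sending $N\to\infty$ produces \eqref{energy4}. I do not expect any essential difficulty beyond bookkeeping: the only genuinely new ingredient is the cancellation against $-\delta e^{-\delta t}|\u|^2$, and the sole delicate point is the $e^{-2\delta s}\le e^{-\delta s}$ estimate inside the Burkholder--Davis--Gundy step, which is what allows the \emph{weighted} supremum to be extracted so that Gronwall's inequality closes the argument.
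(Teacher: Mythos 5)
Your overall route coincides with the paper's: weighted It\^{o} expansion of $e^{-\delta t}|\u(t)|^2$, cancellation of the $-\delta e^{-\delta t}|\u(t)|^2$ term against $2(f(t),\u(t))\le\delta|\u(t)|^2+\frac1\delta|f(t)|^2$, the stopping-time and zero-mean-martingale argument (with Lemma \ref{le1} applied to $|g^n|^2$), Gronwall for \eqref{energy3}, and sup-before-expectation plus Burkholder--Davis--Gundy, absorption of the $\frac12\mathbb{E}\sup$, and Gronwall for \eqref{energy4}. However, the one step you flag as delicate is carried out incorrectly as stated. After bounding the quadratic variation by $4\varepsilon K\int_0^{\T}e^{-2\delta s}\bigl(1+|\u(s)|^2\bigr)|\u(s)|^2\,\d s$, you first apply $e^{-2\delta s}\le e^{-\delta s}$ and then pull out $e^{-\delta s}|\u(s)|^2\le\sup_{r\le s}e^{-\delta r}|\u(r)|^2$. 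This leaves the remaining factor $K\bigl(1+|\u(s)|^2\bigr)$ with \emph{no} exponential weight, so after Young's inequality the residual term is $C\varepsilon K\,\mathbb{E}\int_0^{\T}\bigl(1+|\u(s)|^2\bigr)\d s$, and not, as you claim, $\varepsilon K\,\mathbb{E}\int_0^{\T}\sup_{r\le t}e^{-\delta r}|\u(r)|^2\,\d t+\varepsilon KT$: the inequality $|\u(s)|^2\le\sup_{r\le s}e^{-\delta r}|\u(r)|^2$ is false in general, so the weighted Gronwall argument does not close from the chain of estimates you wrote down.

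The fix is to keep both halves of the weight: split $e^{-2\delta s}=e^{-\delta s}\cdot e^{-\delta s}$ and write the integrand as $\bigl(e^{-\delta s}|\u(s)|^2\bigr)\cdot\bigl(e^{-\delta s}(1+|\u(s)|^2)\bigr)$. Extracting the supremum from the first factor leaves $\int_0^{\T}e^{-\delta s}\bigl(1+|\u(s)|^2\bigr)\d s\le\frac1\delta+\int_0^{\T}e^{-\delta s}|\u(s)|^2\d s$, which after Young's inequality produces precisely the weighted Gronwall term $8\varepsilon K\,\mathbb{E}\int_0^{\T}\sup_{r\le t}e^{-\delta r}|\u(r)|^2\d t$ plus the constant $\frac{8\varepsilon K}{\delta}$; this is exactly how the paper proceeds in its estimates \eqref{10} and \eqref{11}, and the same correction applies verbatim to your compensated-Poisson supremum term, which you propose to treat identically. (Alternatively one could bound the unweighted $\mathbb{E}\int_0^{\T}|\u(s)|^2\d s$ by $Te^{\delta T}$ times the bound \eqref{energy3}, which still yields a constant of the admissible form $C\bigl(\mathbb{E}|u(0)|^2,\int_0^T|f(t)|^2e^{-\delta t}dt,\delta,T\bigr)$, but this detour is unnecessary.) With this one correction, the remainder of your argument --- absorption of the suprema, multiplication by $2$ giving the factor $4\nu$, Gronwall, and letting $N\to\infty$ --- is exactly the paper's proof.
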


\begin{proof}
In order prove this theorem we use the same method as in the previous theorem and also use the same stopping time argument. \\
We consider the function $e^{-\delta t} |\u(t)|^2$ for $\delta>0$ and apply the It\^{o} Lemma to get,
\begin{align}
&\d \left[|\u(t)|^2 e^{-\delta t}\right] + 2\nu\|\u(t)\|^2e^{-\delta t}\d t + \delta |\u(t)|^2 e^{-\delta t}\d t\nonumber \\
&\quad=\left[ 2(f(t),
\u(t)) + \varepsilon \Tr (\sigma_n(t,\u(t)) Q \sigma_n(t,\u(t)))\right]e^{-\delta t}\d t\nonumber\\
&\quad\quad + 2\sqrt{\varepsilon} (\sigma_n(t, \u(t)), \u(t))e^{-\delta t} \d W_n(t) \nonumber\\&\quad\quad+ e^{-\delta t}\varepsilon\int_Z|g^n(\u(t-),z)|^2 N(dt,dz) \nonumber \\
&\quad\quad+ 2e^{-\delta
t}\int_Z\varepsilon\left(\u(t-),g^n(\u(t-),z)\right)\tilde{N}(dt,dz).\label{6}
\end{align}
Note that $$2(f(t),\u(t)) \leq \delta |\u(t)|^2 + \frac{1}{\delta} |f(t)|^2.$$
So from the above relation we get
\begin{align}
&\d \left[|\u(t)|^2 e^{-\delta t}\right] + 2\nu\|\u(t)\|^2e^{-\delta t}\d t\nonumber \\
&\quad\leq\frac{1}{\delta}|f(t)|^2e^{-\delta t}\d t + \varepsilon \Tr (\sigma_n(t,\u(t)) Q \sigma_n(t,\u(t)))e^{-\delta t}\d t\nonumber\\
&\quad\quad + 2\sqrt{\varepsilon} (\sigma_n(t, \u(t)), \u(t))e^{-\delta t} \d W_n(t) \nonumber\\
&\quad\quad+ e^{-\delta t}\int_Z\varepsilon|g^n(\u(t-),z)|^2 N(dt,dz) \nonumber \\
&\quad\quad+ 2e^{-\delta
t}\int_Z\varepsilon\left(\u(t-),g^n(\u(t-),z)\right)\tilde{N}(dt,dz).\label{7}
\end{align}
Hence upon writing \eqref{6} in the integral form, then taking expectation and proceeding as in the previous stopping time given in the proof of Theorem \ref{energy} one can get
\begin{align*}
& \mathbb{E} |\u(t)|^2 e^{-\delta t} + 2\nu \mathbb{E}\int_0^T\|\u(t)\|^2e^{-\delta t}\d t\nonumber \\
&\quad\leq\mathbb{E}|u(0)|^2 + \frac{1}{\delta}\int_0^T|f(t)|^2e^{-\delta t}\d t + \mathbb{E}\int_0^T\varepsilon|\sigma_n(t,\u(t))|^2e^{-\delta t}\d t\nonumber\\
&\quad\quad + \mathbb{E}\int_0^Te^{-\delta
t}\int_Z\varepsilon|g^n(\u(t),z)|^2 \lambda(dz)\d t.
\end{align*}
Since the terms
$$2\sqrt{\varepsilon}\int_0^T (\sigma_n(t, \u(t)), \u(t))e^{-\delta t} \d W_n(t)$$
and
$$2\int_0^T  e^{-\delta t}\varepsilon\int_Z\left(\u(t-),g^n(\u(t-),z)\right)\tilde{N}(dt,dz)$$
are martingales and having zero averages. Now applying (H.2) one can
obtain
\begin{align}
& \mathbb{E} |\u(t)|^2 e^{-\delta t} + 2\nu \mathbb{E}\int_0^T\|\u(t)\|^2e^{-\delta t}\d t\nonumber \\
&\quad\leq\mathbb{E}|u(0)|^2 + \frac{1}{\delta}\int_0^T|f(t)|^2e^{-\delta t}\d t + \frac{\varepsilon K}{\delta}
+ \varepsilon K\int_0^T\mathbb{E}|\u(t)|^2e^{-\delta t}\d t.\label{8}
\end{align}
In particular
\begin{align*}
& \mathbb{E} |\u(t)|^2 e^{-\delta t} \nonumber \\
&\quad\leq\mathbb{E}|u(0)|^2 + \frac{1}{\delta}\int_0^T|f(t)|^2e^{-\delta t}\d t + \frac{\varepsilon K}{\delta}
+ \varepsilon K\int_0^T\mathbb{E}|\u(t)|^2e^{-\delta t}\d t.
\end{align*}
Applying Gronwall's Inequality we get,
\begin{align*}
& \mathbb{E} |\u(t)|^2 e^{-\delta t}\leq e^{\varepsilon
KT}\left[\mathbb{E}|u(0)|^2 +
\frac{1}{\delta}\int_0^T|f(t)|^2e^{-\delta t}\d t +
\frac{\varepsilon K}{\delta}\right].
\end{align*}
By using above relation in \eqref{8} one can deduce that
\begin{align}
& \mathbb{E} |\u(t)|^2 e^{-\delta t} + 2\nu \int_0^T\mathbb{E}\|\u(t)\|^2e^{-\delta t}\d t\nonumber \\
&\quad\leq\left(1+\varepsilon KTe^{\varepsilon
KT}\right)\left(\mathbb{E}|u(0)|^2 +
\frac{1}{\delta}\int_0^T|f(t)|^2e^{-\delta t}\d t +
\frac{\varepsilon K}{\delta} \right).\label{90}
\end{align}
This proves \eqref{energy3}.

\sni Now for getting \eqref{energy4} we proceed as above and taking
supremum  before taking the expectaion, in \eqref{7}
\begin{align}
&\mathbb{E} \left[\sup_{0\leq t\leq T}|\u(t)|^2 e^{-\delta t}\right] + 2\nu \int_0^T\mathbb{E}\|\u(t)\|^2e^{-\delta t}\d t\nonumber \\
&\leq\mathbb{E}|u(0)|^2 + \frac{1}{\delta}\int_0^T|f(t)|^2e^{-\delta t}\d t + \mathbb{E}\sup_{0\leq s\leq T}\int_0^s\varepsilon|\sigma_n(t,\u(t)|^2e^{-\delta t}\d t\nonumber\\
&\quad + 2\sqrt{\varepsilon}\mathbb{E}\sup_{0\leq s\leq T}\int_0^s(\sigma_n(t,\u(t)),\u(t))e^{-\delta t}\d W_n(t) \nonumber\\
&\quad + \mathbb{E}\sup_{0\leq s\leq T}\int_0^s e^{-\delta t}\int_Z\varepsilon|g^n(\u(t-),z)|^2\lambda(dz)\d t \nonumber \\
&\quad + 2\mathbb{E}\sup_{0\leq s\leq T}\int_0^s e^{-\delta t}\int_Z\varepsilon\left(\u(t-),g^n(\u(t-),z)\right)\mathbb{}\tilde{N}(dt,dz)\nonumber \\
&\leq\mathbb{E}|u(0)|^2 + \frac{1}{\delta}\int_0^T|f(t)|^2e^{-\delta t}\d t + \varepsilon K\mathbb{E}\left[\int_0^T\sup_{0\leq s\leq t} |\u(s)|^2e^{-\delta t} \d t\right] + \frac{\varepsilon K}{\delta} \nonumber \\
&\quad + 2\sqrt{\varepsilon}\mathbb{E}\sup_{0\leq s\leq T}\left|\int_0^s(\sigma_n(t,\u(t)),\u(t))e^{-\delta t}\d W_n(t)\right| \nonumber \\
&\quad + 2\varepsilon\mathbb{E}\sup_{0\leq s\leq T}\left|\int_0^s
e^{-\delta
t}\int_Z\left(\u(t-),g^n(\u(t-),z)\right)\tilde{N}(dt,dz)\right|.\label{9}
\end{align}
Next we consider $$2\sqrt{\varepsilon}\mathbb{E}\left[\sup_{0\leq
s\leq T}\left|\int_0^s(\sigma_n(t,\u(t)),\u(t))e^{-\delta t}\d
W_n(t)\right|\right]$$ and applying Burkholder-Davis-Gundy
Inequality, Young's Inequality and condition $(H.2)$, we get
\begin{align}
&2\sqrt{\varepsilon}\mathbb{E}\left[\sup_{0\leq s\leq T}\left|\int_0^s(\sigma_n(t,\u(t)),\u(t))e^{-\delta t}\d W_n(t)\right|\right]\nonumber \\
\quad&\leq2\sqrt{2\varepsilon}\mathbb{E}\left[\int_0^T|\sigma_n(t,\u(t)|^2|\u(t)|^2e^{-2\delta t} \d t\right]^{1/2}\nonumber \\
\quad&\leq2\sqrt{2\varepsilon K}\mathbb{E}\left[\int_0^T\Big(1 + |\u(t)|^2\Big)|\u(t)|^2e^{-2\delta t}\d t\right]^{1/2}\nonumber \\
\quad&\leq2\sqrt{2\varepsilon K}\mathbb{E}\left[\left(\sup_{0\leq t\leq T}|\u(t)|e^{-\delta t/2}\right)\left(\int_0^T\big(1 + |\u(t)|^2\big)e^{-\delta t} \d t\right)^{1/2}\right]\nonumber \\
\quad&\leq\frac{1}{4}\mathbb{E}\Big[\sup_{0\leq t\leq T}|\u(t)|^2e^{-\delta t}\Big] + 8\varepsilon K \mathbb{E}\int_0^T|\u(t)|^2e^{-\delta t}\d t + \frac{8\varepsilon K}{\delta}\nonumber \\
\quad&\leq\frac{1}{4}\mathbb{E}\Big[\sup_{0\leq t\leq T}|\u(t)|^2e^{-\delta t}\Big] + 8\varepsilon K \mathbb{E}\int_0^T\sup_{0\leq s\leq t}|\u(s)|^2e^{-\delta t}\d t + \frac{8\varepsilon K}{\delta}.\label{10}
\end{align}
Now again applying Burkholder-Davis-Gundy Inequality, Young's Inequality and (H.2) to the term
$$2\varepsilon \mathbb{E}\left[\sup_{0\leq s\leq T}\left|\int_0^se^{-\delta t}\int_Z\left(\u(t-),g^n(\u(t-),z)\right)\mathbb{}\tilde{N}(dt,dz)\right|\right]$$
we get,
\begin{align}
&2\varepsilon\mathbb{E}\left[\sup_{0\leq s\leq T}\left|\int_0^se^{-\delta t}\int_Z\left(\u(t),g^n(\u(t-),z)\right)\mathbb{}\tilde{N}(dt,dz)\right|\right]\nonumber \\
&\leq2\sqrt{2}\varepsilon\mathbb{E}\left(\int_0^T\int_Z\left|\left(\u(t),g^n(\u(t),z)\right)e^{-\delta t}\right|^2\lambda(dz)dt\right)^{1/2}\nonumber \\
&\leq 2\sqrt{2}\varepsilon\mathbb{E}\left(\int_0^T\int_Z|g^n(\u(t),z)|^2 |\u(t)|^2e^{-2\delta t}\lambda(dz)\d t\right)^{1/2}\nonumber \\
&\leq 2\sqrt{2}\varepsilon\mathbb{E}\left(\int_0^T K\left(1 + |\u(t)|^2\right)|\u(t)|^2e^{-2\delta t}\lambda(dz)\d t\right)^{1/2}\nonumber \\&\leq 2\sqrt{2}\varepsilon\mathbb{E}\left[\left(\sup_{0\leq t\leq T}|\u(t)|e^{-\delta t/2}\right)\left(\int_0^TK\left(1 + |\u(t)|^2\right)e^{-\delta t}\d t\right)^{1/2}\right]\nonumber \\
&\leq \frac{1}{4}\mathbb{E}\Big[\sup_{0\leq t\leq T}|\u(t)|^2e^{-\delta t}\Big] + 8\varepsilon^2 K\mathbb{E}\int_0^T\sup_{0\leq s\leq t}|\u(s)|^2e^{-\delta t}\d t + \frac{8\varepsilon^2 K}{\delta}.\label{11}
\end{align}
By applying \eqref{10} and \eqref{11} in \eqref{9} one can deduce that
\begin{align}
\mathbb{E}\left[\sup_{0\leq t\leq T}|\u(t)|^2e^{-\delta t}\right] &+ 4\nu\int_0^T\mathbb{E}\|\u(t)\|^2e^{-\delta t}\d t
\nonumber \\ &\leq2\mathbb{E}|u(0)|^2 + \frac{2}{\delta}\int_0^T|f(t)|^2e^{-\delta t}\d t +
\frac{2\varepsilon K(9+8\varepsilon)}{\delta} \nonumber \\&\quad+ 2\varepsilon K(9+8\varepsilon)\mathbb{E}\int_0^T\sup_{0\leq s\leq t}|\u(s)|^2e^{-\delta t}\d t .\label{12}
\end{align}
From the above expression one can write,
\begin{align*}
\mathbb{E}\left[\sup_{0\leq t\leq T}|\u(t)|^2e^{-\delta t}\right]&\leq2\mathbb{E}|u(0)|^2 + \frac{2}{\delta}\int_0^T|f(t)|^2e^{-\delta t}\d t \nonumber \\ & +\frac{2\varepsilon K(9+8\varepsilon)}{\delta} + 2\varepsilon K(9+8\varepsilon)\mathbb{E}\int_0^T\sup_{0\leq s\leq t}|\u(s)|^2e^{-\delta t}\d t .
\end{align*}
Applying Gronwall's Inequality
\begin{align}
&\mathbb{E}\left[\sup_{0\leq t\leq T}|\u(t)|^2e^{-\delta t}\right]\nonumber \\
&\leq e^{2\varepsilon KT(9+8\varepsilon)}\Big[2\mathbb{E}|u(0)|^2 +
\frac{2}{\delta}\int_0^T|f(t)|^2e^{-\delta t}\d t +
\frac{2\varepsilon K(9+8\varepsilon)}{\delta}\Big].\label{13}
\end{align}
Using \eqref{13} in \eqref{12} we get
\begin{align}
\mathbb{E}\left[\sup_{0\leq t\leq T}|\u(t)|^2e^{-\delta t}\right] &+ 4\nu\int_0^T\mathbb{E}\|\u(t)\|^2e^{-\delta t}\d t\nonumber \\
&\leq\left(1 + 2\varepsilon KT(9+8\varepsilon)e^{2\varepsilon
KT(9+8\varepsilon)}\right)\nonumber
\\&\quad\left(2\mathbb{E}|u(0)|^2 +
\frac{2}{\delta}\int_0^T|f(t)|^2e^{-\delta t}\d t +
\frac{2\varepsilon K(9+8\varepsilon)}{\delta}\right) .\label{14}
\end{align}
By using the above we get the required result \eqref{energy4}
\end{proof}

\begin{definition}($Strong\ Solution$)
A strong solution $\ue$ is defined on a given probability
space $(\Omega, \mathcal{F}, \mathcal{F}_{t}, P)$ as a
$\mathrm{L}^2(\Omega;\mathrm{L}^{\infty
}(0,T; H)\cap
\mathrm{L}^2(0,T; V)\cap
\mathcal{D}(0,T; H))$ valued adapted process which
satisfies the stochastic GOY model
\begin{align}\label{15}
&\d \ue + \big[\nu A\ue + B(\ue, \ue)\big] \d t = f(t) \d t + \sqrt{\e}\sigma(t, \ue) \d W(t) + \varepsilon\int_Zg(\ue,z)\tilde{N}(dt,dz)\\
& \ue(0) = u_0,\nonumber
\end{align}
 in the
weak sense and also the energy inequalities in Theorem \ref{energy} and Theorem\ref{energya}.
\end{definition}

%%%%Theorem %Existence of the strong solution %%%%%%%%%%%%%%%%%%%%%%%%%%%%%%%%%%%%%%%%%%%%%%%%%%%%%%%%%%%%%%%%%%%%%%%%%%%%%%%%%%

\begin{theorem}\label{existence}
Let  $u (0)$ be $\mathcal{F}_0$ measurable and $\ \mathbb{E}|u_0|^2
< \infty.$ Let $f\in \mathrm{L}^2(0, T; V^{\prime}).$ We also assume
that $0 < \e < \frac{\nu}{C}$ and the diffusion coefficient
satisfies the conditions (H.1)-(H.3). Then there exists unique
adapted process $\ue(t, x, w)$ with the regularity
$$\ue \in \mathrm{L}^2(\Omega; \mathcal{D}(0, T; H) \cap \mathrm{L}^2(0, T; V))$$
satisfying the stochastic GOY model \eqref{15} and the a priori
bounds in Theorem \ref{energy} and Theorem \ref{energya} .
\end{theorem}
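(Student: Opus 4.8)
The plan is to follow the Galerkin / local--monotonicity scheme (in the spirit of Menaldi--Sritharan, adapted to shell models): realize the solution as a weak limit of the finite--dimensional approximants $\u$ of \eqref{variational}, and identify the weak limits of the nonlinearity and of the noise coefficients by a Minty--type argument resting on the local monotonicity of Lemma \ref{Mon}. First I would note that, for each $n$, \eqref{variational} is a finite--dimensional It\^o--L\'evy equation on $H_n$ whose drift $P_n(F(\cdot)+f)$ is locally Lipschitz (a polynomial in the coordinates) and whose coefficients $\sigma_n,g^n$ are globally Lipschitz with linear growth by (H.2)--(H.3); hence it has a unique local strong solution, and the bound $\mathbb E\sup_{t\le T}|\u(t)|^2<\infty$ from Theorem \ref{energy} rules out explosion, giving a unique $\u\in\mathcal D([0,T];H_n)$. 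By Theorems \ref{energy} and \ref{energya} (whose estimates continue to hold for $f\in L^2(0,T;V')$), the family $\{\u\}_n$ is bounded uniformly in $n$ in $L^2(\Omega;L^\infty(0,T;H))\cap L^2(\Omega;L^2(0,T;V))$, and --- applying It\^o to $|\u(t)|^{2p}$ exactly as in those proofs --- in the analogous higher--moment spaces, so that by Lemma \ref{Bprop1}(ii) the sequence $B(\u,\u)$ is bounded in $L^2(\Omega;L^2(0,T;V'))$, while by (H.2) the sequences $\sigma_n(\cdot,\u)$ and $g^n(\u(\cdot-),\cdot)$ are bounded in $L^2(\Omega\times(0,T);L_Q)$ and in $\mathbb H^2_\lambda([0,T]\times Z;H)$.

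Next I would pass to a subsequence along which $\u\rightharpoonup\ue$ in $L^2(\Omega;L^2(0,T;V))$ and weak-$*$ in $L^2(\Omega;L^\infty(0,T;H))$, $\u(T)\rightharpoonup\ue(T)$ in $L^2(\Omega;H)$, $B(\u,\u)\rightharpoonup B_0$ in $L^2(\Omega;L^2(0,T;V'))$, $\sigma_n(\cdot,\u)\rightharpoonup S$ in $L^2(\Omega\times(0,T);L_Q)$, and $g^n(\u(\cdot-),\cdot)\rightharpoonup G$ in $\mathbb H^2_\lambda([0,T]\times Z;H)$. Since the It\^o integral against $W$ and the integral against $\tilde{N}(\d t,\d z)$ are bounded linear, hence weakly continuous, maps into $L^2(\Omega;\mathcal D(0,T;H))$ (the latter an isometry, by Section 2) and $W_n\to W$, passing to the limit in \eqref{variational} shows that $\ue$ solves
$$\d\ue+\big[\nu A\ue+B_0\big]\,\d t=f(t)\,\d t+\sqrt{\e}\,S(t)\,\d W(t)+\e\int_Z G(t,z)\,\tilde{N}(\d t,\d z),\qquad \ue(0)=u_0,$$
as an identity in $V'$; then $\ue\in\mathcal D(0,T;H)$ by the standard regularity argument for such equations, and by weak lower semicontinuity $\ue$ inherits the energy inequalities of Theorems \ref{energy} and \ref{energya}. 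It remains --- and this is the crux --- to prove $B_0=B(\ue,\ue)$, $S=\sigma(\cdot,\ue)$ and $G=g(\ue,\cdot)$.

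For the identification I would run a Minty--type argument adapted to the \emph{local} monotonicity of Lemma \ref{Mon}, using a random exponential weight to absorb the term $\frac{r^4}{\nu^3}|u-v|^2$. For an adapted test process $v$ with sufficiently many moments and $\|v\|_{L^\infty(0,T;H)}<\infty$ a.s.\ (so that, by $\|v(t)\|_{l^4}\le|v(t)|$, one has $v(t)\in\mathbb B_{\|v(t)\|_{l^4}}$ for a.e.\ $t$), set $r_v(t):=\frac{2}{\nu^3}\int_0^t\|v(s)\|_{l^4}^4\,\d s$ and apply the It\^o formula to $e^{-r_v(t)}|\u(t)|^2$, $e^{-r_v(t)}|\ue(t)|^2$ and $e^{-r_v(t)}(\u(t),v(t))$. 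Because Lemma \ref{Mon}, applied with radius $\|v(t)\|_{l^4}$ (legitimate for the restricted range of $\e$), makes the drift integrand $2(F(\u)-F(v),\u-v)-r_v'(t)|\u-v|^2+2\e\big[|\sigma_n(\cdot,\u)-\sigma(\cdot,v)|^2_{L_Q}+\int_Z|g^n(\u,z)-g(v,z)|^2\lambda(\d z)\big]$ nonpositive, taking $\liminf_n$ with the weak convergences above together with the weak lower semicontinuity of the quadratic functionals gives, for every admissible $v$,
$$\mathbb E\!\int_0^T\! e^{-r_v(t)}\Big[2\langle F_0-F(v),\ue-v\rangle+\e|S-\sigma(t,v)|^2_{L_Q}+\e\!\int_Z\!|G-g(v,z)|^2\lambda(\d z)-r_v'(t)|\ue-v|^2\Big]\,\d t\le 0.$$
Choosing first $v=\ue$ forces $S=\sigma(\cdot,\ue)$ and $G=g(\ue,\cdot)$ $\lambda\otimes\d t\otimes P$-a.e.; then choosing $v=\ue-\mu\phi$ for bounded adapted $\phi$, dividing by $\mu>0$ and letting $\mu\downarrow 0$ --- the Lipschitz bounds (H.3) killing the noise terms and the hemicontinuity of $F$ (from the bilinearity of $B$, cf.\ Lemmas \ref{Bprop1} and \ref{Bprop2}) handling the drift --- yields $\mathbb E\int_0^T e^{-r_{\ue}(t)}\langle F_0-F(\ue),\phi\rangle\,\d t\ge 0$ for all such $\phi$, whence $B_0=B(\ue,\ue)$. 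Thus $\ue$ is a strong solution of \eqref{15} with the required regularity.

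Uniqueness follows by the same mechanism: for two solutions $\ue_1,\ue_2$ with the stated regularity, put $w=\ue_1-\ue_2$ and $r(t)=\frac{2}{\nu^3}\int_0^t\|\ue_2(s)\|_{l^4}^4\,\d s$; It\^o applied to $e^{-r(t)}|w(t)|^2$, Lemma \ref{Mon} with $v=\ue_2$, and the vanishing mean of the martingale part (after localizing by the stopping times $\tau_N$ and letting $N\to\infty$) give $\mathbb E[e^{-r(t)}|w(t)|^2]+\nu\,\mathbb E\int_0^t e^{-r(s)}\|w(s)\|^2\,\d s\le\mathbb E|w(0)|^2=0$, so $w\equiv 0$ a.s. The main obstacle is the identification step: since $B$ is genuinely nonlinear and only weak convergence of the Galerkin sequence is available, everything hinges on the monotonicity structure, and the delicate points are (i) that the monotonicity is only \emph{local} (valid on $l^4$-balls), which is precisely what forces the random radius $\|v(t)\|_{l^4}$ and the random weight $e^{-r_v(t)}$; (ii) the careful tracking of all cross terms, moments and lower--semicontinuity inequalities needed to make the Minty argument close and to let $v=\ue$ be an admissible test process; and (iii) the justification of the infinite--dimensional (Gelfand--triple, L\'evy) It\^o formula for the limit $\ue$, which a priori only lies in $L^2(0,T;V)$ with c\`{a}dl\`{a}g paths in $H$.
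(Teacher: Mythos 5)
Your proposal is correct and follows essentially the same route as the paper: Galerkin approximation with the a priori bounds of Theorems \ref{energy} and \ref{energya}, weak limits $F_0^{\e}, S^{\e}, G^{\e}$ via Banach--Alaoglu, identification through the local monotonicity of Lemma \ref{Mon} combined with the exponential weight $e^{-r(t)}$, the choice $\ve=\ue$ to identify the noise coefficients and the perturbation $\ve=\ue-\mu\we$ with $\mu\downarrow 0$ to identify the drift, and uniqueness by the same weighted It\^{o} computation using \eqref{monotone2} and (H.3). The delicate points you flag (admissibility of $\ve=\ue$, the infinite-dimensional It\^{o} formula for the limit) are handled no more explicitly in the paper than in your sketch, so there is no substantive difference.
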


\begin{proof} Part I(Existence)\\
Using the a priori estimate in the Theorem \ref{energy} and Theorem
\ref{energya}, it follows from the Banach-Alaoglu theorem that along
a subsequence, the Galerkin approximations $\{\u\}$ have the
following limits:
\begin{align}
&\u\longrightarrow \ue\quad  \text {weak star in}\ \mathrm{L}^2(\Omega ;
  \mathrm{L}^{\infty}(0,T; H)) \cap\mathrm{L}^{2}
  (\Omega;\mathrm{L}^{2}(0,T; V)),\nonumber\\
& F(\u)\longrightarrow F^{\e}_0\quad \text{weakly in}\
\mathrm{L}^{2}(\Omega;\mathrm{L}^{2}(0,T; V^{\prime})),\nonumber\\
& \sigma_n(\cdot, \u) \longrightarrow S^{\e}\quad \text{weakly in}\ \mathrm{L}^{2}(\Omega;\mathrm{L}^{2}(0,T; \mathrm{L}_Q))\nonumber \\
&g^n(\u,\cdot)\longrightarrow G^\varepsilon\quad \text{weakly in}\
\mathbb{H}^2_{\lambda}([0, T] \times Z; H).\label{16}
\end{align}
The assertion of the second statement holds since $F(\u)$ is bounded
in \\ $\mathrm{L}^{2}(\Omega;\mathrm{L}^{2}(0,T; V^{\prime}))$.
Likewise since diffusion coefficient has the linear growth property
and $\u$ is bounded in $\mathrm{L}^2(0, T; V)$ uniformly in $n$, the
last two statements hold. Then $\ue$ has the It\^{o} differential
\begin{align*}
\d \ue(t) = F^{\e}_0(t)\d t &+ \sqrt{\e} S^{\e}(t)\d
W(t)+\varepsilon\int_ZG^\varepsilon(t)\tilde{N}(dt,dz)\nonumber \\
&\quad \text{weakly in}\
\mathrm{L}^{2}(\Omega;\mathrm{L}^{2}(0,T;V^\prime)).
\end{align*}
Let us set,
\begin{equation}\label{17}
r(t):= \frac{2}{\nu^3}\int_0^t \|\ve(s)\|_{\mathrm{L}^4}^4 \d s,
\end{equation}
where $\ve(t, x, \omega)$ is any adapted process in
$\mathrm{L}^{\infty}(\Omega \times (0, T); H)$. Here we suppress the
dependence of $\varepsilon$ in the notation of $r$ to make it easier
to read. Then applying the It\^{o} Lemma to the function $2e^{-r(t)}
|\u(t)|^2$, one obtains
\begin{align*}
\d \left[e^{-r(t)} |\u(t)|^2\right] &= e^{-r(t)}\big(2F(\u(t)) - \dot{r}(t) \u(t), \u(t)\big)\d t \\
&\quad + \e e^{-r(t)} |\sigma_n(t, \u(t))|^2_{\mathrm{L}_Q}\d t
\\ & \quad+ 2\sqrt{\e} e^{-r(t)}\big(\sigma_n(t, \u(t)), \u(t)\big)\d W(t) \\ & \quad + e^{-r(t)}\epsilon\int_Z\big|g^n(\u(t-),z)\big|^2N(dt,dz) \\
& \quad+
2e^{-r(t)}\varepsilon\int_Z\left(\u(t-),g^n(\u(t-),z)\right)\tilde{N}(dt,dz).
\end{align*}
Integrating between $0 \leq t\leq T$ and taking expectation,
\begin{align*}
&\mathbb{E}\left[e^{-r(T)} |\u(T)|^2 - |\u(0)|^2\right]\\
&\quad= \mathbb{E}\left[\int_0^T e^{-r(t)}\left(2F(\u(t)) - \dot{r}(t) \u(t), \u(t)\right)\d t\right]\\
&\quad\quad + \mathbb{E}\int_0^T e^{-r(t)} \e|\sigma_n(t, \u(t))|^2_{\mathrm{L}_Q}\d t\\
&\quad\quad + 2\sqrt{\e} \mathbb{E}\int_0^T e^{-r(t)}\big(\sigma_n(t, \u(t)), \u(t)\big)\d W(t)\\
&\quad\quad + \mathbb{E}\varepsilon\int_0^T e^{-r(t)}\int_Z\big|g^n(\u(t),z)\big|^2\lambda(dz)\d t\\
&\quad\quad +
2\mathbb{E}\int_0^Te^{-r(t)}\varepsilon\int_Z\left(\u(t-),g^n(\u(t-),z)\right)\tilde{N}(dt,dz).
\end{align*}
But the terms
$$2\sqrt{\varepsilon}\int_0^T e^{-r(t)}\big(\sigma_n(t, \u(t)), \u(t)\big)\d W(t)$$
and
$$2\int_0^Te^{-r(t)}\varepsilon\int_Z\left(\u(t-),g^n(\u(t-),z)\right)\tilde{N}(dt,dz)$$
are martingales and having zero averages. Hence we get
\begin{align*}
&\mathbb{E}\left[e^{-r(T)} |\u(T)|^2 - |\u(0)|^2\right]\\
&\quad= \mathbb{E}\left[\int_0^T e^{-r(t)}\big(2F(\u(t)) - \dot{r}(t) \u(t), \u(t)\big)\d t\right]\\
&\quad\quad + \mathbb{E}\int_0^T e^{-r(t)} \e|\sigma_n(t, \u(t))|^2_{\mathrm{L}_Q}\d t\\
&\quad\quad + \mathbb{E}\int_0^T e^{-r(t)}\varepsilon\int_Z\big|g^n(\u(t),z)\big|^2\lambda(dz)\d t\\
\end{align*}
Then by the lower semi-continuity property of the weak convergence,
\begin{align*}
&\liminf_n \mathbb{E}\left[\int_0^T e^{-r(t)}\big(2F(\u(t)) - \dot{r}(t) \u(t), \u(t)\big)\d t \nonumber \right.\\
&\left.\quad + \int_0^T e^{-r(t)} \e|\sigma_n(t, \u(t))|^2_{\mathrm{L}_Q}\d t + \int_0^T e^{-r(t)}\varepsilon\int_Z\big|g^n(\u(t),z)\big|^2\lambda(dz)\d t\right]\nonumber\\
&\quad\quad = \liminf_n \mathbb{E}\left[e^{-r(T)} |\u(T)|^2 - |\u(0)|^2\right]\nonumber\\
&\quad\quad \geq \mathbb{E}\left[e^{-r(T)} |\ue(T)|^2 - |\ue(0)|^2\right]\nonumber\\
&\quad\quad = \mathbb{E}\left[\int_0^T e^{-r(t)}\big(2F_0^{\e}(t) - \dot{r}(t) \ue(t), \ue(t)\big)\d t +\e \int_0^T e^{-r(t)} |S^{\e}|^2_{\mathrm{L}_Q}\d t\nonumber\right.\\ &\left.\quad\quad\quad\quad\quad\qquad\qquad + \int_0^Te^{-r(t)}\varepsilon\int_Z|G^\varepsilon|^2\lambda(dz)\d t\right]
\end{align*}
Hence we get
\begin{align}
&\liminf_n \mathbb{E}\left[\int_0^T e^{-r(t)}\big(2F(\u(t)) - \dot{r}(t) \u(t), \u(t)\big)\d t \nonumber \right.\\
&\left.\quad \quad + \int_0^T e^{-r(t)} \e|\sigma_n(t, \u(t))|^2_{\mathrm{L}_Q}\d t + \int_0^T e^{-r(t)}\varepsilon\int_Z\big|g^n(\u(t),z)\big|^2\lambda(dz)\d t\right]\nonumber\\
&\quad\quad\geq\mathbb{E}\left[\int_0^T e^{-r(t)}\big(2F_0^{\e}(t) - \dot{r}(t) \ue(t), \ue(t)\big)\d t +\e \int_0^T e^{-r(t)} |S^{\e}|^2_{\mathrm{L}_Q}\d t\nonumber\right.\\ &\left.\quad\quad\quad\quad\quad\qquad\qquad + \int_0^Te^{-r(t)}\varepsilon\int_Z|G^\varepsilon|^2\lambda(dz)\d t\right].\label{lsc}
\end{align}
Now by monotonicity property from Lemma \ref{Mon},
\begin{align*}
& 2\mathbb{E}\left[\int_0^T e^{-r(t)} \left(F(\u(t)) - F(\ve(t)), \u(t) - \ve(t)\right)\d t\right]  \\
&\quad - \mathbb{E}\left[\int_0^T e^{-r(t)} \dot{r}(t)|\u(t) - \ve(t)|^2 \d t\right]  \\
&\quad +  \mathbb{E}\left[\int_0^T e^{-r(t)}\e |\sigma_n(t, \u(t)) - \sigma_n(t, \ve(t))|^2_{\mathrm{L}_Q}\d t\right]\\
&\quad + \mathbb{E}\left[\int_0^Te^{-r(t)}\int_Z\varepsilon|g^n(\u(t),z)-g^n(\ve(t),z)|^2\lambda(dz)\d t\right]\\
&\quad \leq 0.
\end{align*}
Rearranging the terms,
\begin{align*}
& \mathbb{E}\left[\int_0^T e^{-r(t)}\big(2F(\u(t)) - \dot{r}(t) \u(t), \u(t)\big)\d t \right.\\
&\left.\quad +\int_0^T e^{-r(t)} \e |\sigma_n(t, \u(t))|^2_{\mathrm{L}_Q}\d t\ + \int_0^Te^{-r(t)}\int_Z\varepsilon|g^n(\u(t),z)|^2\lambda(dz)\d t\right]\\
&\quad\quad\leq \mathbb{E}\left[\int_0^T e^{-r(t)}\left(2F(\u(t))-\dot{r}(t)(2\u(t) - \ve(t)), \ve(t)\right) \d t\right] \\
&\quad\quad\quad + \mathbb{E}\left[\int_0^T e^{-r(t)}\left(2F(\ve(t)), \u(t)-\ve(t)\right) \d t\right] \\
&\quad\quad\quad +  \e \mathbb{E}\left[\int_0^T e^{-r(t)}\big(2\sigma_n(t, \u(t))-\sigma_n(t, \ve(t)), \sigma_n(t, \ve(t))\big)_{\mathrm{L}_Q}\d t\right]
\\&\quad\quad\quad + \varepsilon\mathbb{E}\left[\int_0^Te^{-r(t)}\int_Z\big(2g^n(\u(t),z)-g^n(\ve(t),z),g^n(\ve(t),z)\big)\lambda(dz)\d t\right].
\end{align*}
Taking limit in $n$, using the result from \eqref{lsc}, we get
\begin{align*}
&\mathbb{E}\left[\int_0^T e^{-r(t)}\big(2F_0^{\e}(t) - \dot{r}(t) \ue(t), \ue(t)\big)\d t +\e \int_0^T e^{-r(t)} |S^{\e}|^2_{\mathrm{L}_Q}\d t\nonumber\right.\\ &\left.\quad\quad + \int_0^Te^{-r(t)}\int_Z\varepsilon|G^\varepsilon|^2\lambda(dz)\d t\right] \\
&\quad\quad\ \leq \mathbb{E}\left[\int_0^Te^{-r(t)}\left(2F_0^{\e}(t)-\dot{r}(t)(2\ue(t) - \ve(t)), \ve(t)\right) \d t\right] \\
&\quad\quad\quad + \mathbb{E}\left[\int_0^T e^{-r(t)}\left(2F(\ve(t)), \ue(t)-\ve(t)\right) \d t\right] \\
&\quad\quad\quad +  \e \mathbb{E}\left[\int_0^T e^{-r(t)}\left(2S^{\e}(t)-\sigma(t, \ve(t)), \sigma(t, \ve(t))\right)_{\mathrm{L}_Q}\d t\right] \\&\quad\quad\quad + \e\mathbb{E}\left[\int_0^Te^{-r(t)}\int_Z\left(2G^{\e}(t)-g(\ve(t),z),g(\ve(t),z)\right)\lambda(dz)\d t\right].
\end{align*}
Rearranging the terms, we obtain
\begin{align*}
&\mathbb{E}\left[\int_0^T e^{-r(t)}\left(2F_0^{\e}(t)-2F(\ve(t)), \ue(t)-\ve(t)\right)\d t\right] \\
&\quad + \mathbb{E}\left[\int_0^T e^{-r(t)}\dot{r}(t) |\ue(t) - \ve(t)|^2 \d t\right]\\
&\quad + \e \mathbb{E}\left[\int_0^T e^{-r(t)} \|S(t)-\sigma(t, \ve(t))\|^2_{\mathrm{L}_Q}\d t\right]\\
&\quad + \varepsilon\mathbb{E}\left[\int_0^Te^{-r(t)}\int_Z\|G(t)-g(\ve(t),z)\|^2\lambda(dz)\d t\right]\\
&\leq 0.
\end{align*}
Notice that for $\ve=\ue$, $S(t)=\sigma(t, \ue(t))$ and $G(t)=g(\ue(t),z)$. Take $\ve = \ue - \mu\we$ with $\mu >0$ and $\we$ is an adapted process in $\mathrm{L}^2(\Omega; \mathcal{D}(0, T; H) \cap \mathrm{L}^2(0, T; V))$. Then,
\begin{align*}
&\mu \mathbb{E}\left[\int_0^T e^{-r(t)}\big(2F_0^{\e}(t)-2F(\ue - \mu\we)(t), \we(t)\big)\d t + \mu\int_0^T e^{-r(t)}\dot{r}(t)|\we(t)|^2\d t\right]\\
&\quad\leq 0.
\end{align*}
Dividing by $\mu$ on both sides of the inequality above and letting $\mu$ go to $0$, one obtains
\begin{align*}
\mathbb{E}\left[\int_0^T e^{-r(t)}\left(F_0^{\e}(t)-F(\ue(t)), \we(t)\right)\d t\right] \leq 0.
\end{align*}
Since $\we$ is arbitrary, we conclude that $F_0^{\e}(t)=F(\ue(t))$.
Thus the existence of the strong solution of the stochastic GOY model \eqref{15} has been proved.\\

\noindent Part II(Uniqueness)\\
If $\ve\in \mathrm{L}^2(\Omega; \mathcal{D}(0, T; H) \cap
\mathrm{L}^2(0, T; V))$ be another solution of the equation
\eqref{15} then $\we=\ue-\ve$ solves the stochastic differential
equation in $\mathrm{L}^2(\Omega;\mathrm{L}^2(0, T; V^\prime))$,
\begin{align}\label{17}
\d \we(t) = (F(\ue(t)) - F(\ve(t)))\d t &+ \sqrt{\e}(\sigma(t, \ue(t)) - \sigma(t, \ve(t))) \d W(t)\nonumber \\
&+\int_Z[g(\ue(t-),z)-g(\ve(t-),z)]\tilde{N}(dt,dz).
\end{align}
We denote $\sigma_d = \sigma(t, \ue(t)) - \sigma(t, \ve(t))$ and $g_d = g(\ue(t-),z)-g(\ve(t-),z)$.

We now apply It\^{o} Lemma to the function $2e^{-r(t)} |\we(t)|^2$,we get
\begin{align*}
\d \left[e^{-r(t)}  |\we(t)|^2\right] &= \left[-e^{-r(t)}\dot{r}(t)|\we(t)|^2 + 2e^{-r(t)}\left(F(\ue(t))-F(\ve(t)),\we(t)\right) \nonumber\right.\\
&\left.\quad + \e e^{-r(t)} \Tr(\sigma_d Q \sigma_d)\right]\d t + 2\sqrt{\e} e^{-r(t)} (\sigma_d, \we(t)) \d W(t)\nonumber \\
&\quad + e^{-r(t)}\varepsilon\int_Z|g_d|^2N(dt,dz) +
2e^{-r(t)}\varepsilon\int_Z\big(\we(t),g_d\big)\tilde{N}(dt,dz).
\end{align*}

Now using the local monotonicity of the sum of the linear and
nonlinear operators $A$ and $B$, e.g. equation \eqref{monotone2},we obtain
\begin{align*}
\d \left[e^{-r(t)}  |\we(t)|^2\right] & + \nu \|\we(t)\|^2e^{-r(t)}\d t \nonumber\\
& \leq \e e^{-r(t)} |\sigma_d|^2\d t + 2\sqrt{\e} e^{-r(t)} (\sigma_d, \we(t)) \d W(t)\nonumber \\
& \quad + e^{-r(t)}\int_Z\varepsilon|g_d|^2N(dt,dz) +
2e^{-r(t)}\varepsilon\int_Z\big(\we(t),g_d\big)\tilde{N}(dt,dz).
\end{align*}
Now integrating from $0\leq t\leq T$ and taking the expectation on both sides and noting that $ \varepsilon < \frac{\nu}{L}$. Also using the fact that
$$2\sqrt{\e}\int_0^T e^{-r(t)} (\sigma_d, \we(t)) \d W(t)$$ and $$2\int_0^Te^{-r(t)}\varepsilon\int_Z\big(\we(t),g_d\big)\tilde{N}(dt,dz)$$
are martingales having zero averages, we get
\begin{align*}
&\mathbb{E}\left[e^{-r(t)}|\we(t)|^2\right] + \nu\mathbb{E}\int_0^Te^{-r(t)}\|\we(t)\|^2\d t  \\
&\quad \leq \mathbb{E}|w(0)|^2 + \mathbb{E}\int_0^Te^{-r(t)}\e |\sigma_d|^2\d t + \mathbb{E}\int_0^Te^{-r(t)}\int_Z\varepsilon|g_d|^2\lambda(dz)\d t
\end{align*}
Using condition (H.3), one can deduce that
\begin{align*}
\mathbb{E}\left[e^{-r(t)}|\we(t)|^2\right] + \left(\nu-\varepsilon L\right)\int_0^Te^{-r(t)}\|\we(t)\|^2\d t \leq\mathbb{E}|w(0)|^2
\end{align*}
Sine $\varepsilon < \frac{\nu}{L}$, we obtain P-a.s.
\begin{align*}
\mathbb{E}\left[e^{-r(t)}|\we(t)|^2\right] \leq \mathbb{E}|w(0)|^2,
\end{align*}
which assures the uniqueness of the strong solution.
\end{proof}

\begin{corollary}
The existence and uniqueness of the strong solution of the stochastic GOY model
\begin{align*}
&\d \ue + \big[\nu A\ue + B(\ue, \ue)\big] \d t = f(t) \d t + \sqrt{\e}\sigma(t, \ue) \d W(t) +\int_Zg(\ue,z)\tilde{N}(dt,dz)\\
& \ue(0) = u_0,\nonumber
\end{align*}
can be proved similarly for the adapted process $\ue(t,x,\omega)$
with the regularity
$$\ue \in \mathrm{L}^2(\Omega; \mathcal{D}(0, T; H) \cap \mathrm{L}^2(0, T; V))$$ under the hypotheses
\begin{itemize}
\item[A.1.]  The function $\sigma \in C([0, T] \times V; L_Q(H_0; H))$, and $g\in \mathbb{H}^2_{\lambda}([0, T] \times Z; H)$.

\item[A.2.]  For all $t \in (0, T)$, there exists a positive constant $K$ such that for all $u \in H$,
$$\varepsilon|\sigma(t, u)|^2_{L_Q} + \int_{Z} |g(u, z)|^2_{H}\lambda(dz) \leq K(1 +\|u\|^2).$$

\item[A.3.]  For all $t \in (0, T)$,  there exists a positive constant $L$ such that for all $\textbf{u}, \textbf{v} \in H$,
$$\varepsilon|\sigma(t, u) - \sigma(t, v)|^2_{L_Q} + \int_{Z} |g(u, z)-g(v, z)|^2_{H}\lambda(dz)\leq L\|u - v\|^2.$$
\end{itemize}
\end{corollary}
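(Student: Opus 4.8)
The plan is to run through the proofs of Theorems~\ref{energy}, \ref{energya} and \ref{existence} once more, the single structural change being that in (A.2)--(A.3) the right-hand sides are now controlled by the stronger norm $\|\cdot\|$ of $V$ rather than by $|\cdot|$. The absence of the factor $\e$ in front of the jump integral is immaterial: one simply carries $g$ (in place of $\e g$) through the It\^{o} expansion, and the diffusion term $\sqrt{\e}\sigma$ is treated exactly as before. Since the operators $A$, $B$ and the drift $\nu Au+B(u,u)$ are untouched, Lemmas~\ref{Bprop1}, \ref{Bprop2}, \ref{La}, and in particular the pointwise estimate
\[
(F(u)-F(v),w)\le -\frac{\nu}{2}\|w\|^2+\frac{r^4}{\nu^3}|w|^2,\qquad w=u-v,\ v\in\mathbb{B}_r,
\]
established in the proof of Lemma~\ref{Mon} (inequality \eqref{monotone2}), remain valid verbatim.

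First I would re-derive the a priori estimates for the Galerkin system \eqref{variational} (with jump coefficient $g^n$ replacing $\e g^n$). Applying It\^{o}'s formula to $|\u(t)|^2$, using Lemma~\ref{Bprop1}(iv), and taking expectation after invoking Lemma~\ref{le1} on $|g^n|^2$, the It\^{o} correction of the noise equals $\e|\sigma_n(t,\u(t))|^2_{\mathrm{L}_Q}+\int_Z|g^n(\u(t),z)|^2\lambda(dz)$, which by (A.2) is at most $K(1+\|\u(t)\|^2)$. Estimating $2(f(t),\u(t))\le\delta\|\u(t)\|^2+\delta^{-1}\|f(t)\|^2_{V'}$ with $\delta$ small, the $\|\u\|^2$-terms on the right are absorbed by the dissipation $2\nu\|\u(t)\|^2$ as soon as $K<2\nu$, and integration in $t$ (no Gronwall is needed here) yields uniform bounds on $\mathbb{E}|\u(t)|^2$ and on $\mathbb{E}\int_0^T\|\u(t)\|^2\d t$. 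For the pathwise-supremum bound one repeats the proof of \eqref{energy2}: taking the supremum before the expectation in the integrated identity, the Burkholder--Davis--Gundy estimates of the two stochastic integrals together with (A.2) and Young's inequality produce a contribution of the form $\frac14\mathbb{E}\sup_{0\le t\le T}|\u(t)|^2+C\mathbb{E}\int_0^T(1+\|\u(t)\|^2)\d t$; the first summand is absorbed on the left and, crucially, the second is already controlled by the $L^2(0,T;V)$-bound just obtained, so again no Gronwall argument is required. The exponentially weighted estimates of Theorem~\ref{energya} are handled identically, and letting $n\to\infty$ along a subsequence transfers all the bounds to $\ue$.

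Next I would re-establish the local monotonicity of Lemma~\ref{Mon} from (A.3): adding the bound $\e|\sigma(t,u)-\sigma(t,v)|^2_{\mathrm{L}_Q}+\int_Z|g(u,z)-g(v,z)|^2\lambda(dz)\le L\|w\|^2$ to \eqref{monotone2} and subtracting $\frac{r^4}{\nu^3}|w|^2$ gives $(F(u)-F(v),w)-\frac{r^4}{\nu^3}|w|^2+\e|\sigma(t,u)-\sigma(t,v)|^2_{\mathrm{L}_Q}+\int_Z|g(u,z)-g(v,z)|^2\lambda(dz)\le(L-\frac{\nu}{2})\|w\|^2\le0$, provided $L\le\nu/2$. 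With this version of Lemma~\ref{Mon} available, the existence argument of Theorem~\ref{existence} runs unchanged: the Galerkin limits as in \eqref{16}, the weighted It\^{o} expansion of $2e^{-r(t)}|\u(t)|^2$ with $r(t)=\frac{2}{\nu^3}\int_0^t\|\ve(s)\|_{\mathrm{L}^4}^4\d s$, the lower semicontinuity of the weak convergence, and the Minty-type substitution $\ve=\ue-\mu\we$ followed by $\mu\to0$, together identify $F^{\e}_0=F(\ue)$, $S^{\e}=\sigma(\cdot,\ue)$ and $G^{\e}=g(\ue,\cdot)$. For uniqueness, if $\ve$ is a second solution the difference $\we=\ue-\ve$ satisfies the corresponding linear equation; applying It\^{o} to $e^{-r(t)}|\we(t)|^2$, using \eqref{monotone2} to dominate the nonlinear part by $-\nu\|\we\|^2$ and (A.3) to bound the noise difference by $L\|\we\|^2$, one obtains $\mathbb{E}[e^{-r(t)}|\we(t)|^2]+(\nu-L)\mathbb{E}\int_0^t e^{-r(s)}\|\we(s)\|^2\d s\le\mathbb{E}|\we(0)|^2=0$, and since $L\le\nu/2<\nu$ this forces $\we\equiv0$.

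The one genuinely delicate point is the bookkeeping of the smallness conditions: the viscous dissipation must dominate the $V$-norm growth and Lipschitz constants of the noise, which is exactly what forces $K<2\nu$ and $L\le\nu/2$. It is here that the restriction ``$\e$ small'' enters, since it shrinks the $\sigma$-contribution in (A.2)--(A.3); the $g$-contribution, which carries no $\e$, must instead be assumed small outright, so the dissipation can no longer be recovered from the diffusion coefficient alone as in Theorem~\ref{existence}. A secondary, purely technical, modification is that in the pathwise-supremum estimate one cannot close a Gronwall inequality in the quantity $\sup_{s\le t}|\u(s)|^2$, because Burkholder--Davis--Gundy now introduces $\int_0^T\|\u(t)\|^2\d t$ rather than $\int_0^T\sup_{s\le t}|\u(s)|^2\d t$; this is bypassed by proving the weaker $L^2(0,T;V)$-bound first and feeding it back. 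All remaining computations are verbatim copies of those in Theorems~\ref{energy}--\ref{existence}.
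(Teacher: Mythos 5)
Your proposal is correct and follows the route the paper intends (the paper offers no proof of this corollary beyond the phrase ``can be proved similarly'', i.e.\ a rerun of Theorems \ref{energy}, \ref{energya} and \ref{existence} with Lemma \ref{Mon} adapted to (A.2)--(A.3)), but you deviate from the paper's arguments in two places, and both deviations are genuinely necessary rather than cosmetic. First, because the growth and Lipschitz bounds are now in the $V$-norm, the Gronwall closures used in \eqref{energy1}--\eqref{energy2} and in Theorem \ref{energya} are no longer available; your replacement --- absorb the $K\mathbb{E}\int_0^T\|u^{n,\varepsilon}(t)\|^2\,\d t$ contribution into the dissipation $2\nu\mathbb{E}\int_0^T\|u^{n,\varepsilon}(t)\|^2\,\d t$, then feed the resulting $\mathrm{L}^2(0,T;V)$ bound back into the Burkholder--Davis--Gundy step for the supremum estimate --- is the right fix, and it is exactly where the corollary silently departs from the scheme of Theorem \ref{energy}. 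Second, you make explicit the smallness conditions this forces ($K$ strictly below a multiple of $\nu$ for the a priori bounds, $L\leq\nu/2$ for the analogue of \eqref{monotone}, $L<\nu$ for uniqueness), which play the role that $0<\varepsilon<\nu/(2L)$ played in Lemma \ref{Mon} and Theorem \ref{existence}; the corollary as printed does not state them, and since the jump coefficient $g$ carries no factor $\varepsilon$ they cannot be recovered by shrinking $\varepsilon$, so your observation that they must be assumed outright fills a real gap in the statement. With these conditions in place, your Minty--Browder identification of $F_0^{\varepsilon}$, $S^{\varepsilon}$, $G^{\varepsilon}$ and the weighted uniqueness argument are verbatim the paper's, and the proof is complete.
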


\begin{corollary}
Sabra shell model of turbulence is the other well accepted model in the literature, and the fundamental difference with the GOY model lies in the number of complex conjugation operators used in the nonlinear terms which are
responsible for differences in the phase symmetries of the two
models, and as a consequence, Sabra shell  model exhibits
shorter-ranged correlations than the GOY model (see L'vov et. al.
\cite{Lv}). The equations of motion of the stochastic Sabra shell model have the following form
\begin{align*}
\frac{\d u_n}{\d t} + \nu k_n^2 u_n &+ i\big(a k_{n+1} u_{n+2}u\s_{n+1} + b k_n u_{n+1}u\s_{n-1} - \nonumber\\ & -ck_{n-1} u_{n-1}u_{n-2}\big)
= f_n, \quad\text{for}\ n= 1, 2, \ldots,
\end{align*}
along with the boundary conditions
\begin{equation*}
u_{-1} = u_0 = 0.
\end{equation*}
One can deduce from the above equation in the continuous setting with L\'{e}vy noise as
\begin{align*}
&\d \ue + \big[\nu A\ue + B(\ue, \ue)\big] \d t = f(t) \d t + \sqrt{\e}\sigma(t, \ue) \d W(t) + \e\int_Zg(\ue,z)\tilde{N}(dt,dz)\\
& \ue(0) = u_0,\nonumber
\end{align*}
Under the hypothesis \ref{hyp}, and under the same functional
setting, the existence and uniqueness of the strong solution can be
established in $\mathrm{L}^2(\Omega; \mathcal{D}(0, T;
H)\cap\mathrm{L}^2(0, T; V))$.
\end{corollary}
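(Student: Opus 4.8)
The plan is to exploit the fact that the entire development for the GOY model is abstract: once the Sabra nonlinearity is recast as a bilinear operator $B$ enjoying the estimates of Lemma~\ref{Bprop1} and the identity of Lemma~\ref{Bprop2}, the local monotonicity of Lemma~\ref{Mon}, the a priori bounds of Theorems~\ref{energy} and~\ref{energya}, and the existence/uniqueness argument of Theorem~\ref{existence} carry over without any change under Hypothesis~\ref{hyp} and in the same functional setting.

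First I would introduce the bilinear operator $B(u,v)=(B_1(u,v),B_2(u,v),\dots)$ obtained by symmetrizing the Sabra nonlinear term, in exact analogy with the passage from \eqref{goy} to \eqref{B}: using the energy-conservation relation $a+b+c=0$ among the Sabra parameters, each component $B_n(u,v)$ is $k_n$ times a symmetric bilinear expression in the coordinates with indices in $\{n-2,n-1,n,n+1,n+2\}$, normalized so that $B(u,u)$ reproduces the Sabra nonlinearity. Bilinearity is then automatic, so Lemma~\ref{Bprop2} holds verbatim: $B(u,u)-B(v,v)=B(v,w)+B(w,v)+B(w,w)$ with $w=u-v$.

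The main step is to re-establish Lemma~\ref{Bprop1} for this $B$. The continuity bounds $|B(u,v)|\le C_1\|u\|\,|v|$, $|B(u,v)|\le C_2|u|\,\|v\|$, $\|B(u,v)\|_{V'}\le C_3|u|\,|v|$ and $\|B(u,v)\|_V\le C_4|u|\,|Av|$ follow exactly as in the GOY case — each component is $k_n$ times a product of nearby coordinates and the geometric growth $k_n=k_02^n$ is exploited identically — and are in fact established for the Sabra model in Constantin, Levant and Titi~\cite{CLT}. The delicate point is the cancellation $(B(u,v),v)=0$ for $u\in V$, $v\in H$, i.e. $\mathrm{Re}\sum_n B_n(u,v)v\s_n=0$: this is the Sabra counterpart of the GOY orthogonality in Lemma~\ref{Bprop1}, it encodes conservation of the energy $|u|^2$ for the Sabra dynamics, and it is the one place where the different pattern of complex conjugations distinguishing the Sabra and GOY nonlinearities actually intervenes. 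Granting it, Lemma~\ref{Mon} is reproduced line by line: from $(B(u,u)-B(v,v),w)=(B(w,v),w)$ and \eqref{L4} one gets $|(B(w,v),w)|\le C\|v\|_{l^4}\|w\|_{l^4}\|w\|\le\frac{\nu}{2}\|w\|^2+\frac{27}{32\nu^3}|w|^2\|v\|_{l^4}^4$, whence $(F,\sqrt{\e}\sigma+\e\int_Zg(\cdot,z)\lambda(dz))$ is monotone in $\mathbb{B}_r$ for $0<\e<\frac{\nu}{2L}$.

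With Lemmas~\ref{Bprop1},~\ref{Bprop2} and~\ref{Mon} in hand, the rest is a transcription of the GOY proof. The Galerkin equations \eqref{variational} written with the Sabra $B$, together with It\^{o}'s formula applied to $|\u(t)|^2$ and to $e^{-\delta t}|\u(t)|^2$, yield the a priori bounds of Theorems~\ref{energy} and~\ref{energya}, whose derivations use only Lemma~\ref{Bprop1} and Hypothesis~\ref{hyp}; Banach--Alaoglu then supplies the weak and weak-$\ast$ convergent subsequences as in \eqref{16}; the limit $\ue$ carries the stated It\^{o} differential with limit coefficients $F_0^\e,S^\e,G^\e$; and the monotonicity trick with the exponential weight $r(t)=\frac{2}{\nu^3}\int_0^t\|\ve(s)\|_{l^4}^4\,\d s$ identifies $F_0^\e(t)=F(\ue(t))$, $S^\e(t)=\sigma(t,\ue(t))$ and $G^\e=g(\ue,\cdot)$, which proves existence. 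Uniqueness follows from \eqref{monotone2} and Hypothesis~\ref{hyp}~(H.3) exactly as in Part~II of the proof of Theorem~\ref{existence}, yielding $\mathbb{E}[e^{-r(t)}|\we(t)|^2]\le\mathbb{E}|w(0)|^2$ for $\we=\ue-\ve$ and hence $\we\equiv0$. The only genuine obstacle is the verification of the orthogonality relation for the Sabra nonlinear term; everything downstream is mechanical.
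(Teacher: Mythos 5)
Your proposal is correct and follows essentially the same route as the paper: the corollary is intended to follow by verifying that the Sabra bilinear operator satisfies the estimates and the orthogonality $(B(u,v),v)=0$ of Lemma~\ref{Bprop1} (which, as the paper itself notes, are proved for the Sabra model in Constantin, Levant and Titi~\cite{CLT}), after which Lemmas~\ref{Bprop2} and~\ref{Mon}, the energy estimates, and the monotonicity/Galerkin argument of Theorem~\ref{existence} transfer verbatim. The one step you single out as the genuine obstacle — the cancellation property for the Sabra nonlinearity — is exactly the point covered by the cited reference, so your outline matches the paper's intended argument.
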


\medskip\noindent
{\bf Acknowledgements:} The second author would like to thank
Council of Scientific and Industrial Research(CSIR) for a Junior
Research Fellowship.

\end{document}